\definecolor{bpdgray}{RGB}{240,240,240}
\definecolor{BPDblue}{RGB}{0,102,204}
\definecolor{coBPDgreen}{RGB}{34,139,34}
\definecolor{pipepurple}{RGB}{192,57,43}
\tikzset{
  bpdcell/.style={draw=black, line width=0.8pt},
  bpdline/.style={line width=1.8pt, rounded corners},
}
\newcommand{\BPDStart}[2]{%
  \begin{scope}[x=2em,y=2em, yscale=-1, shift={(0,#1)}]
}
\newcommand{\BPDEnd}{%
  \end{scope}
}
\newcommand{\BPDPlace}[3]{%
  \begin{scope}[shift={(#2-1,#1-1)}]
    #3
  \end{scope}%
}
\newcommand{\blank}{%
  \draw[bpdcell] (0,0) rectangle (1,1);
}
\newcommand{\+}{%
  \draw[bpdcell] (0,0) rectangle (1,1);
  \draw[BPDblue,bpdline] (0.5,0) -- (0.5,1);  
  \draw[BPDblue,bpdline] (0,0.5) -- (1,0.5);  
}
\newcommand{\hor}{%
  \draw[bpdcell] (0,0) rectangle (1,1);
  \draw[BPDblue,bpdline] (0,0.5) -- (1,0.5);
}
\newcommand{\ver}{%
  \draw[bpdcell] (0,0) rectangle (1,1);
  \draw[BPDblue,bpdline] (0.5,0) -- (0.5,1);
}
\newcommand{\NW}{
  \draw[bpdcell] (0,0) rectangle (1,1);
  \draw[BPDblue,bpdline] (0.5,0) -- (0.5,0.5) -- (0,0.5);
}
\newcommand{\SE}{
  \draw[bpdcell] (0,0) rectangle (1,1);
  \draw[BPDblue,bpdline] (0.5,1) -- (0.5,0.5) -- (1,0.5);
}
\tikzset{
  pipecell/.style={draw=black, line width=0.8pt},
  pipeline/.style={line width=1.8pt, rounded corners, color=pipepurple},
  pipelineghost/.style={pipeline, opacity=0.35}
}
\newcommand{\PipeStart}[1]{%
  \begin{scope}[x=2em,y=2em,yscale=-1,shift={(0,#1)}]
}
\newcommand{\PipeEnd}{%
  \end{scope}
}
\newcommand{\PipePlace}[3]{%
  \begin{scope}[shift={(#2-1,#1-1)}]
    #3
  \end{scope}
}
\newcommand{\pipeElbow}{%
  \draw[pipecell] (0,0) rectangle (1,1);
  \draw[pipeline] (0.5,0) .. controls (0.5,0.35) and (0.35,0.5) .. (0,0.5);
  \draw[pipelineghost] (1,0.5) .. controls (0.65,0.5) and (0.5,0.65) .. (0.5,1);
}
\newcommand{\pipeCross}{%
  \draw[pipecell] (0,0) rectangle (1,1);
  \draw[pipeline] (0.5,0) -- (0.5,1);
  \draw[pipeline] (0,0.5) -- (1,0.5);
}
\newcommand{\pipeBump}{%
    \draw[pipecell] (0,0) rectangle (1,1);
    \draw[pipeline] (0.5,0) .. controls (0.5,0.32) and (0.32,0.5) .. (0,0.5);
    \draw[pipeline] (0.5,1) .. controls (0.5,0.68) and (0.68,0.5) .. (1,0.5);
}
\newcommand{\pipeEmpty}{%
    \draw[pipecell] (0,0) rectangle (1,1);
}
\newcommand{\PipeLabels}[2]{%
  \begin{scope}
    \foreach[count=\i] \lab in {#1} {
      \node[font=\small, anchor=south, yshift=1pt] at (\i-0.5, 0) {\lab};
    }
    \foreach[count=\i] \lab in {#2} {
      \node[font=\small, anchor=east, xshift=-1pt] at (0, \i-0.5) {\lab};
    }
  \end{scope}
}
\newcommand{\pipetilebaseline}{0.2ex} 
\newcommand{\PipeTileInlineRot}[2][0]{%
  \tikz[baseline=\pipetilebaseline, x=1em, y=1em]{%
    \begin{scope}[shift={(.5,.5)}, rotate=#1, shift={(-.5,-.5)}]
      #2%
    \end{scope}%
  }%
}
\newcommand{\bumpangle}{90}  
\newcommand{\crosstile}{\PipeTileInlineRot[0]{\pipeCross}}
\newcommand{\bl}{\PipeTileInlineRot[0]{\pipeEmpty}}
\newcommand{\bt}{\PipeTileInlineRot[\bumpangle]{\pipeBump}}
\newcommand{\bpdblank}{\PipeTileInlineRot[0]{\blank}}
\newcommand{\bpdplus}{\PipeTileInlineRot[0]{\+}}
\newcommand{\bpdh}{\PipeTileInlineRot[0]{\hor}}
\newcommand{\bpdv}{\PipeTileInlineRot[0]{\ver}}
\newcommand{\bpdNW}{\PipeTileInlineRot[0]{\NW}}
\newcommand{\bpdSE}{\PipeTileInlineRot[0]{\SE}}
\providecommand{\bA}{\mathbb{A}}
\providecommand{\kk}{\mathbb{F}}
\providecommand{\cL}{\mathcal{L}}
\newcommand{\Xnk}{X_{n,k}}
\newcommand{\Rnk}{R_{n,k}}
\newcommand{\Pnk}{(\PP^{k-1})^n}
\newcommand{\Words}{[k]^n}
\newcommand{\Fubini}{\operatorname{Fubini}(n,k)}
\newcommand{\std}{\mathrm{std}}
\newcommand{\conv}{\mathrm{conv}}
\newcommand{\sinv}{\sigma^{-1}(w)}
\newcommand{\Mat}{\operatorname{Mat}_{n,k}}
\newcommand{\Nonzerocol}{\operatorname{Mat}^\circ_{n,k}}
\newcommand{\FR}{\operatorname{FR}_{n,k}}
\newcommand{\PM}{\operatorname{PM}}
\newtheorem{question}[theorem]{Question}
\crefname{axiom}{Axiom}{Axioms}
\Crefname{axiom}{Axiom}{Axioms}
\crefname{question}{Question}{Questions}
\Crefname{question}{Question}{Questions}
\crefname{conjecture}{Conjecture}{Conjectures}
\Crefname{conjecture}{Conjecture}{Conjectures}
\title{The Grothendieck Group of \\ the Variety of Spanning Line Configurations}
\author{Michael Ruofan Zeng}
\date{}
\begin{document}

\maketitle

\begin{abstract}
We study the Grothendieck group of the variety $X_{n,k}$ of spanning line configurations introduced by Pawlowski--Rhoades \cite{PawlowskiRhoades2019} as a geometric model for the generalized coinvariant algebra $R_{n,k}$.  Our first result is a localization statement in $K$-theory for the complements of cell closures in smooth cellular varieties. Combining with the Fulton--Lascoux degeneracy loci formula, we prove that $K_0(X_{n,k})$ is canonically isomorphic to $R_{n,k}$, extending classical isomorphisms for the flag variety. We next identify the classes of the Pawlowski--Rhoades varieties in $K_0((\PP^{k-1})^n)$ with Grothendieck polynomials associated to words $w \in [k]^n$.  Motivated by this identification, we develop models of classical and bumpless pipe dreams for general words, and we show that Schubert and Grothendieck polynomials of words are monomial-weight generating functions for these pipe dreams, extending the classical story from permutations to Fubini words.
\end{abstract}

\tableofcontents

\section{Introduction}
\label{sec:intro}

The \textit{coinvariant algebra} $R_n$ is the quotient ring
\[
R_n :=  \frac{\ZZ[x_1,x_2, \dots, x_n]}{(e_1, e_2, \dots, e_n)},
\]
where $e_i$ is the elementary symmetric polynomial of degree $i$. It has been a central object in commutative algebra, invariant theory, and algebraic combinatorics. In particular, it is a classical result of Borel that the integral cohomology ring of the flag variety $Fl_n(\CC)$ has the \textit{Borel presentation} \cite{Borel1953SurLC}
\begin{equation}\label{eq:Borel-presentation}
    H^\bullet(Fl_n(\CC);\ZZ) \cong R_n .
\end{equation}
The geometry of the complex algebraic variety $Fl_n(\CC)$, or more generally $Fl_n$ taken over any field $\kk$, is heavily intertwined with the algebra and combinatorics arising from $R_n$.

Since Borel's work, many other topological invariants of the flag variety have been found to be isomorphic to $R_n$, such as the Chow ring $CH^\bullet(Fl_n)$ of algebraic cycles and the Grothendieck group $K_0(Fl_n)$ of algebraic vector bundles on $Fl_n$ \cite{textchapter,Karoubi1978,LENART2005120}. Since $Fl_n$ has a cell decomposition into Schubert cells, the cycle class map gives an isomorphism $CH^\bullet(Fl_n) \cong H^\bullet(Fl_n(\CC);\ZZ)$. Demazure \cite{demazure1973invariants} showed that there is an isomorphism $K_0(Fl_n) \cong R_n$. Hence, we have a series of isomorphisms of ungraded rings 
\begin{equation}\label{eqn:isos-of-Fln}
    K_0(Fl_n) \cong CH^\bullet(Fl_n)\cong H^\bullet(Fl_n(\CC);\ZZ) \cong R_n.
\end{equation}
Special families of polynomials are present in these rings. \textit{Grothendieck polynomials} represent $K$-theoretic \textit{Schubert classes} and form an additive basis for $K_0(Fl_n)$. \textit{Schubert polynomials} represent the Schubert classes as an additive basis for the Chow ring and cohomology. Both families of polynomials have natural interpretations in terms of a combinatorial gadget called \textit{pipe dreams}. The study of these polynomials has been a fertile ground for results interlacing algebraic geometry and combinatorics; see, e.g., \cite{lascoux1982polynomes,BJS93,Lenart2000,LENART2005120,MONICAL_2020,MR4681291}.

A natural variation of the coinvariant algebra in the theory of symmetric functions, first studied by Haglund-Rhoades-Shimozono \cite{HaglundRhoadesShimozono2018}, is the \textit{generalized coinvariant algebra}
\begin{equation}
    \label{eqn:generalized-coinvariant-algebra}
    R_{n,k}:= \frac{\ZZ[x_1, x_2, \dots, x_n]}{(x_1^k, x_2^k, \dots, x_n^k, e_{n-k+1}, e_{n-k+2}, \dots, e_{n})},
\end{equation} 
where $1\le k\leq n$ are integers.  In analogy with $R_n$ and $Fl_n$, it is natural to ask whether there is a variety whose cohomology ring is isomorphic to $R_{n,k}$.  Pawlowski-Rhoades \cite{PawlowskiRhoades2019} answered this question by constructing such a variety.

Throughout this paper, let $\kk$ be a field of characteristic not equal to $2$.  Following \cite{PawlowskiRhoades2019}, a line configuration $\ell_\bullet$ of length $n$ in $\kk^k$ is an ordered $n$-tuple $(\ell_1, \ell_2, \dots, \ell_n)$, where each $\ell_i$ is a one-dimensional linear subspace of $\kk^k$, called a line. A \textit{spanning line configuration} is then a line configuration for which the linear span $\ell_1 + \ell_2 + \cdots + \ell_n$ equals all of $\kk^k$.

\begin{definition}
    The space of \textit{spanning line configurations} $X_{n,k}$ is $$X_{n,k} := \left\{\ell_\bullet :=(\ell_1, \dots, \ell_n)\mid \ell_1 + \cdots + \ell_n = \kk^k\right\}.$$
\end{definition}

The space $\Xnk$ has many interesting properties. It can be identified as an open subset of the product of projective spaces $(\PP^{k-1})^n$ by sending each line $\ell_i$ in $\ell_\bullet$ to the corresponding point in $\PP^{k-1}$. In addition, $\Xnk$ is \textit{cellular} with cells indexed by \textit{Fubini words}. Let $[k]:=\{1,2,\dots, k\}$. Then, a Fubini word  is $w = w_1w_2\dots w_n \in [k]^n$ such that each letter in $[k]$ appears at least once. The cells $C_w\subseteq \Xnk$ are called \textit{Pawlowski-Rhoades (PR) cells} and their closures $X_w :=\overline{C}_w$ are called \textit{Pawlowski-Rhoades (PR) varieties}. They are closely connected to the combinatorics of Fubini words \cite{PawlowskiRhoades2019, BilleyRyan2024}. The variety $\Xnk$ can be expressed as the open complement in $\Pnk$ of closures of cells corresponding to non-Fubini words. Pawlowski-Rhoades used this decomposition and Fulton's degeneracy loci formula for Schubert polynomials (\Cref{thm:Schubert-degeneracy-loci-formula}) to prove that there is a canonical isomorphism \begin{equation}
    H^\bullet(X_{n,k}(\CC);\ZZ)\cong \Rnk.
\end{equation} 
Thus $X_{n,k}$ provides a geometric incarnation of the generalized coinvariant algebra, just as $Fl_n$ does for $R_n$. See \Cref{sec:Xnk-cohomology} for a detailed description.

The first goals of this paper are to prove a complete analogue of the $Fl_n$ isomorphisms \eqref{eqn:isos-of-Fln} for the variety $X_{n,k}$ and to identify a natural analogue of the Schubert basis for $K_0(\Xnk)$. To do that, we establish the $K$-theoretic analogue of a key result for the cohomology of the open complement of cell closures which played an essential role in Pawlowski-Rhoades' proofs \cite[Theorem 4.5]{PawlowskiRhoades2019}. 

\begin{theorem}\label{thm:K0(X-Z)=K0(X)/I(Z)}
 Suppose $X$ is a smooth variety over $\kk$ that admits a cellular decomposition. Let $Z \subseteq  X$ be a union of cell closures. Let $I(Z)$ be the ideal in $K_0(X)$ generated by the fundamental classes of cell closures $[\cO_{\overline{C}}]$ for all cells $C \subseteq  Z$. The inclusion $Z \hookrightarrow X$ induces an isomorphism of rings \begin{equation} \label{eqn:K0(X-Z)=K0(X)/I(Z)}
     K_0(X-Z) \cong K_0(X)/I(Z). 
     \end{equation}
\end{theorem}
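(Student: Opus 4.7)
The plan is to apply the localization exact sequence in Quillen's $G$-theory. For the closed inclusion $i\colon Z \hookrightarrow X$ with open complement $j\colon U := X-Z \hookrightarrow X$, this yields a right-exact sequence
\[
G_0(Z) \xrightarrow{i_*} G_0(X) \xrightarrow{j^*} G_0(U) \to 0.
\]
Since $X$ is smooth and $U$ is open in $X$, the canonical maps $K_0(X) \to G_0(X)$ and $K_0(U) \to G_0(U)$ are isomorphisms, so this becomes a surjection $j^*\colon K_0(X) \twoheadrightarrow K_0(U)$ whose kernel equals $\mathrm{im}(i_*)$. Because $j^*$ is a ring homomorphism, its kernel is an ideal of $K_0(X)$; and since each generator $[\cO_{\overline{C}}]$ of $I(Z)$ (for $C \subseteq Z$) has support in the closed set $Z$ and hence pulls back to zero on $U$, we obtain $I(Z) \subseteq \ker(j^*) = \mathrm{im}(i_*)$.

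The crux of the argument is the reverse inclusion $\mathrm{im}(i_*) \subseteq I(Z)$, for which it suffices to show that $G_0(Z)$ is generated as an abelian group by the classes $[\cO_{\overline{C}}]$ for cells $C \subseteq Z$; applying $i_*$ then places the image inside $I(Z)$. Because $Z$ is a union of cell closures, the cellular decomposition of $X$ restricts to a cellular decomposition of $Z$. I would filter $Z$ by closed subsets $\varnothing = Z_{-1} \subseteq Z_0 \subseteq \cdots \subseteq Z_d = Z$, where $Z_i$ is the union of all cell closures of dimension $\le i$ contained in $Z$, so that each locally closed stratum $Z_i - Z_{i-1}$ is a disjoint union of affine cells $C \cong \bA^i$. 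I would then induct on $i$ using the localization sequence
\[
G_0(Z_{i-1}) \to G_0(Z_i) \to G_0(Z_i - Z_{i-1}) \to 0.
\]
By $\bA^1$-homotopy invariance of $G$-theory, $G_0(Z_i - Z_{i-1}) = \bigoplus_{C \subseteq Z,\, \dim C = i} \ZZ\,[\cO_C]$, and each $[\cO_C]$ lifts to $[\cO_{\overline{C}}] \in G_0(Z_i)$ since $\cO_{\overline{C}}$ restricts to $\cO_C$ on the open stratum. Combined with the inductive generating set for $G_0(Z_{i-1})$, this produces the desired generating set for $G_0(Z_i)$; taking $i = d$ completes the induction.

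The step I expect to require the most care is setting up the cellular filtration and verifying that the hypotheses of Quillen localization are cleanly met: that each $Z_i$ is closed in $Z$, that the strata $Z_i - Z_{i-1}$ are genuine disjoint unions of affine spaces (so that $G_0$ splits as a direct sum over cells), and that the appropriate form of $\bA^1$-homotopy invariance applies to give $G_0(\bA^i) \cong \ZZ$. Once these foundations are in place, the conclusion is formal: $\mathrm{im}(i_*)$ is the subgroup generated by the classes $[\cO_{\overline{C}}]$, hence contained in $I(Z)$, and combined with the reverse inclusion above yields $K_0(U) \cong K_0(X)/I(Z)$, induced by the restriction map as claimed.
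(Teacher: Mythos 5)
Your proposal is correct and follows essentially the same route as the paper: identify $K_0$ with $G_0$ via smoothness, apply the $G$-theory localization sequence, and show that $G_0(Z)$ is generated by the classes $[\cO_{\overline{C}}]$ by a cellular induction using localization and homotopy invariance (the paper packages this last step as a separate freeness theorem for $G_0$ of cellular varieties). In fact your version is slightly leaner, since you only need generation of $G_0(Z)$ rather than freeness, and so you avoid the Riemann--Roch rank argument the paper invokes to get injectivity of $i_*$, which is not actually needed for this statement.
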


We were unable to locate a direct reference for this statement, so we give a proof of the above \Cref{thm:K0(X-Z)=K0(X)/I(Z)} in \Cref{subsec:key-lemmas-for-cellular}. By \eqref{eqn:K0(X-Z)=K0(X)/I(Z)}, it suffices to identify the classes of PR varieties $X_w$ where $w \in [k]^n$ is non-Fubini. Using the Fulton-Lascoux Degeneracy Loci Formula (\Cref{thm:Grothendieck-degeneracy-loci-formula}), we can express these classes in $K_0(\Pnk)$ in the form of Grothendieck polynomials. See \Cref{thm:grothendieck-poly-of-word} for details. Combining \Cref{thm:K0(X-Z)=K0(X)/I(Z)} and \Cref{thm:grothendieck-poly-of-word}, we can prove the main theorem of this paper. 

\begin{theorem} \label{thm:main}

Let $\Rnk$ be the generalized coinvariant algebra defined as above. There are isomorphisms of rings 
\begin{equation}
    K_0(X_{n,k}) \cong CH^\bullet(\Xnk) \cong H^\bullet(\Xnk(\CC);\ZZ) \cong \Rnk
\end{equation}
  over the integers. In particular, the identification $K_0(\Xnk)\cong \Rnk$ is canonical in terms of $K$-theoretic Chern classes of tautological line bundles.
\end{theorem}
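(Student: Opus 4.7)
The plan is to split the chain into its three constituent isomorphisms. For $CH^\bullet(\Xnk)\cong H^\bullet(\Xnk(\CC);\ZZ)$, invoke the standard fact that the cycle class map is an isomorphism for smooth complex cellular varieties. The isomorphism $H^\bullet(\Xnk(\CC);\ZZ)\cong\Rnk$ is the cohomological theorem of Pawlowski--Rhoades, already available. All the new content is concentrated in the K-theoretic statement $K_0(\Xnk)\cong\Rnk$, which I would establish by constructing an explicit ring map $\Rnk\to K_0(\Xnk)$ and comparing ranks.

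Define $\phi\colon\ZZ[x_1,\dots,x_n]\to K_0(\Xnk)$ by $\phi(x_i):=1-[\ell_i]$, where $\ell_i$ denotes the $i$-th tautological line bundle on $\Xnk$. The nilpotence $\phi(x_i^k)=(1-[\ell_i])^k=0$ is inherited via pullback from the standard relation on $K_0(\PP^{k-1})$. To see that $\phi(e_j)=0$ for $j\geq n-k+1$, use the spanning short exact sequence
\[
0\longrightarrow K\longrightarrow\bigoplus_{i=1}^n\ell_i\longrightarrow\cO^k\longrightarrow 0
\]
on $\Xnk$, in which $\operatorname{rk} K=n-k$. Applying total $\lambda$-classes and the change of variable $u=t/(1+t)$ yields
\[
\prod_{i=1}^n\bigl(1+t(1-[\ell_i])\bigr)\;=\;(1+t)^{n-k}\,\lambda_{-u}(K),
\]
whose right-hand side is visibly a polynomial in $t$ of total degree at most $n-k$. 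The coefficient of $t^j$ on the left is $\phi(e_j)$, which therefore vanishes for $j\geq n-k+1$. Hence $\phi$ descends to a ring map $\bar\phi\colon\Rnk\to K_0(\Xnk)$, and by construction this is the identification "canonical in K-theoretic Chern classes of tautological line bundles" claimed in the theorem.

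To show $\bar\phi$ is an isomorphism I argue by ranks. Surjectivity is automatic: the open immersion $\Xnk\hookrightarrow\Pnk$ induces a surjection $K_0(\Pnk)\twoheadrightarrow K_0(\Xnk)$ by \Cref{thm:K0(X-Z)=K0(X)/I(Z)}, and $K_0(\Pnk)$ is generated as a ring by the classes $1-[\ell_i]$ which lie in the image of $\bar\phi$. Both source and target are free abelian groups of rank $|\Fubini|$: for $\Rnk$ this follows from the Pawlowski--Rhoades cohomology isomorphism, and for $K_0(\Xnk)$ it follows by combining cellularity of $\Xnk$ with \Cref{thm:K0(X-Z)=K0(X)/I(Z)} applied iteratively along the dimension filtration by cell closures. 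A surjection of free $\ZZ$-modules of equal finite rank is forced to be an isomorphism, which finishes the proof.

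The main technical obstacle I anticipate is precisely the K-theoretic vanishing $\phi(e_j)=0$ for $j\geq n-k+1$: the cohomological analog is a one-line Chern class calculation, whereas the K-theoretic version requires the careful $\lambda$-ring manipulation with the substitution $u=t/(1+t)$ above. An alternative, more Grothendieck-polynomial-centric route would instead identify $K_0(\Xnk)$ directly as the quotient $\ZZ[x]/(x_i^k,\GG_w:w\text{ non-Fubini})$ using \Cref{thm:grothendieck-poly-of-word}, and then prove the ideal equality $(\GG_w:w\text{ non-Fubini})=(e_{n-k+1},\dots,e_n)$ modulo $(x_1^k,\dots,x_n^k)$. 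That route makes the Fulton--Lascoux formula a direct input but must contend with the higher-degree corrections $\GG_w-\mathfrak{S}_w$ to the Pawlowski--Rhoades Schubert relations, which is why I prefer the direct $\lambda$-computation above.
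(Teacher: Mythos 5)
Your proof is correct, and its heart --- the $K$-theoretic vanishing of $e_{n-k+1},\dots,e_n$ --- is established by a genuinely different mechanism than the paper's. The paper never touches the spanning exact sequence in $K$-theory; instead it applies \Cref{thm:K0(X-Z)=K0(X)/I(Z)} to realize $K_0(\Xnk)$ as $K_0(\Pnk)$ modulo the ideal generated by the classes $[\cO_{X_w}]$ for non-Fubini $w$, identifies those classes with Grothendieck polynomials via the Fulton--Lascoux formula (\Cref{thm:grothendieck-poly-of-word}), and then shows that the Grothendieck polynomials of the special Grassmannian permutations $v^{(i)}$ are unitriangular combinations of $e_{n-k+1},\dots,e_n$ (\Cref{lem:G-of-vertical-strip}), so that the two ideals coincide. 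Your route replaces all of that with the $\lambda$-class identity $\prod_i\bigl(1+t(1-[\ell_i])\bigr)=(1+t)^{n-k}\lambda_{-t/(1+t)}(K)$ coming from $0\to K\to\bigoplus_i\ell_i\to\cO^{\oplus k}\to 0$; I checked the substitution and the degree bound, and they are right --- the right-hand side equals $\sum_{j=0}^{n-k}(-1)^j[\wedge^jK]\,t^j(1+t)^{n-k-j}$, a polynomial of degree at most $n-k$, which kills $\phi(e_j)$ for $j>n-k$. The closing steps (surjectivity from localization plus ring generation by the classes $1-[\ell_i]$, and the rank count $\#\Fubini$ on both sides) agree with the paper's. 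What your approach buys is self-containedness: it makes \Cref{thm:main} independent of \Cref{thm:grothendieck-poly-of-word} and of Lenart's expansion underlying \Cref{lem:G-of-vertical-strip}. What it gives up is the byproduct the paper cares about, namely the explicit identification of the kernel with the ideal of Grothendieck polynomials of non-Fubini words, which is what yields the Grothendieck basis of $K_0(\Xnk)$ in \Cref{cor:grothendieck-additive-basis-Xnk}; your argument would still need the paper's route to recover that corollary. One cosmetic point: the rank of $K_0(\Xnk)$ follows directly from \Cref{thm:G0-free-abelian-when-cellular} together with smoothness, with no need to iterate the localization theorem along the cell filtration.
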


As a result, $\Xnk$ joins the list of varieties whose Grothendieck group is abstractly isomorphic to the Chow ring as rings over the integers.  Other examples include projective spaces, Grassmannians, flag varieties, and wonderful varieties in the sense of De Concini-Procesi \cite{ConciniProcesi95Wonderful,LLPP24WonderfulVarieties}.  See \Cref{sec:open-problems} for a more detailed discussion.  In general, for a smooth projective variety, the rationalized Chern character gives an isomorphism $ch_\QQ: K_0(X)_\QQ \to CH^\bullet(X)_\QQ$. However, there need not exist any (ungraded) isomorphism between $K_0(X)$ and $CH^\bullet(X)$ over the integers, even in the case that $X$ is smooth, cellular, and projective. For example, the projective bundle $\PP(\cO_{\PP^2}\oplus\cO_{\PP^2}(2))$ over $\PP^2$ is cellular, but its Chow ring and $K_0$-ring are non-isomorphic over the integers. This motivates us to ask the following question.

\begin{question}\label{qs:when-are-K0-CH-iso}
    What are the necessary and sufficient conditions on a scheme $X$ such that there is an abstract ring isomorphism $K_0(X) \cong CH^\bullet(X)$ over the integers?
\end{question}

The classes of the PR varieties $[\cO_{X_w}]$ are interesting in their own right.  To state the results, we set up some notation \textit{a priori} and postpone the precise discussion until later sections. For a word $w\in[k]^n$, let $\mathfrak{G}_{\operatorname{std}(\conv(w))}$ be the Grothendieck polynomial of the \textit{standardized permutation} (\Cref{def:standardization}) of the \textit{convexification} of $w$ (\Cref{def:convexification}), and let $\sinv \in S_n$ be the \textit{associated permutation} (\Cref{def:convexification}).

\begin{definition}
   \label{def:grothendieck-poly-of-word}
    We define the \textit{Grothendieck polynomial} of a word $w\in [k]^n$ to be \begin{equation}
        \label{eqn:grothendieck-poly-of-word}
        \mathfrak{G}_w(x_1, \dots, x_n) := \sigma^{-1}(w)\cdot \mathfrak{G}_{\operatorname{std}(\conv(w))}(x_1, \dots, x_n)
    \end{equation}
    where $\sinv \in S_n$ acts by permuting the variables.
\end{definition}
Fix the quotient presentation of the Grothendieck group to be \begin{equation}
\label{eqn:K0(Pnk)} K_0(\Pnk) \cong S_{n,k}:=\mathbb{Z}[x_1, x_2, \dots, x_n]/(x_1^k, x_2^k, \dots, x_n^k).
\end{equation} Note that $R_{n,k}$ is a further quotient of $S_{n,k}$ by the ideal $(e_{n-k+1}, \dots, e_n)$. Then, the following theorem holds.

\begin{theorem}\label{thm:grothendieck-poly-of-word} For a word $w\in[k]^n$, the class $[\cO_{X_w}]$ in $K_0(\Pnk)$ maps to the Grothendieck polynomial $\mathfrak{G}_w(x_1, \dots, x_n)$
under the fixed isomorphism $K_0(\Pnk) \cong S_{n,k}$.
\end{theorem}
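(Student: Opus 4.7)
The plan is to present $[\cO_{X_w}]$ as a Fulton--Lascoux degeneracy locus class and reduce general words to permutation-type words via the factor-permutation symmetry of $\Pnk$.

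First, I would set up the universal line configuration on $\Pnk$: for each $i$ there is a tautological line subbundle $\cL_i \hookrightarrow \cO_{\Pnk}^{\oplus k}$ pulled back from the $i$th factor, and these assemble into a map of vector bundles
\[
  \phi\colon \bigoplus_{i=1}^n \cL_i \longrightarrow \cO_{\Pnk}^{\oplus k},\qquad (v_1,\dots,v_n)\longmapsto v_1+\cdots+v_n.
\]
Under the isomorphism $K_0(\Pnk) \cong S_{n,k}$ in \eqref{eqn:K0(Pnk)}, the $K$-theoretic Chern class $1-[\cL_i]$ corresponds to the variable $x_i$, while $\cO^{\oplus k}$ has trivial Chern roots. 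The rank behavior of the restrictions of $\phi$ to partial sums, read left-to-right in the positions $1,\dots,n$, records exactly which lines span which subspaces, i.e.\ the data used by Pawlowski--Rhoades to stratify $\Pnk$ by PR cells.

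Second, I would establish the identity in the permutation-type case, namely when $\sinv = e$ and $w = \std(\conv(w))$. Here the closure $X_w$ is cut out scheme-theoretically by rank bounds on the appropriate restrictions of $\phi$; these bounds are exactly the rank matrix of the permutation $w$, including the essential-set entries. Applying the Fulton--Lascoux degeneracy loci formula (\Cref{thm:Grothendieck-degeneracy-loci-formula}) to $\phi$ with source Chern roots $x_1,\dots,x_n$ then yields
\[
  [\cO_{X_w}] \;=\; \mathfrak{G}_w(x_1,\dots,x_n)\quad\text{in } S_{n,k}.
\]

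Third, for a general word $w\in[k]^n$, I would exploit the $S_n$-action on $\Pnk$ permuting the factors. By the very definition of $\conv$ and $\std$ (\Cref{def:convexification}, \Cref{def:standardization}), the permutation $\sinv$ records the rearrangement of positions taking $\std(\conv(w))$ back to $w$; geometrically, acting by $\sinv$ on $\Pnk$ carries $X_{\std(\conv(w))}$ isomorphically onto $X_w$, and the induced action on $K_0(\Pnk)\cong S_{n,k}$ is precisely the permutation of the generators $x_1,\dots,x_n$. Transporting the formula from the previous paragraph under this symmetry gives
\[
  [\cO_{X_w}] \;=\; \sinv\cdot \mathfrak{G}_{\std(\conv(w))}(x_1,\dots,x_n) \;=\; \mathfrak{G}_w(x_1,\dots,x_n),
\]
matching \Cref{def:grothendieck-poly-of-word}.

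The main obstacle will be the second step: establishing the scheme-theoretic (not merely set-theoretic) identification of $X_w$ with the Schubert-type degeneracy locus of $\phi$, so that the Fulton--Lascoux formula outputs $[\cO_{X_w}]$ rather than some other class supported on $X_w$. For classical matrix Schubert varieties this is standard from their Cohen--Macaulayness, and I expect the analogous statement for $X_{\std(\conv(w))}$ to follow either from a Gr\"obner degeneration argument or by transporting the classical result along the explicit affine presentation of $\Pnk$ (where each factor $\PP^{k-1}$ is covered by the open charts complementary to the coordinate hyperplanes). A subsidiary combinatorial check --- that the PR rank conditions on $\phi$ in the convexified case coincide with the rank matrix of the permutation $\std(\conv(w))$ --- should be a direct translation of Pawlowski--Rhoades' cell description and is where the Fubini-word combinatorics enter.
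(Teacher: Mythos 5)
Your proposal follows essentially the same route as the paper: realize $X_w$ for convex $w$ as the Fulton--Lascoux degeneracy locus of the summation map $\bigoplus_i \cL_i \to \cO^{\oplus k}$, then reduce general words to the convex case via the $S_n$-action permuting the factors of $\Pnk$ (a step which the paper, like you, ultimately defers to Pawlowski--Rhoades' inductive argument for Proposition~5.11). The one detail you gloss over is that the degeneracy loci formula naturally outputs the double Grothendieck polynomial $\mathfrak{G}_{\std(w)^{-1}}(0\mid -x_1,\dots,-x_n)$ (trivial target bundle, source Chern roots $-x_j$), so one still needs the standard symmetry $\mathfrak{G}_{u^{-1}}(0\mid -x)=\mathfrak{G}_{u}(x)$ to land on $\mathfrak{G}_{\std(w)}(x_1,\dots,x_n)$.
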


Motivated by Pawlowski-Rhoades' definition of the \textit{Schubert polynomial of a word} and \Cref{def:grothendieck-poly-of-word}, we propose natural notions of \textit{classical} and \textit{bumpless pipe dreams} for words in $\Words$ in \Cref{sec:PD-for-words}. Below are all reduced pipe dreams for the Fubini word $21231\in [3]^5$. Notice how these pipe dreams fit inside a $5\times 3$ box, reflecting $n = 5$ and $k = 3$. 

\begin{footnotesize}
    \[\tikzpicture
  \PipeStart{5}
  \PipePlace{1}{1}{\pipeCross}
  \PipePlace{1}{2}{\pipeBump}
  \PipePlace{1}{3}{\pipeCross}
  \PipePlace{2}{1}{\pipeCross}
  \PipePlace{2}{2}{\pipeBump}
  \PipePlace{2}{3}{\pipeCross}
  \PipePlace{3}{1}{\pipeBump}
  \PipePlace{3}{2}{\pipeBump}
  \PipePlace{3}{3}{\pipeElbow}
  \PipePlace{4}{1}{\pipeBump}
  \PipePlace{4}{2}{\pipeElbow}
  \PipePlace{5}{1}{\pipeElbow}
  \PipeEnd
  \begin{scope}[x=2em,y=2em, yscale=-1, shift={(-6,5)}]
    \node[right] at (5,0.5) {\small $x_1$};
    \node[right] at (5,1.5) {\small $x_3$};
    \node[right] at (5,2.5) {\small $x_2$};
    \node[right] at (5,3.5) {\small $x_5$};
    \node[right] at (5,4.5) {\small $x_4$};
  \end{scope}
\endtikzpicture \quad
\tikzpicture
  \PipeStart{5}
  \PipePlace{1}{1}{\pipeCross}
  \PipePlace{1}{2}{\pipeBump}
  \PipePlace{1}{3}{\pipeCross}
  \PipePlace{2}{1}{\pipeCross}
  \PipePlace{2}{2}{\pipeBump}
  \PipePlace{2}{3}{\pipeBump}
  \PipePlace{3}{1}{\pipeBump}
  \PipePlace{3}{2}{\pipeCross}
  \PipePlace{3}{3}{\pipeElbow}
  \PipePlace{4}{1}{\pipeBump}
  \PipePlace{4}{2}{\pipeElbow}
  \PipePlace{5}{1}{\pipeElbow}
  \PipeEnd
  \begin{scope}[x=2em,y=2em, yscale=-1, shift={(-6,5)}]
    \node[right] at (5,0.5) {\small $x_1$};
    \node[right] at (5,1.5) {\small $x_3$};
    \node[right] at (5,2.5) {\small $x_2$};
    \node[right] at (5,3.5) {\small $x_5$};
    \node[right] at (5,4.5) {\small $x_4$};
  \end{scope}
\endtikzpicture
\quad
\tikzpicture
  \PipeStart{5}
  \PipePlace{1}{1}{\pipeCross}
  \PipePlace{1}{2}{\pipeBump}
  \PipePlace{1}{3}{\pipeBump}
  \PipePlace{2}{1}{\pipeCross}
  \PipePlace{2}{2}{\pipeCross}
  \PipePlace{2}{3}{\pipeBump}
  \PipePlace{3}{1}{\pipeBump}
  \PipePlace{3}{2}{\pipeCross}
  \PipePlace{3}{3}{\pipeElbow}
  \PipePlace{4}{1}{\pipeBump}
  \PipePlace{4}{2}{\pipeElbow}
  \PipePlace{5}{1}{\pipeElbow}
  \PipeEnd
  \begin{scope}[x=2em,y=2em, yscale=-1, shift={(-6,5)}]
    \node[right] at (5,0.5) {\small $x_1$};
    \node[right] at (5,1.5) {\small $x_3$};
    \node[right] at (5,2.5) {\small $x_2$};
    \node[right] at (5,3.5) {\small $x_5$};
    \node[right] at (5,4.5) {\small $x_4$};
  \end{scope}
\endtikzpicture
\quad 
\tikzpicture
  \PipeStart{5}
  \PipePlace{1}{1}{\pipeCross}
  \PipePlace{1}{2}{\pipeBump}
  \PipePlace{1}{3}{\pipeCross}
  \PipePlace{2}{1}{\pipeCross}
  \PipePlace{2}{2}{\pipeBump}
  \PipePlace{2}{3}{\pipeBump}
  \PipePlace{3}{1}{\pipeBump}
  \PipePlace{3}{2}{\pipeBump}
  \PipePlace{3}{3}{\pipeElbow}
  \PipePlace{4}{1}{\pipeCross}
  \PipePlace{4}{2}{\pipeElbow}
  \PipePlace{5}{1}{\pipeElbow}
  \PipeEnd
  \begin{scope}[x=2em,y=2em, yscale=-1, shift={(-6,5)}]
    \node[right] at (5,0.5) {\small $x_1$};
    \node[right] at (5,1.5) {\small $x_3$};
    \node[right] at (5,2.5) {\small $x_2$};
    \node[right] at (5,3.5) {\small $x_5$};
    \node[right] at (5,4.5) {\small $x_4$};
  \end{scope}
\endtikzpicture
\quad 
\tikzpicture
  \PipeStart{5}
  \PipePlace{1}{1}{\pipeCross}
  \PipePlace{1}{2}{\pipeBump}
  \PipePlace{1}{3}{\pipeBump}
  \PipePlace{2}{1}{\pipeCross}
  \PipePlace{2}{2}{\pipeCross}
  \PipePlace{2}{3}{\pipeBump}
  \PipePlace{3}{1}{\pipeBump}
  \PipePlace{3}{2}{\pipeBump}
  \PipePlace{3}{3}{\pipeElbow}
  \PipePlace{4}{1}{\pipeCross}
  \PipePlace{4}{2}{\pipeElbow}
  \PipePlace{5}{1}{\pipeElbow}
  \PipeEnd
  \begin{scope}[x=2em,y=2em, yscale=-1, shift={(-6,5)}]
    \node[right] at (5,0.5) {\small $x_1$};
    \node[right] at (5,1.5) {\small $x_3$};
    \node[right] at (5,2.5) {\small $x_2$};
    \node[right] at (5,3.5) {\small $x_5$};
    \node[right] at (5,4.5) {\small $x_4$};
  \end{scope}
\endtikzpicture
\]
\end{footnotesize}
Next, we enumerate all bumpless pipe dreams for $21231$. The $x_i$ variables on the left dictate how monomial weights for these pipe dreams are computed.
\begin{footnotesize}
\[
\tikzpicture
  \BPDStart{5}{3}
  \BPDPlace{1}{1}{\blank}
  \BPDPlace{1}{2}{\SE}
  \BPDPlace{1}{3}{\hor}
  \BPDPlace{2}{1}{\blank}
  \BPDPlace{2}{2}{\ver}
  \BPDPlace{2}{3}{\blank}
  \BPDPlace{3}{1}{\SE}
  \BPDPlace{3}{2}{\+}
  \BPDPlace{3}{3}{\hor}
  \BPDPlace{4}{1}{\ver}
  \BPDPlace{4}{2}{\ver}
  \BPDPlace{4}{3}{\blank}
  \BPDPlace{5}{1}{\ver}
  \BPDPlace{5}{2}{\ver}
  \BPDPlace{5}{3}{\SE}
  \BPDEnd
  \begin{scope}[x=2em,y=2em, yscale=-1, shift={(-6,5)}]
    \node[right] at (5,0.5) {\small $x_1$};
    \node[right] at (5,1.5) {\small $x_3$};
    \node[right] at (5,2.5) {\small $x_2$};
    \node[right] at (5,3.5) {\small $x_5$};
    \node[right] at (5,4.5) {\small $x_4$};
  \end{scope}
\endtikzpicture
\quad 
\tikzpicture
  \BPDStart{5}{5}
  \BPDPlace{1}{1}{\blank}
  \BPDPlace{1}{2}{\blank}
  \BPDPlace{1}{3}{\SE}
  \BPDPlace{2}{1}{\blank}
  \BPDPlace{2}{2}{\SE}
  \BPDPlace{2}{3}{\NW}
  \BPDPlace{3}{1}{\SE}
  \BPDPlace{3}{2}{\+}
  \BPDPlace{3}{3}{\hor}
  \BPDPlace{4}{1}{\ver}
  \BPDPlace{4}{2}{\ver}
  \BPDPlace{4}{3}{\blank}
  \BPDPlace{5}{1}{\ver}
  \BPDPlace{5}{2}{\ver}
  \BPDPlace{5}{3}{\SE}
  \BPDEnd
 \begin{scope}[x=2em,y=2em, yscale=-1, shift={(-6,5)}]
    \node[right] at (5,0.5) {\small $x_1$};
    \node[right] at (5,1.5) {\small $x_3$};
    \node[right] at (5,2.5) {\small $x_2$};
    \node[right] at (5,3.5) {\small $x_5$};
    \node[right] at (5,4.5) {\small $x_4$};
  \end{scope}
\endtikzpicture \quad
\tikzpicture
  \BPDStart{5}{5}
  \BPDPlace{1}{1}{\blank}
  \BPDPlace{1}{2}{\blank}
  \BPDPlace{1}{3}{\SE}
  \BPDPlace{2}{1}{\blank}
  \BPDPlace{2}{2}{\blank}
  \BPDPlace{2}{3}{\ver}
  \BPDPlace{3}{1}{\SE}
  \BPDPlace{3}{2}{\hor}
  \BPDPlace{3}{3}{\+}
  \BPDPlace{4}{1}{\ver}
  \BPDPlace{4}{2}{\SE}
  \BPDPlace{4}{3}{\NW}
  \BPDPlace{5}{1}{\ver}
  \BPDPlace{5}{2}{\ver}
  \BPDPlace{5}{3}{\SE}
  \BPDEnd
   \begin{scope}[x=2em,y=2em, yscale=-1, shift={(-6,5)}]
    \node[right] at (5,0.5) {\small $x_1$};
    \node[right] at (5,1.5) {\small $x_3$};
    \node[right] at (5,2.5) {\small $x_2$};
    \node[right] at (5,3.5) {\small $x_5$};
    \node[right] at (5,4.5) {\small $x_4$};
  \end{scope}
\endtikzpicture \quad
\tikzpicture
  \BPDStart{5}{5}
  \BPDPlace{1}{1}{\blank}
  \BPDPlace{1}{2}{\SE}
  \BPDPlace{1}{3}{\hor}
  \BPDPlace{2}{1}{\blank}
  \BPDPlace{2}{2}{\ver}
  \BPDPlace{2}{3}{\blank}
  \BPDPlace{3}{1}{\blank}
  \BPDPlace{3}{2}{\ver}
  \BPDPlace{3}{3}{\SE}
  \BPDPlace{4}{1}{\SE}
  \BPDPlace{4}{2}{\+}
  \BPDPlace{4}{3}{\NW}
  \BPDPlace{5}{1}{\ver}
  \BPDPlace{5}{2}{\ver}
  \BPDPlace{5}{3}{\SE}
  \BPDEnd
   \begin{scope}[x=2em,y=2em, yscale=-1, shift={(-6,5)}]
    \node[right] at (5,0.5) {\small $x_1$};
    \node[right] at (5,1.5) {\small $x_3$};
    \node[right] at (5,2.5) {\small $x_2$};
    \node[right] at (5,3.5) {\small $x_5$};
    \node[right] at (5,4.5) {\small $x_4$};
  \end{scope}
\endtikzpicture \quad
\tikzpicture
  \BPDStart{5}{5}
  \BPDPlace{1}{1}{\blank}
  \BPDPlace{1}{2}{\blank}
  \BPDPlace{1}{3}{\SE}
  \BPDPlace{2}{1}{\blank}
  \BPDPlace{2}{2}{\blank}
  \BPDPlace{2}{3}{\ver}
  \BPDPlace{3}{1}{\SE}
  \BPDPlace{3}{2}{\hor}
  \BPDPlace{3}{3}{\+}
  \BPDPlace{4}{1}{\ver}
  \BPDPlace{4}{2}{\SE}
  \BPDPlace{4}{3}{\NW}
  \BPDPlace{5}{1}{\ver}
  \BPDPlace{5}{2}{\ver}
  \BPDPlace{5}{3}{\SE}
  \BPDEnd
  \begin{scope}[x=2em,y=2em, yscale=-1, shift={(-6,5)}]
    \node[right] at (5,0.5) {\small $x_1$};
    \node[right] at (5,1.5) {\small $x_3$};
    \node[right] at (5,2.5) {\small $x_2$};
    \node[right] at (5,3.5) {\small $x_5$};
    \node[right] at (5,4.5) {\small $x_4$};
  \end{scope}
\endtikzpicture
\]
\end{footnotesize}
Our definition of bumpless pipe dreams of a word is compatible with the diagram of a word, as in Billey-Ryan \cite{BilleyRyan2024}. As a straightforward consequence of these definitions, we prove that Schubert polynomials of words are monomial-weight generating functions for reduced classical and bumpless pipe dreams. We also prove that Grothendieck polynomials of words are monomial-weight generating functions for non-reduced classical and bumpless pipe dreams. These results are complete analogues of classical results for Schubert and Grothendieck polynomials for permutations.

The structure of the paper is as follows.  In \Cref{sec:background}, we collect the necessary background on the Grothendieck group, the Chow ring, Chern classes, cellular varieties, Schubert and Grothendieck polynomials, and degeneracy loci, and we review classical and bumpless pipe dreams for permutations.  In \Cref{sec:Xnk-cohomology}, we recall the Pawlowski--Rhoades cell decomposition of $X_{n,k}$ and their presentation of $H^\bullet(X_{n,k}(\CC);\ZZ)$ in terms of Schubert polynomials of words.  In \Cref{sec:K0-of-Xnk}, we establish the key $K$-theoretic lemmas for cellular varieties, identify the classes $[\cO_{X_w}]$ with Grothendieck polynomials of words, and prove \Cref{thm:main}.  In \Cref{sec:PD-for-words}, we develop pipe dream and bumpless pipe dream models for words and relate them to the Schubert and Grothendieck bases of $R_{n,k}$. In \Cref{sec:open-problems}, we discuss several directions for further work.

\section{Background}\label{sec:background}

\subsection{Algebro-geometric Prerequisites}

 \label{subsec:grothendieck-group}

We provide the necessary algebro-geometric background in this section. We begin by summarizing some facts about two kinds of Grothendieck groups of a scheme from \cite[\S 2]{weibel2013k}. We use capital letters like $X$ for schemes, calligraphic letters like $\mathcal{E}$ to denote sheaves and vector bundles, square brackets like $[\mathcal{E}]$ to mean an equivalence class, and bold font letters like $\mathbf E$ to denote categories. An \textit{exact category} (in the sense of Quillen) is an additive category $\mathbf{E}$ with a well-defined notion of short exact sequences. Let $\mathbf{E}$ be an exact category with a set of isomorphism classes $\operatorname{Iso}\mathbf{E}$. Then, the \textit{Grothendieck group} $K_0(\mathbf{E})$ is the free abelian group $\ZZ\cdot \operatorname{Iso}\mathbf{E}$ modulo the relations
\[
  [B]=[A]+[C]\quad\text{whenever } A\to B\to C\text{ is a short exact sequence in }\mathbf{E}.
\]
An \textit{abelian category} is a category enriched over abelian groups, with a distinguished zero object, where kernels and cokernels are well-defined for all morphisms. Abelian categories are special cases of exact categories. 

Let $\kk$ be any base field. Let $X$ be an $\kk$-scheme of finite type, and let $\mathbf{Coh}_X$ be the abelian category of coherent $\cO_X$-modules under direct sum. Let $\mathbf{VB}_X$ be the exact subcategory of $\operatorname{Coh}_X$ consisting of locally free sheaves of finite rank (equivalently, \textit{algebraic vector bundles}). Note that $\mathbf{VB}_X$ is not abelian in general, since kernels and cokernels of maps of vector bundles need not be locally free. There are two Grothendieck groups naturally associated to the scheme $X$, which we denote by $G_0(X)$ and $K_0(X)$ following standard notation. 

\begin{definition}
  Let $X$ be an $\kk$-scheme. The \textit{Grothendieck group of coherent sheaves} on $X$ is \[G_0(X) := K_0(\mathbf{Coh}_X).\]
  The \textit{Grothendieck group of vector bundles} on $X$ is \[K_0(X):= K_0(\mathbf{VB}_X).\]
\end{definition}

The natural inclusion $\mathbf{VB}_X\to \mathbf{Coh}_X$ induces a map on the abelian groups $K_0(X)\to G_0(X)$ called the Cartan homomorphism. The tensor product $\otimes$ of vector bundles endows $K_0(X)$ with the structure of a commutative unital ring. We write $[\mathcal L\otimes \mathcal M]$ as $[\mathcal L]\cdot [\mathcal M]$. The unit of $K_0(X)$ is the class of the trivial line bundle $[\cO_X]$, which we abbreviate as $1\in K_0(X)$.  Now assume $X$ to be smooth. Then, every coherent $\cO_X$-module has a finite resolution by algebraic vector bundles, so the Cartan homomorphism becomes an isomorphism \cite[\textsc{Theorem 8.2}]{weibel2013k}. 

Both $G_0$ and $K_0$ are functorial and admit pushforward along proper morphisms and pullback along flat morphisms \cite[\textsc{Lemma} 6.2.6, \S 2.8]{weibel2013k}. If $f:X\to Y$ is proper and $\mathcal E$ is a vector bundle on $X$, then the class $f_*[\mathcal E] \in K_0(Y)$ is defined as $\sum_{i\ge 0} (-1)^i[R^if_*\mathcal E]$ where $R^i$ is the $i^\text{th}$ higher direct image functor. In particular, if $i:Z\to X$ is the closed immersion of a subvariety, then $[i_*\cO_Z]$ is a class in $G_0(X)$. Since the inclusion $i$ is clear in most cases, we may denote this class simply by $[\cO_Z]$ in $G_0(X)$, called the \textit{fundamental class} of $Z$.

\begin{theorem}[Localization sequence, \cite{weibel2013k} \textsc{Application 6.4.2}] \label{thm:K0-localization} 
    Let $X$ be a quasiprojective variety over $\kk$, and let $i: Z\to X$ be inclusion of a closed subvariety $Z$ into $X$. Let $U = X-Z$ be the open complement and $j: U\to X$ be the inclusion. There is a right exact sequence of abelian groups $$G_0(Z)\overset{i_*}{\longrightarrow} G_0(X)\overset{j^*}{\longrightarrow} G_0(U) \to 0.$$
    If $X$ is in addition smooth, then there is a right exact sequence  \begin{equation}
            G_0(Z)\overset{i_*}{\longrightarrow} K_0(X)\overset{j^*}{\longrightarrow} K_0(U) \to 0,
        \end{equation}
where the flat pullback map $j^*$ is a homomorphism of rings. 
\end{theorem}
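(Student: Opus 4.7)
The plan is to prove the $G_0$ statement first, and then obtain the $K_0$ statement as a formal corollary using the Cartan isomorphism $G_0(X)\cong K_0(X)$ for smooth $X$.

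For the $G_0$ sequence, I would verify the three required properties in turn. First, $j^{*}\circ i_{*}=0$: if $\mathcal{F}$ is a coherent sheaf on $Z$, then $i_{*}\mathcal{F}$ is set-theoretically supported on $Z$, so $j^{*}i_{*}\mathcal{F}=(i_{*}\mathcal{F})|_{U}=0$. Second, surjectivity of $j^{*}$: given any coherent sheaf $\mathcal{G}$ on $U$, the quasiprojective hypothesis on $X$ (and hence Noetherian behavior of $U$) allows me to extend $\mathcal{G}$ to a coherent sheaf $\widetilde{\mathcal{G}}$ on $X$ with $\widetilde{\mathcal{G}}|_{U}\cong\mathcal{G}$; this is the standard extension result for coherent sheaves along an open immersion of Noetherian schemes. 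Thus $j^{*}[\widetilde{\mathcal{G}}]=[\mathcal{G}]$.

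The substantive step is exactness in the middle, which I would establish by dévissage. Suppose $\sum_{\alpha}n_{\alpha}[\mathcal{F}_{\alpha}]$ is a class in $G_0(X)$ whose image in $G_0(U)$ vanishes. The strategy is to build an explicit preimage in $G_0(Z)$. Concretely, for any coherent sheaf $\mathcal{F}$ on $X$, choose a coherent extension of $\mathcal{F}|_U$ to all of $X$; then the difference is set-theoretically supported on $Z$, and by filtering this difference by its structure sheaf along its support using the Noetherian condition, I can write it as a finite $\ZZ$-linear combination of classes of the form $[i_{*}\mathcal{H}]$ for coherent sheaves $\mathcal{H}$ on $Z$. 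This reduces the problem to showing that a class in the image of $i_{*}$ that restricts to zero under $j^{*}$ already lies in the image of $i_{*}$ — which it tautologically does. The key technical input is that, on a Noetherian scheme, any coherent sheaf supported on a closed subscheme $Z$ admits a finite filtration whose associated graded pieces are pushforwards along $i$ of coherent sheaves on $Z$ (one iteratively quotients by the ideal sheaf $\mathcal{I}_{Z}$, which is nilpotent on the support after finitely many steps). Combined with additivity of $[\,\cdot\,]$ in short exact sequences, this completes the proof of exactness.

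For the smooth case, I would invoke the Cartan isomorphism $K_0(X)\xrightarrow{\cong} G_0(X)$, which holds because every coherent sheaf on a smooth variety admits a finite resolution by vector bundles \cite[Theorem 8.2]{weibel2013k}. Transporting the $G_0$ sequence through this isomorphism yields the stated $K_0$ sequence with $i_{*}:G_0(Z)\to K_0(X)$ given by composing the pushforward into $G_0(X)$ with the inverse Cartan map. Finally, $j^{*}$ is a ring homomorphism because, for vector bundles $\mathcal{E},\mathcal{F}$ on $X$, flat pullback satisfies $j^{*}(\mathcal{E}\otimes_{\cO_X}\mathcal{F})\cong j^{*}\mathcal{E}\otimes_{\cO_U}j^{*}\mathcal{F}$ and $j^{*}\cO_X=\cO_U$, which is immediate from the definition of pullback of vector bundles.

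The main obstacle is the middle-exactness step: carefully justifying the dévissage so that every class killed by restriction to $U$ is visibly a $\ZZ$-linear combination of pushforwards from $Z$. This requires care about whether one works modulo reducedness (using $\mathcal{I}_Z$-adic filtrations on the support) and an appeal to the Noetherian hypothesis to ensure such filtrations terminate in finitely many steps.
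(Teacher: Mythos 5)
The paper does not actually prove this statement: it is quoted as a known result from Weibel's $K$-book (Application 6.4.2), so there is no in-paper argument to compare against. Measured against the standard textbook proof, your first two steps ($j^{*}i_{*}=0$ and surjectivity of $j^{*}$ via coherent extension along the open immersion), the dévissage lemma (a coherent sheaf set-theoretically supported on $Z$ has a finite $\mathcal{I}_Z$-adic filtration with graded pieces pushed forward from $Z$), and the deduction of the $K_0$ sequence from the $G_0$ sequence via the Cartan isomorphism are all correct.

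The gap is in middle exactness. You begin with a class $\sum_{\alpha}n_{\alpha}[\mathcal{F}_{\alpha}]$ killed by $j^{*}$ and then treat ``the difference'' between a sheaf and an extension of its restriction as if it were a sheaf supported on $Z$. But the hypothesis is only that $\sum_{\alpha}n_{\alpha}[\mathcal{F}_{\alpha}|_{U}]=0$ \emph{as a class} in $G_0(U)$, i.e.\ the combination cancels via some chain of short exact sequences of coherent sheaves on $U$; no sheaf need vanish on $U$, and those short exact sequences on $U$ need not lift to short exact sequences among the $\mathcal{F}_{\alpha}$ on $X$. So your dévissage has nothing to apply to, and the concluding sentence (``which it tautologically does'') is circular. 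The standard repair is to build the inverse map directly: define $\psi\colon G_0(U)\to G_0(X)/i_{*}G_0(Z)$ by $[\mathcal{G}]\mapsto[\widetilde{\mathcal{G}}]$ for a coherent extension $\widetilde{\mathcal{G}}$, then check (a) independence of the chosen extension --- any two extensions admit a common comparison whose kernel and cokernel are genuinely supported on $Z$, which is where your dévissage lemma correctly enters --- and (b) additivity on short exact sequences over $U$, by extending the subsheaf inside a fixed extension of the middle term and taking the quotient. Once $\psi$ is well defined it is inverse to the map $G_0(X)/i_{*}G_0(Z)\to G_0(U)$ induced by $j^{*}$, which gives exactness at $G_0(X)$.
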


\begin{theorem}[Homotopy Invariance, \cite{weibel2013k} \textsc{Fundamental Theorem 6.5.1}]
 \label{thm:K0-homotopy}  Let $X$ be an $\kk$-scheme. If $E$ is a locally trivial bundle of affine spaces over $X$, then $G_0(E)\cong G_0(X)$. In particular, $G_0(\bA^n\times X) \cong G_0(X)$, and $K_0(\bA^n)\cong \ZZ$. 
\end{theorem}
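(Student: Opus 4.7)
The plan is to show that the flat pullback $\pi^*: G_0(X) \to G_0(E)$ along the bundle projection $\pi: E \to X$ is an isomorphism. First I would reduce to the trivial bundle case $E = \bA^n \times X$: given a Zariski open cover $\{U_i\}$ of $X$ over which $E$ is trivial, a Mayer--Vietoris argument obtained by splicing the localization sequences of \Cref{thm:K0-localization} for complementary open and closed inclusions reduces us to the trivial case over each $U_i$. Then iterating the factorization $\bA^n \times X \to \bA^{n-1} \times X \to \cdots \to X$ reduces the problem further to the one-dimensional case $E = \bA^1 \times X$.

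For surjectivity of $\pi^*: G_0(X) \to G_0(\bA^1 \times X)$, I would proceed by Noetherian induction on $\dim X$. Given a class $[\mathcal F] \in G_0(\bA^1 \times X)$, the restriction of $\mathcal F$ to the fiber over the generic point of $X$ is a finitely generated module over $k(X)[t]$, which is a PID, and hence splits as a free part plus a torsion part admitting a length-one free resolution. Spreading this decomposition out over an open dense subset of $X$ and using the localization sequence to identify the correction term as supported on $\bA^1 \times Z$ for some proper closed $Z \subsetneq X$, the inductive hypothesis applied to $Z$ completes surjectivity.

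The main obstacle is injectivity. My approach is to construct an explicit left inverse using the zero section $i_0: X \hookrightarrow \bA^1 \times X$, $x \mapsto (0, x)$. Since $i_0$ is a regular closed immersion cut out globally by the equation $t = 0$, the Koszul resolution has length one:
\[
0 \longrightarrow \cO_{\bA^1 \times X} \xrightarrow{\,\cdot\, t\,} \cO_{\bA^1 \times X} \longrightarrow i_{0*}\cO_X \longrightarrow 0.
\]
I would then define a refined Gysin map $i_0^!: G_0(\bA^1 \times X) \to G_0(X)$ by
\[
i_0^! [\mathcal F] := [i_0^* \mathcal F] - [\operatorname{Tor}_1^{\cO_{\bA^1 \times X}}(i_{0*}\cO_X, \mathcal F)],
\]
and verify that $i_0^!$ descends to $G_0$ by checking additivity on short exact sequences through the long exact sequence of $\operatorname{Tor}$. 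A direct computation with the above Koszul resolution then yields $i_0^! \circ \pi^* = \operatorname{id}_{G_0(X)}$, completing injectivity. The stated consequences follow immediately: taking $X = \operatorname{Spec} \kk$ gives $G_0(\bA^n) \cong G_0(\operatorname{Spec} \kk) = \ZZ$, since coherent sheaves on a point are classified by their dimension as a $\kk$-vector space, and the smoothness of $\bA^n$ identifies $G_0(\bA^n)$ with $K_0(\bA^n)$ via the Cartan homomorphism.
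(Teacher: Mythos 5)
The paper does not prove this statement; it is cited directly from Weibel's $K$-book, so your argument can only be measured against the standard reference proof. For the trivial-bundle case $\bA^1\times X$ (and hence $\bA^n\times X$ by iteration, and $K_0(\bA^n)\cong\ZZ$), your argument is correct and is essentially the classical one: surjectivity by noetherian induction, passing to the generic fiber over the PID $k(X)[t]$ where $G_0\cong\ZZ$, spreading out over a dense open, and pushing the error term into $\bA^1\times Z$ via the right-exact localization sequence of \Cref{thm:K0-localization}; injectivity by the zero-section Gysin map $i_0^!$, which is well defined on $G_0$ because the Koszul resolution of $i_{0*}\cO_X$ has length one (so $\operatorname{Tor}_{\ge 2}$ vanishes and the six-term $\operatorname{Tor}$ sequence gives additivity), and which visibly satisfies $i_0^!\circ\pi^*=\operatorname{id}$ since $t$ acts injectively on $\pi^*\mathcal G$. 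These are exactly the cases the paper actually uses (in \Cref{thm:G0-free-abelian-when-cellular} the cells are affine spaces).

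The genuine gap is in your first reduction, from a general locally trivial bundle of affine spaces to the trivial case. The localization sequence available at the level of $G_0$ is only \emph{right} exact, so splicing the sequences for $(E|_Z, E, E|_U)$ and $(Z,X,U)$ and chasing the diagram does give surjectivity of $\pi^*$ by induction, but it cannot give injectivity: knowing that the outer vertical maps are isomorphisms says nothing about the kernel of the middle one when the rows are not exact on the left. Your left inverse $i_0^!$ does not rescue this, because a locally trivial affine bundle need not admit a global section, so there is no global $i_0$ to work with. This is precisely why Weibel and Quillen prove the statement using the full localization \emph{long} exact sequence in higher $G$-theory (extending the rows to $G_1(U)\to G_0(Z)\to\cdots$), after which the five lemma applies. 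If you want to stay at the level of $G_0$, you should either restrict the statement to trivial (or globally sectioned) bundles — which suffices for everything in this paper — or explicitly import the $G_1$-extended localization sequence for the reduction step.
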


\medskip\paragraph{\textsc{The Chow ring}} For a variety $X$, the Chow groups $CH_i(X)$ are the groups of $i$-dimensional algebraic cycles modulo rational equivalence; see, e.g., \cite[Chapter 1]{eisenbudharris3264}. When $X$ is smooth, we use the codimension grading $CH^p(X):=CH_{\dim X-p}(X)$, and the intersection product endows $CH^\bullet(X):=\bigoplus_p CH^p(X)$ with a graded-commutative ring structure (the Chow ring). The cycle class map \(
  cl: CH^\bullet(X)\to H^{2\bullet}(X(\CC);\ZZ)
\) is a functorial ring homomorphism sending an algebraic cycle to its singular cohomology class. 

\begin{small}
\begin{table}[ht]
    \centering
    \begin{tabular}{r|ccc} 
      & \textbf{$K$-theory} &  \textbf{Chow theory} & \textbf{Cohomology} \\ 
    \hline
    \textbf{Borel--Moore} & Coherent sheaves $G_0$ & Chow groups $CH_\bullet$ & Borel--Moore homology $H^{\textrm{BM}}_\bullet$\\ 
    \textbf{Cohomological} & Vector bundles $K_0$ & Chow ring $CH^\bullet$ & Singular cohomology $H^\bullet$
    \end{tabular}
    \medskip
    \caption{Comparison of topological invariants}
    \label{tab:cohomology-theories}
\end{table}
\end{small}

\paragraph{\textsc{Chern classes in the Grothendieck group $K_0$}}\label{subsec:chern-classes}
Let $X$ be a smooth algebraic variety of dimension $n$ and $\mathcal E$ a vector bundle of rank $r$. The \textit{$\lambda$-operations} on $K_0(X)$ are defined for all $i \geq 0$, where $\lambda_i: K_0(X)\to K_0(X)$ sends the class $[\mathcal E]$ to the class of the $i^\text{th}$ exterior power $  [\wedge^i \mathcal E]$. For all $0\leq i\leq r$, the $i^\text{th}$ \textit{$K$-theoretic Chern class} $c_i^K([\mathcal E])$ is the class $(-1)^i\left( \sum_{h = 1}^i \binom{i-1}{h-1}\lambda_h([\mathcal E]) - r \right)$ in $K_0(X)$, where $r = r\cdot [\cO_X]$ stands for the class $[\cO_X^{\oplus r}]$. In particular, the \textit{$K$-theoretic first Chern class} of a line bundle $L$ is  $c_1^K(\mathcal{L}) := 1 - [\mathcal{L}^\vee]$. The \textit{total Chern polynomial} is $c^K([\mathcal E])= \sum_{i = 0}^r c_i^K([\mathcal E])t^i$. We assume the following facts about $c_i^K$. 

\begin{itemize}
    \item (\textit{Whitney sum formula}) If $\mathcal E'\to \mathcal E\to \mathcal E''$ is a short exact sequence of vector bundles over $X$, then $c^K([\mathcal E]) = c^K([\mathcal E'])c^K([\mathcal E''])$.
    \item (\textit{Multiplicative group law for first Chern classes}) If $\mathcal L$ and $\mathcal M$ are line bundles, then it follows from the definition of $c_1^K$ that $$\begin{aligned}
        c_1^K(\mathcal L\otimes \mathcal M) & = 1-[\mathcal L^\vee]\cdot [\mathcal M^\vee] \\ & = 1 - (1 - c_1^K(\mathcal L))(1 - c_1^K(\mathcal M))\\ &= c_1^K(\mathcal L) + c_1^K(\mathcal M) - c_1^K(\mathcal L)\cdot c_1^K(\mathcal M).
    \end{aligned}$$ 
    \item (\textit{Normalization}) All Chern classes of the trivial bundle $\cO_X$ are zero.
\end{itemize}

\medskip\paragraph{\textsc{Chern classes in the Chow ring}}  Let $X$ be a smooth algebraic variety. For a vector bundle $\mathcal E$ of rank $n$, there exists a class $c_i(\mathcal E) \in C H^{i}(X)$ for each $0\leq i\leq n$, which represents the degeneracy locus where $n-i+1$ general sections of $\mathcal E$ become linearly dependent (\cite[\S 4]{fulton1984intersection}). The class $c_i(\mathcal E)$ is called the $i^\text{th}$ \textit{Chern class} of the bundle $\mathcal E$, and the generating function $c(\mathcal E):= \sum_{0\leq i\leq n} c_i(\mathcal E)t^i$ is called the \textit{total Chern class}. In this paper, we assume the following facts about Chern classes. Chow-theoretic Chern classes satisfy the same Whitney sum formula, normalization, and the additive group law as follows.

\begin{itemize}
    \item (\textit{Additive group law for first Chern classes}) If $\mathcal L$ and $\mathcal M$ are line bundles over $X$, then $c_1(\mathcal L\otimes \mathcal M) = c_1(\mathcal L) + c_1(\mathcal M)$.
\end{itemize}

\medskip\paragraph{\textsc{Chern classes in cohomology}}  Let $X(\CC)$ be a smooth complex algebraic variety. For a vector bundle $\mathcal E$ of rank $n$, there exists a cohomology class $c_i(\mathcal E) \in H^{2i}(X)$ for each $0\leq i\leq n$ satisfying the same set of axioms as Chern classes in the Chow ring. The cycle class map sends the Chow-theoretic Chern classes to the cohomological Chern classes.

\medskip\paragraph{\textsc{Cellular varieties}}\label{subsec:cellular-varieties}
A variety $X$ has a \textit{cell decomposition} if $X$ admits a filtration by closed subvarieties $$
X=X_n \supset X_{n-1} \supset \ldots \supset X_0 \supset X_{-1}=\varnothing
$$
where for all $i$, $X_i - X_{i-1}$ is a finite disjoint union of varieties $C_{ij}$, called the \textit{cells}, such that each cell is isomorphic to an affine space $\bA^{d_{ij}}$. 
Varieties with a cell decomposition are called \textit{cellular varieties.} Standard examples include projective spaces, Grassmannians, the flag variety, and many toric varieties. For more on cellular varieties, we refer our reader to \cite[Example 1.9.1]{fulton1984intersection} and \cite[\S 1]{eisenbudharris3264}.

\begin{theorem}
    \label{thm:cellular-finite-generation-of-cohomology} Let $X$ be a (possibly singular) cellular variety with cells $C_i$ indexed by some finite set $\mathcal{I}$. Let $E$ denote either the zeroth $G$-theory $G_0$, the Chow groups $CH_\bullet$, or singular homology $H_\bullet$. Then $E(X)$ is a free abelian group with basis $\{[\overline{C}_i]\mid i\in \mathcal{I}\}$ consisting of the closure of each cell. 
\end{theorem}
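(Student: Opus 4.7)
I would proceed by induction on the length $n$ of the filtration $X = X_n \supset \cdots \supset X_0 \supset X_{-1} = \varnothing$. The base case $X_{-1} = \varnothing$ is vacuous. Assuming the claim for $X_{i-1}$, consider the open stratum $U_i := X_i \setminus X_{i-1} = \bigsqcup_j C_{ij}$, a finite disjoint union of affine spaces $\bA^{d_{ij}}$. Homotopy invariance (\Cref{thm:K0-homotopy}) together with additivity of $E$ over disjoint unions identifies $E(U_i)$ as the free abelian group on the fundamental classes $\{[C_{ij}]\}_j$. The localization sequence (\Cref{thm:K0-localization} for $E = G_0$, and its analogues for $CH_\bullet$ and $H_\bullet$) then yields the right exact sequence
\[
E(X_{i-1}) \xrightarrow{\iota_*} E(X_i) \xrightarrow{j^*} E(U_i) \longrightarrow 0.
\]
Since $\overline{C}_{ij} \cap U_i = C_{ij}$ for every top cell, flatness of $j$ gives $j^*[\overline{C}_{ij}] = [C_{ij}]$. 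Combined with the inductive hypothesis on $X_{i-1}$, the set $\{[\overline{C}_{lj}] : l \le i\}$ spans $E(X_i)$.

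To upgrade this spanning set to a basis, it suffices to show that $\iota_*$ is injective, or equivalently that the inductively given relations in $E(X_{i-1})$ are not enlarged on passage to $E(X_i)$. Once injectivity is established, the candidate section $\sigma : E(U_i) \to E(X_i)$ defined by $[C_{ij}] \mapsto [\overline{C}_{ij}]$ splits the right exact sequence, producing the isomorphism $E(X_i) \cong E(X_{i-1}) \oplus E(U_i)$ and closing the induction.

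The main obstacle is exactly this injectivity/splitting step. For $CH_\bullet$ and Borel--Moore $H_\bullet$, the grading by dimension separates cells of distinct dimensions and forces the connecting maps in the long exact localization sequence to vanish for purely dimensional reasons; this is the classical argument carried out in \cite[Example 1.9.1]{fulton1984intersection}. For $G_0$, which is ungraded, the long exact $G$-theory sequence includes a potentially nontrivial boundary $G_1(U_i) \to G_0(X_{i-1})$ with $G_1(U_i) = \bigoplus_j K_1(\kk) \cong \bigoplus_j \kk^\times$, and this vanishing must be verified directly. The key input is that each generator of $K_1(\kk)$ is represented by an automorphism of the trivial line bundle on $\bA^{d_{ij}}$, and such automorphisms extend across the affine closure $\overline{C}_{ij}$; unwinding Quillen's construction of the boundary then forces the resulting class in $G_0(X_{i-1})$ to be trivial. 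With this cellular $K$-theory input in hand, the induction proceeds uniformly across all three theories.
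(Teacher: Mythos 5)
Your proposal is correct in outline, but it reaches the hard step — injectivity of $\iota_*$ for $E=G_0$ — by a genuinely different route than the paper. The paper handles $CH_\bullet$ and $H_\bullet$ by citation (Fulton, and Karpenko in the smooth case) and proves the $G_0$ case separately in \Cref{thm:G0-free-abelian-when-cellular} by inducting on the number of cells, peeling off one closed cell of minimal dimension at a time; the right-exact localization sequence plus freeness of $G_0(U)$ shows $G_0(X)\cong \ZZ^{\oplus(n-1)}\oplus A$ with $A$ a quotient of $\ZZ$, and the Baum--Fulton--MacPherson Riemann--Roch isomorphism $G_0(X)\otimes\QQ\cong CH_\bullet(X)\otimes\QQ$ (combined with the already-known rank of $CH_\bullet$ for cellular varieties) forces $A\cong\ZZ$. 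You instead induct on the filtration, remove a whole open stratum $U_i$ at once, and kill the obstruction by showing the boundary $G_1(U_i)\to G_0(X_{i-1})$ in Quillen's long exact localization sequence vanishes, because $G_1(U_i)\cong\bigoplus_j K_1(\kk)$ is generated by classes $\lambda\cdot[\cO_{C_{ij}}]$ that lift to $\lambda\cdot[\cO_{\overline{C}_{ij}}]\in G_1(X_i)$. Your route is self-contained for $G_0$ (it does not lean on the Chow/homology computation through Riemann--Roch) at the cost of invoking higher $G$-theory; the paper's route stays entirely in $G_0$ at the cost of importing BFM Riemann--Roch and making the $G_0$ statement logically dependent on the $CH_\bullet$/$H_\bullet$ cases. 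Both are valid.

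One point to tighten: the phrase ``such automorphisms extend across the affine closure $\overline{C}_{ij}$'' is imprecise — the closure need not be affine, may be singular, and what must be lifted is a $G_1$-class, not literally a bundle automorphism on $\overline{C}_{ij}$. The clean statement is that $G_*(Y)$ is a module over $K_*(\kk)$ for every $\kk$-variety $Y$, compatibly with pullback and pushforward, so $\lambda\cdot[\cO_{\overline{C}_{ij}}]\in G_1(X_i)$ restricts to $\lambda\cdot[\cO_{C_{ij}}]$; hence $G_1(X_i)\to G_1(U_i)$ is surjective and the boundary vanishes by exactness, with no need to unwind Quillen's construction of $\partial$. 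Also note that your appeal to ``purely dimensional reasons'' for $CH_\bullet$ is not quite how Fulton's argument runs (there is no long exact sequence with higher Chow terms in that setting; independence of the generators is deduced via the cycle map to Borel--Moore homology), though deferring to \cite[Example 19.1.11]{fulton1984intersection} as you and the paper both do is perfectly acceptable.
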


\begin{proof}
When $E$ (in the sense of Fulton's \textit{bivariant theories} \cite{fulton1984intersection}) is either $CH_\bullet$ or $H_\bullet$, the result is standard; see, for example, \cite[Example 19.1.11]{fulton1984intersection}. When $X$ is smooth, we may identify $E$ with its corresponding cohomology theory $K_0$, $CH^\bullet$, or $H^\bullet$. Karpenko \cite[Corollary 6.11]{Karpenko00cellular} proves that the geometric cohomology theory of a relative cellular space is a free module over the cohomology of the base. All three cohomology theories satisfy Karpenko's assumptions, and specializing to the case where the base is a point yields the desired statement. For the case when $E = G_0$ and $X$ is singular, see \Cref{thm:G0-free-abelian-when-cellular}.
\end{proof}

We are not aware of a reference that treats the $G_0$-version of \Cref{thm:cellular-finite-generation-of-cohomology} for singular cellular varieties specifically. For completeness, we include a proof in \Cref{subsec:key-lemmas-for-cellular} as \Cref{thm:G0-free-abelian-when-cellular}.

\begin{theorem}[Fulton \cite{fulton1984intersection} Example 19.1.11] \label{thm:cycle-class-map}
    Let $X$ be a cellular variety. The cycle class map $cl: CH_i(X) \to H_{2i}(X(\CC); \ZZ)$ is an isomorphism of graded modules. Furthermore, if $X$ is smooth, then $cl: CH^\bullet(X)\to H^\bullet(X(\CC); \ZZ)$ is a degree-doubling isomorphism of graded rings. 

\end{theorem}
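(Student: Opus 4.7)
The plan is to reduce this to \Cref{thm:cellular-finite-generation-of-cohomology}. That theorem identifies both $CH_i(X)$ and $H_{2i}(X(\CC);\ZZ)$ (interpreted as Borel--Moore homology when $X$ is not complete) as free abelian groups with bases indexed by the cell closures $[\overline{C}_{ij}]$ of complex dimension $i$. Since the cycle class map sends an algebraic fundamental cycle $[\overline{C}_{ij}]\in CH_{d_{ij}}(X)$ to the corresponding topological Borel--Moore fundamental class in $H_{2d_{ij}}(X(\CC);\ZZ)$, it carries one basis to the other and is therefore an isomorphism of graded abelian groups. In particular there is no need to reprove freeness separately; the work is concentrated in identifying the two canonical bases.

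For the ring statement in the smooth case, I would invoke the standard compatibility of $cl$ with pullbacks, pushforwards, and exterior products \cite[\S 19.1]{fulton1984intersection}. The intersection product on $CH^\bullet(X)$ and the cup product on $H^\bullet(X(\CC);\ZZ)$ are both obtained by pulling back the exterior product along the diagonal $\Delta: X \to X\times X$, so the compatibility of $cl$ with $\Delta^*$ and with exterior products promotes the abelian-group isomorphism to a ring isomorphism. The degree doubling $CH^i \to H^{2i}$ is built into the definition of $cl$, and the cohomological indexing matches the dimension indexing by setting $CH^p = CH_{\dim X - p}$ and using Poincar\'e duality on the topological side.

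The only subtle step I anticipate is verifying that $cl$ genuinely sends algebraic fundamental cycles to topological fundamental classes for every cell closure. This requires a canonical Borel--Moore fundamental class on a possibly singular subvariety together with a careful matching of orientations between the algebraic and topological sides. Both ingredients are standard and are worked out in \cite[Example 19.1.11]{fulton1984intersection}; once the identification of fundamental classes is in hand, the theorem follows formally from \Cref{thm:cellular-finite-generation-of-cohomology} combined with the functorial properties of $cl$.
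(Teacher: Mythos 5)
The paper does not actually prove this statement: it is quoted verbatim from Fulton \cite[Example 19.1.11]{fulton1984intersection}, so there is no internal proof to compare against. Judged on its own, your sketch of the ring-isomorphism half is fine --- compatibility of $cl$ with exterior products and with $\Delta^*$, plus Poincar\'e duality on a smooth (possibly open) variety to pass from Borel--Moore homology to cohomology, is exactly the standard route.

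The first half, however, has a circularity you should flag. You propose to deduce the isomorphism $CH_i(X)\to H_{2i}^{\mathrm{BM}}(X(\CC);\ZZ)$ from \Cref{thm:cellular-finite-generation-of-cohomology} by observing that $cl$ matches the two cell-closure bases. The problem is that the linear independence of the classes $[\overline{C}_{ij}]$ in $CH_\bullet(X)$ --- the $CH$ half of \Cref{thm:cellular-finite-generation-of-cohomology} --- is itself proved in the literature \emph{via} the cycle class map: Fulton's Example 1.9.1 only gives that the cell closures \emph{generate} $CH_\bullet(X)$, and Example 19.1.11 then establishes freeness and the isomorphism simultaneously, by induction over the cellular filtration using the localization sequence for Chow groups, the long exact sequence in Borel--Moore homology, and the vanishing of odd-degree Borel--Moore homology of the affine strata. (Indeed, the paper's own proof of \Cref{thm:cellular-finite-generation-of-cohomology} cites the very same Example 19.1.11 for the $CH_\bullet$ case.) So your reduction presupposes, in the singular or non-complete setting, exactly the independence statement that the cycle class isomorphism is normally used to supply. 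A non-circular argument should either run the filtration induction directly (surjectivity of $cl$ from generation, injectivity from the five lemma applied to the two localization sequences), or exhibit an independent proof that the cell closures are linearly independent in $CH_\bullet(X)$. The remaining point you isolate --- that $cl$ sends the algebraic fundamental cycle of a possibly singular cell closure to its Borel--Moore fundamental class --- is not really subtle; it is essentially the definition of the cycle class map once one knows an irreducible variety of complex dimension $d$ has a canonical class in $H_{2d}^{\mathrm{BM}}$.
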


\begin{example}[$K_0$ of projective spaces] \label{eg:K0(Pn)}
Since $\mathbb{P}^n$ has a cell decomposition $\PP^n = \coprod_{k = 0}^n \bA^k$ where $\overline{\bA^k} \cong \PP^k$, by \Cref{thm:cellular-finite-generation-of-cohomology} we have $G_0(\PP^n) \cong \ZZ\cdot \left\{[\cO_{\PP^0}], [\cO_{\PP^1}], \dots, [\cO_{\PP^n}]\right\}$. Since $\PP^n$ is smooth, there is an isomorphism $K_0(\PP^n) \cong G_0(\PP^n)$ as free abelian groups of rank $n+1$.

Next, we derive a presentation of $K_0(\PP^n)$ as a quotient ring with generator $\eta=1 - [\mathcal{O}_{\mathbb{P}^n}(1)]$. Let $S_n$ be the ring $\kk[x_0, x_1, \dots, x_n]$. We think of $\PP^n$ as $\operatorname{Proj} S_n$, and the cell closure $\PP^i$ naturally included as $\operatorname{Proj}S_i$ for $0\leq i\leq n$. The Koszul complex associated to the regular sequence $x_{i+1}, x_{i+2}, \dots, x_n$ in $S_n$ 
induces a resolution of $\cO_{\PP^i}$ 
$$
0 \rightarrow \cO_{\PP^n}(-n+i)^{\oplus \binom{n-i}{n-i}} \rightarrow \cdots \rightarrow \cO_{\PP^n}(-2)^{\oplus\binom{n-i}{2}} \rightarrow \cO_{\PP^n}(-1)^{\oplus\binom{n-i}{1}} \rightarrow \cO_{\PP^n} \rightarrow \cO_{\PP^i} \rightarrow 0
$$
given by the twisting sheaves $\cO_{\PP^n}(j)$. Using the fact that $[\cO_{\PP^n}(j)] = [\cO_{\PP^n}(1)]^j$, the Koszul resolution after dualizing induces the relation \begin{gather*}
 0 =    (-1)^{n-i}[\cO_{\PP^n}(1)]^{n-i} + \cdots - (n-i)[\cO_{\PP^n}(1)] + [\cO_{\PP^n}] + [\cO_{\PP^i}]  \\
    \iff [\cO_{\PP^i}] = (1-[\cO_{\PP^n}(1)])^{n-i} = \eta^{n-i}
\end{gather*}
in $K_0(\PP^n)$. Therefore, we call $\eta$ the \textit{hyperplane class}. In particular, the Koszul resolution of the structure sheaf $\cO_{\PP^n}$ gives the relation $\eta^{n+1} = 0$ in $K_0(\PP^n)$. Thus, the natural ring homomorphism $\varphi\colon \mathbb{Z}[x]\to K_0(\mathbb{P}^n)$ mapping $x$ to $ \eta = 1 - [\mathcal{O}_{\PP^n}(1)]$ factors through $\mathbb{Z}[x]/(x^{n+1})$. Since both are free abelian groups of rank $n+1$, we conclude that 
\begin{equation*}
    K_0(\PP^n) \cong \ZZ[\eta]/(\eta^{n+1}).
\end{equation*}
Furthermore, if we set $x_i:= c_1^K(\cL_i)$ where $\cL_i$ is the pullback of $\cO_{\PP^k}(1)$ along the $i^\text{th}$ coordinate projection on $\Pnk$, then the projective bundle formula in $K$-theory implies that \begin{equation*}
    K_0\left(\Pnk\right) \cong \frac{\ZZ[x_1, \dots, x_n]}{(x_1^k, \dots, x_n^k)},
\end{equation*}
and we denote the right-hand side by $S_{n,k}$. We will use this presentation in \Cref{sec:K0-of-Xnk} to describe $K_0(X_{n,k})$ explicitly.

\end{example}

\begin{example}[$K_0$ of the flag variety] \label{eg:K0(Fln)} Recall from \Cref{sec:intro} that there is an isomorphism \eqref{eqn:isos-of-Fln}
    \begin{equation} \label{eq: K0 Fln}
        K_0(Fl_n) \cong \frac{\ZZ[x_1,\dots, x_n]}{(e_1, e_2, \dots, e_n)},
    \end{equation}
    where $x_i =  c_1^K(\cL_i^\vee)$ is the $K$-theoretic first Chern class of the $i^\text{th}$ dual tautological line bundle over $Fl_n$. One way of proving the above isomorphism is to build $Fl_n$ as an iterated projective bundle and then use a rank argument based on the free-abelian nature of $K_0(Fl_n)$ implied by \Cref{thm:G0-free-abelian-when-cellular} \cite{Karoubi1978}. 
\end{example}

\subsection{Schubert Polynomials and Grothendieck Polynomials}

In this section, we state useful facts about Schubert and Grothendieck polynomials, degeneracy loci formulae, and the combinatorics of pipe dreams.

\label{subsec:schubert-polynomials}

Let $\mathcal{S}_n$ be the symmetric group on the letters $[n]$. A permutation $w\in \mathcal{S}_n$ is a bijection $w: [n]\to [n]$. The \textit{one-line notation} of $w$ is the ordered list $[w_1, w_2, \dots, w_n]$, where $w_i$ is the value $w(i)$. We may abbreviate the one-line notation as $w_1w_2\dots w_n$. Let $s_i$ denote the \textit{simple transposition} $(i\; i+1)$ which swaps the letters $i$ and $i+1$. Let $w_0\in \mathcal{S}_n$ be the permutation $[n, n-1, \ldots, 1]$. 

An \textit{inversion} of $w\in \mathcal{S}_n$ is an ordered tuple $(i,j)$ such that $1\leq i<j\leq n$ and $w_i > w_j$. Let $\operatorname{Inv}(w)$ be the set of inversions of $w$. The \textit{Lehmer code} of $w$ is the ordered list $c(w)$ where $c(w)_i = |\{j \in[n] \mid(i, j) \in \operatorname{Inv}(w)\}|$. The \textit{length} $\ell(w)$ of a permutation is the minimal number $r$ such that $w$ can be written as a product of $r$ simple transpositions. Equivalently, $\ell(w) = |\operatorname{Inv}(w)|$. 

The symmetric group $\mathcal{S}_n$ naturally acts on the polynomial ring $\ZZ[x_1, \dots, x_n]$ by permuting the $n$ variables. For any simple transposition $s_i$,  let the \textit{divided difference operator} $\partial_i$ be the operator on $\ZZ[x_1, \dots, x_n]$ defined by \begin{equation}\label{eq:divided-difference-operators}
    \partial_i f=\frac{f-s_i \cdot f}{x_i-x_{i+1}} .
\end{equation}

 \begin{definition}\label{def:schubert-poly}
    The \textit{Schubert polynomial} $\mathfrak{S}_w\in \ZZ[x_1, \dots, x_n]$ is defined recursively by Lascoux--Sch\"utzenberger \cite{lascoux1982polynomes} as
\begin{equation}
\begin{split}
    \mathfrak{S}_w(x_1, \dots, x_n)= \begin{cases}x_1^{n-1} x_2^{n-2} \cdots x_{n-1} & \text { if } w=w_0 \\ \partial_i \mathfrak{S}_{w s_i}(x_1, \dots, x_n) & \text { if } w(i)<w(i+1).\end{cases}
    \end{split}
\end{equation}
 The \textit{double Schubert polynomial} $\mathfrak{S}_w$ is defined recursively as 
    \begin{equation}
\begin{split}
  \mathfrak{S}_w\left(x_1, \ldots, x_n \mid  y_1, \ldots, y_n\right)= \begin{cases}\prod_{i+j \leq n}\left(x_i-y_j\right) & \text { if } w=w_0, \\ \partial_i \mathfrak{S}_{w s_i}(x_1, \dots, x_n \mid y_1, \dots, y_n) & \text { if } w(i)<w(i+1),\end{cases}
\end{split}
\end{equation}
where $\partial_i$ acts only on the $x$-variables.
\end{definition}

The \textit{rank table} of a permutation $w\in \mathcal{S}_n$ is the $n\times n$ array where $\operatorname{rk}(w)[i,j] = \# \{k \in [j]\mid w_k\leq i\}$. With respect to a fixed complete flag $E_\bullet \in Fl_n$, the \textit{Schubert cell} $C_w(E_\bullet)$ is the open subset of $Fl_n$ defined by the rank conditions $$C_w(E_\bullet):=\left\{F_{\bullet} \in Fl(n) \mid \operatorname{dim}\left(E_i \cap F_j\right)=\operatorname{rk}(w)[i, j] \text{ for all }1 \leq i, j \leq n\right\}.$$ Then, the \textit{Schubert variety} $X_w(E_\bullet)$ is the closure $\overline{C}_w(E_\bullet)$ inside the flag variety. Furthermore, the \textit{Schubert class} $[X_w(E_\bullet)]\in H^\bullet(Fl_n(\CC);\ZZ)$ does not depend on the choice of $E_\bullet$. 
\begin{theorem}[Lascoux--Sch\"utzenberger \cite{lascoux1982polynomes}]
    For $w\in \mathcal{S}_n$, the isomorphism $H^\bullet(Fl_n(\CC); \ZZ)\cong R_n$ sends the Schubert class $[X_w]$ to the Schubert polynomial $\mathfrak{S}_w(x_1, \dots, x_n)$.
\end{theorem}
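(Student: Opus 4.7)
The plan is to match the two sides by downward induction on $\ell(w)$, following the inductive structure of \Cref{def:schubert-poly}. Under the Borel isomorphism identifying $x_i$ with $c_1(\cL_i^\vee)$ for the tautological line bundle $\cL_i = E_i/E_{i-1}$ on $Fl_n$, the task reduces to verifying the two geometric properties that mirror the algebraic recursion for $\mathfrak{S}_w$:
\[
  [X_{w_0}] \;=\; x_1^{n-1}x_2^{n-2}\cdots x_{n-1}, \qquad \partial_i[X_w] \;=\; \begin{cases} [X_{ws_i}] & \text{if } w(i) > w(i+1), \\ 0 & \text{if } w(i) < w(i+1). \end{cases}
\]
Since every $w \in \mathcal{S}_n$ is reachable from $w_0$ by a chain of length-decreasing right multiplications by simple transpositions, and since $\mathfrak{S}_{w_0}$ together with the recursion $\partial_i \mathfrak{S}_v = \mathfrak{S}_{vs_i}$ (valid whenever $v(i) > v(i+1)$) uniquely determines every $\mathfrak{S}_w$, these two geometric identities force $[X_w] = \mathfrak{S}_w(x_1,\dots,x_n)$.

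I would first pin down the top class. The Schubert variety $X_{w_0}$ is the single flag in general position with respect to $E_\bullet$, hence a point of complex codimension $\binom{n}{2} = \ell(w_0)$. The monomial $x_1^{n-1}x_2^{n-2}\cdots x_{n-1}$ lies in the same top degree of $R_n$, and the identity $\int_{Fl_n} x_1^{n-1}x_2^{n-2}\cdots x_{n-1} = 1$ - computed by iterated push-pull along the tower of projective bundles $Fl_n \to Fl_{n-1} \to \cdots \to \PP^{n-1} \to \mathrm{pt}$ - identifies this monomial with the class of a point, which is $[X_{w_0}]$.

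For the divided-difference recursion, I would use the smooth $\PP^1$-bundle $\pi_i: Fl_n \to Fl_n^{(i)}$ obtained by forgetting the $i^{\text{th}}$ subspace in a flag. A standard BGG/Demazure computation identifies $\partial_i$ on $H^\bullet(Fl_n(\CC);\ZZ)$ with the composite $\pi_i^\ast (\pi_i)_\ast$. Analyzing $\pi_i|_{X_w}$ splits into two cases: when $w(i) > w(i+1)$, the restriction is birational onto $\pi_i(X_w) = \pi_i(X_{ws_i})$, and pulling back the pushforward recovers the saturated preimage $[X_{ws_i}]$; when $w(i) < w(i+1)$, the Schubert variety $X_w$ is $\pi_i$-saturated (it contains every $\PP^1$-fiber it meets), so $\pi_i|_{X_w}$ has generic fibers of positive dimension and $(\pi_i)_\ast[X_w]$ vanishes for dimensional reasons.

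The main obstacle is the fiber-wise identification of $\pi_i(X_w)$ as the partial Schubert variety indexed by the shorter permutation, together with the determination that the restricted projection is generically of degree one in the relevant case; this is the heart of the classical BGG/Demazure argument and depends on a careful study of how Schubert cells interact with the minimal parabolic projection. Once both geometric inputs are in place, the theorem follows by descending induction starting from $w_0$, retracing the recursion of \Cref{def:schubert-poly} one simple transposition at a time. For the detailed geometric verification I would defer to the standard treatments of Schubert calculus rather than reproduce the $\PP^1$-bundle analysis here.
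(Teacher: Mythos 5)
The paper does not prove this statement; it is quoted as a classical result of Lascoux--Sch\"utzenberger (resting on the earlier Bernstein--Gelfand--Gelfand/Demazure computation), and your sketch is exactly that standard argument: pin down the point class $[X_{w_0}]=x_1^{n-1}\cdots x_{n-1}$ via push-pull along the tower of projective bundles, identify $\partial_i$ with $\pi_i^\ast(\pi_i)_\ast$ for the minimal parabolic projection, check the birational-versus-saturated dichotomy for $\pi_i|_{X_w}$, and conclude by descending induction on $\ell(w)$. The outline is correct, including the dimension count showing $(\pi_i)_\ast[X_w]=0$ in the saturated case. One caveat worth noting: with the rank-table convention as literally written in the paper ($\dim(E_i\cap F_j)=\operatorname{rk}(w)[i,j]$), the cell $C_{w_0}$ is the big open cell and $C_{\mathrm{id}}$ is the point, so $X_w$ would have dimension rather than codimension $\ell(w)$; for the theorem as stated (where $[X_w]$ must live in degree $\ell(w)=\deg\mathfrak{S}_w$) one needs the opposite-flag (codimension) convention, which is the one you implicitly and correctly adopt when you declare $X_{w_0}$ to be a point.
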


One family of permutations that becomes important in the proofs of \Cref{thm:grothendieck-poly-of-word} and \Cref{thm:main} is $v^{(i)}:=12\dots \hat{i}\dots ni \in \mathcal{S}_n$, where $\widehat{i}$ means removing the letter $i$. For example, if $n=5$, then $v^{(3)} = 12453$. Their Schubert polynomials have a particularly nice form.

\begin{example}\label{eg:grassmannian-schubert}
    Let $n = 5$. The permutation $v^{(4)} = 12354$ has Schubert polynomial  $$\mathfrak{S}_{12354} =  x_1 + x_2 + x_3 + x_4 = e_1(x_1,\dots, x_4) $$
    in the cohomology ring $H^\bullet(Fl_5)$. In general, $v^{(i)}\in\mathcal{S}_n$ has Schubert polynomial \[\mathfrak{S}_{v^{(i)}} = e_{n-i}(x_1, \dots, x_{n-1})\in H^\bullet(Fl_n).\]
\end{example}
\label{subsec:grothendieck-polys}

\begin{definition}\label{def:grothendieck-poly}
Let the \textit{isobaric divided difference operator} be $\pi_i:=\partial_i\left(1-x_{i+1}\right)$, where $\partial_i$ is the divided difference operator in \Cref{def:schubert-poly}. For $w\in \mathcal{S}_n$, Lascoux--Sch{\"u}tzenberger \cite{lascoux1982polynomes} defined the \textit{Grothendieck polynomial}  recursively as 
\begin{equation}
\begin{split}
    \mathfrak{G}_w(x_1, \dots, x_n)= \begin{cases}x_1^{n-1} x_2^{n-2} \cdots x_{n-1} & \text { if } w=[n, n-1, \ldots, 1], \\ \pi_i \mathfrak{G}_{w s_i}(x_1, \dots, x_n) & \text { if } w(i)<w(i+1).\end{cases}
    \end{split}
\end{equation}
The \textit{double Grothendieck polynomial} is defined recursively as 
\begin{equation}
\begin{split}
   \mathfrak{G}_w\left(x_1, \ldots, x_n \mid  y_1, \ldots, y_n\right):= \begin{cases}\prod_{i+j \leq n}\left(x_i+y_j- x_i y_j\right) & \text { if } w=[n, n-1, \ldots, 1], \\ \pi_i \mathfrak{G}_{w s_i}(x_1, \dots, x_n \mid y_1,\dots,y_n) & \text { if } w(i)<w(i+1),\end{cases}
    \end{split}
\end{equation}
where $\pi_i$ again acts only on the $x$-variables.
  
\end{definition}

\begin{theorem}[Lascoux--Sch\"utzenberger \cite{LascouxSchutzenberger1982Grothendieck}]
    For $w\in \mathcal{S}_n$, the isomorphism $K_0(Fl_n)\cong R_n$ sends the $K$-theoretic Schubert class $[\cO_{X_w}]$ to the Grothendieck polynomial $\mathfrak{G}_w(x_1, \dots, x_n)$.
\end{theorem}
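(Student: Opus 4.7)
The plan is to proceed by downward induction on the length $\ell(w)$, mirroring the recursive definition of $\mathfrak{G}_w$ in \Cref{def:grothendieck-poly} and realizing the isobaric divided difference operator $\pi_i$ geometrically via a $\PP^1$-bundle on the flag variety. For the base case, $X_{w_0}$ is a reduced point of $Fl_n$, and I would compute $[\cO_{X_{w_0}}]$ in the presentation \eqref{eq: K0 Fln} by resolving the structure sheaf of the point with a Koszul complex built from a regular sequence of sections of the dual tautological line bundles $\cL_i^\vee$; the resulting alternating sum of exterior powers collapses to the monomial $x_1^{n-1} x_2^{n-2} \cdots x_{n-1}$, matching the base-case value of $\mathfrak{G}_{w_0}$.

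For the inductive step, let $p_i \colon Fl_n \to Fl_n^{(i)}$ denote the smooth $\PP^1$-bundle that forgets the $i$-th step of the flag. I would establish two geometric facts when $w(i) < w(i+1)$: (a) the restriction $p_i|_{X_{ws_i}}$ is birational onto its image, and $X_w = p_i^{-1}(p_i(X_{ws_i}))$ scheme-theoretically; (b) Schubert varieties have rational singularities by Ramanathan's theorem. Together with flat base change and the projection formula, these yield the identity
\begin{equation*}
[\cO_{X_w}] \;=\; p_i^*\, (p_i)_* [\cO_{X_{ws_i}}]
\end{equation*}
in $K_0(Fl_n)$.

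The remaining step is to identify the operator $p_i^*(p_i)_*$ on $K_0(Fl_n)$ with the polynomial operator $\pi_i = \partial_i(1 - x_{i+1})$ under the presentation \eqref{eq: K0 Fln}. I would deduce this by applying the relative Euler sequence of the $\PP^1$-bundle $p_i$ to compute pushforwards of the generators $x_j$; the $(1 - x_{i+1})$ factor emerges naturally as the $K$-theoretic first Chern class of the relative tautological line bundle on the fiber, while the divided difference piece $\partial_i$ encodes the symmetrization of $x_i$ and $x_{i+1}$ along the fiber direction. Once this identification is in hand, the geometric recursion matches the defining recursion of $\mathfrak{G}_w$ in \Cref{def:grothendieck-poly}, and downward induction on $\ell(w)$ from the base case $w = w_0$ completes the argument.

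The main obstacle is the matching of the geometric operator $p_i^*(p_i)_*$ with $\pi_i$ at the polynomial level: this requires careful bookkeeping of how the tautological bundles $\cL_1, \dots, \cL_n$ behave under $p_i$ (in particular, that only $\cL_i$ and $\cL_{i+1}$ are affected while the remaining $\cL_j$ pull back trivially) and of how the relative dualizing sheaf of $p_i$ contributes the $(1 - x_{i+1})$ twist distinguishing the isobaric operator $\pi_i$ from the ordinary divided difference $\partial_i$. The scheme-theoretic identity $X_w = p_i^{-1}(p_i(X_{ws_i}))$, together with the Ramanathan rational-singularity input used to ensure $(p_i)_*[\cO_{X_{ws_i}}] = [\cO_{p_i(X_{ws_i})}]$ without higher direct-image corrections, are the other key technical pieces that make the derived pushforward calculation collapse to the expected formula.
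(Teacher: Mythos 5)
The paper does not prove this statement: it is quoted as a classical theorem of Lascoux--Sch\"utzenberger and used only as background, so there is no in-paper argument to compare against. Your proposal is essentially the standard proof (Demazure's desingularization argument in its $K$-theoretic form): realize the isobaric divided difference $\pi_i$ as the push--pull operator $p_i^*(p_i)_*$ along the $\PP^1$-fibration forgetting the $i$-th step of the flag, compute the base case at the point $X_{w_0}$ by a Koszul resolution, and run the defining recursion of \Cref{def:grothendieck-poly} downward in length. The geometric inputs you list are the correct ones and all hold: for $w(i)<w(i+1)$ the map $p_i|_{X_{ws_i}}$ is birational onto its (normal) image, $X_w$ is the scheme-theoretic preimage of that image, and normality together with rational singularities of Schubert varieties gives $R(p_i)_*\cO_{X_{ws_i}}=\cO_{p_i(X_{ws_i})}$ with no higher corrections, whence $[\cO_{X_w}]=p_i^*(p_i)_*[\cO_{X_{ws_i}}]$ by flatness of $p_i$. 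Two points deserve explicit care in a full write-up. First, matching $p_i^*(p_i)_*$ with $\pi_i=\partial_i(1-x_{i+1})$ rather than one of its conjugate variants depends on the sign convention $x_j=c_1^K(\cL_j^\vee)=1-[\cL_j]$ of \eqref{eq: K0 Fln} and on which of $\cL_i,\cL_{i+1}$ restricts to $\cO(-1)$ on the fibers; the case $Fl_2=\PP^1$, where $\pi_1(x_1)=1$ and $\chi(\cO_{\PP^1})-\chi(\cO_{\PP^1}(-1))=1$, confirms the stated convention. Second, the base case requires that the point $X_{w_0}(E_\bullet)$ be the zero scheme of a regular section of a direct sum of line bundles built from the $\cL_i^\vee$ and the fixed flag, so that the Koszul complex yields $[\cO_{X_{w_0}}]=\prod_i x_i^{\,n-i}$; this is the $y=0$ specialization of the $w_0$ case of \Cref{thm:Grothendieck-degeneracy-loci-formula}. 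With those conventions pinned down, your outline is a complete and correct proof strategy.
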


\begin{example}\label{eg:grassmannian-grothendieck}
   The permutation $v^{(4)} = 12354\in\mathcal{S}_5$ has Grothendieck polynomial  $$\begin{aligned}
        \mathfrak{G}_{12354} &=  x_1 + x_2 + x_3 + x_4 - x_1 x_2  - x_1 x_3 - x_2 x_3 - x_3 x_4 - x_1 x_4 - x_2 x_4 \\ & + x_1x_2x_3+ x_1 x_2 x_4  + x_1 x_3 x_4 + x_2 x_3 x_4 - x_1 x_2 x_3 x_4  \\ & = e_1(x_1,\dots, x_4) - e_2(x_1,\dots, x_4) + e_3(x_1,\dots, x_4) - e_4(x_1,\dots, x_4). 
    \end{aligned}$$
Comparing with \Cref{eg:grassmannian-schubert}, we see that the lowest degree component of $\mathfrak{G}_{12354}$ is $\mathfrak{S}_{12354}$, and the higher degree terms form an alternating sum of the other elementary symmetric polynomials. In general, the Grothendieck polynomial of the permutation $v^{(i)}$ can be computed combinatorially as in \Cref{lem:G-of-vertical-strip}. For any permutation $w$, the lowest degree component of $\mathfrak{G}_w$ always agrees with the Schubert polynomial $\mathfrak{S}_w$.
\end{example}

\begin{lemma}
  \label{lem:G-of-vertical-strip}
    The Grothendieck polynomial for the permutation $v^{(i)} =[12 \ldots \hat{i} \ldots n i] \in \mathcal{S}_{n}$ is the alternating sum of elementary symmetric polynomials
    $$\begin{aligned}
        \mathfrak{G}_{v^{(i)}} & = e_{n+1-i}-g_{n+2-i}e_{n+2-i} + \cdots + (-1)^{i+1}g_{n}e_n\\
        & = e_{n+1-i} + \sum_{n+1-i<j\leq n} (-1)^{i+j-n-1}g_je_j,
    \end{aligned}$$
    in the variables $x_1, \dots, x_{n-1}$, for some non-negative coefficients $g_j$. 
\end{lemma}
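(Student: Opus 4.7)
The plan is to identify $v^{(i)}$ as a Grassmannian permutation whose associated shape is a single column of height $n-i$, and then apply the classical closed-form expansion of the Grothendieck polynomial of a single-column Grassmannian as an alternating sum of elementary symmetric polynomials.

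First I would verify directly that $v^{(i)} = [1, 2, \ldots, i-1, i+1, \ldots, n, i]$ has a unique descent at position $n-1$, so $v^{(i)}$ is Grassmannian. Computing its Rothe diagram, the boxes are precisely $\{(a, i) : i \leq a \leq n-1\}$: a single vertical strip of length $n-i$ sitting in column $i$. After straightening, the associated partition is $\lambda = (1^{n-i})$, and for any Grassmannian permutation $w$ with descent at position $d$ and shape $\lambda$, one has $\mathfrak{G}_{w}(x_1, \ldots, x_{n-1}) = G_\lambda(x_1, \ldots, x_d)$, where $G_\lambda$ denotes the symmetric Grothendieck polynomial.

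The key ingredient is the classical single-column formula
$$G_{(1^r)}(x_1, \ldots, x_N) = \sum_{k=r}^{N} (-1)^{k-r}\binom{k-1}{r-1}\, e_k(x_1, \ldots, x_N),$$
specialized to $r = n-i$ and $N = n-1$. This matches the alternating-sum form of the claim with non-negative integer coefficients $g_k = \binom{k-1}{n-i-1}$. To prove this single-column identity in a self-contained way, one enumerates set-valued column-strict tableaux of shape $(1^r)$: such a tableau is a chain $S_1 < S_2 < \cdots < S_r$ of nonempty subsets of $[N]$, where the inequality means the maximum of $S_j$ is strictly less than the minimum of $S_{j+1}$. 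Fixing the support $S = \bigcup_j S_j$ of size $k \geq r$, the number of ways to partition $S$ into $r$ ordered nonempty ``chains'' is $\binom{k-1}{r-1}$, each contributing the weight $\prod_{a \in S} x_a$; summing with the $K$-theoretic sign $(-1)^{k-r}$ consolidates into $(-1)^{k-r}\binom{k-1}{r-1}\, e_k$.

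The main obstacle is the single-column identity itself. The Grassmannian/column-shape reduction and the subsequent bookkeeping are routine, but establishing the expansion of $G_{(1^r)}$ cleanly requires either invoking set-valued tableau technology or carrying out a careful induction using the isobaric divided difference recursion of \Cref{def:grothendieck-poly}, walking down from $w_0 \in \mathcal{S}_n$ toward $v^{(i)}$ while tracking how each $\pi_j$ transforms the intermediate symmetric polynomials. Once this identity is established, the alternating-sign pattern and the non-negativity of the coefficients are immediate.
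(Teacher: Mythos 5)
Your proposal is correct and lands in essentially the same place as the paper's argument, but by a somewhat different route. The paper also begins by recognizing $v^{(i)}$ as Grassmannian with a single-column shape, but then invokes Lenart's theorem expanding $\mathfrak{G}_w$ for Grassmannian $w$ as $\sum_{\lambda\subseteq\mu\subseteq\hat\lambda}(-1)^{|\mu|-|\lambda|}g_{\lambda\mu}s_\mu$, and simply observes that for a column shape the interval $[\lambda,\hat\lambda]$ consists only of columns, so every $s_\mu$ appearing is an $e_j$; the coefficients $g_j$ are left as unspecified non-negative integers. You instead use the set-valued-tableau description of the single-column symmetric Grothendieck polynomial and enumerate the tableaux directly (a chain of $r$ consecutive blocks partitioning a $k$-subset, counted by compositions), which yields the sharper closed form $g_k=\binom{k-1}{r-1}$. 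Both routes cite essentially equivalent black boxes (Lenart's transition coefficients versus Buch's set-valued tableau formula), so the difference is one of explicitness rather than substance; your version buys the exact coefficients, which the lemma does not actually need.

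One point worth flagging: your careful computation of the Rothe diagram gives the shape $(1^{n-i})$ and hence leading term $e_{n-i}(x_1,\dots,x_{n-1})$, whereas the lemma (and the paper's proof, which asserts the shape is $(1^{n+1-i})$) claims the leading term is $e_{n+1-i}$. Your indexing is the correct one: $\ell(v^{(i)})=n-i$, and the paper's own \Cref{eg:grassmannian-schubert} and \Cref{eg:grassmannian-grothendieck} (where $n=5$, $i=4$ and $\mathfrak{G}_{12354}=e_1-e_2+e_3-e_4$, with leading term $e_{n-i}=e_1$, not $e_{n+1-i}=e_2$) confirm this. So your argument proves the correctly re-indexed statement; the discrepancy is an off-by-one in the lemma as printed, not a gap in your proof. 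Note also that with only the variables $x_1,\dots,x_{n-1}$ present, the top term $e_n$ in the lemma's displayed sum vanishes identically, which is a further symptom of the same shift.
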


\begin{proof}
Let $w$ be a Grassmannian permutation, which means $w$ contains at most one descent. It is well-known that Grassmannian permutations with descent position $i$ can be canonically identified with partitions fitting into the $i\times (n-i)$ box, which forms a poset called Young's lattice, where the ordering is given by containment of Ferres diagrams. For a partition $\lambda\subseteq [i]\times [n-i]$, let $\widehat{\lambda}$ be the unique maximal partition in Young's lattice with $i$ rows, obtainable from $\lambda$ by adding at most $j-1$ boxes to its $j^\text{th}$ row for $2\leq j\leq i$. Lenart \cite[Theorem 2.2]{Lenart2000} proved that the Grothendieck polynomial $\mathfrak{G}_w$ expands as a sum of Schur polynomials 
$$\mathfrak{G}_w=\sum_{\lambda \subseteq \mu \subseteq \hat{\lambda}}(-1)^{|\mu|-|\lambda|} g_{\lambda \mu} s_\mu ,$$
where the coefficients $g_{\lambda\mu}$ are non-negative integers corresponding to certain combinatorial counts.

The diagram of the permutation $v^{(i)}$ corresponds to the partition $\lambda^{(i)} = (1^{n+1-i})$. Then, $\widehat{\lambda^{(i)}}$ is the vertical strip $(1^n)$, and the interval $[\lambda^{(i)}, \widehat{\lambda^{(i)}}]$ consists of all vertical strips $\mu^{(j)} = (1^{j})$ for $n+1-i \leq j \leq n$. Therefore, $\mathfrak{G}_{v^{(i)}}$ is an alternating sum of elementary symmetric polynomials $e_{n-i+1}, \dots, e_n$, in the form $  \mathfrak{G}_{v^{(i)}} =  \sum_{n+1-i\leq j\leq n} (-1)^{i+j-n-1}g_je_j$. In particular, it follows from the definition of $g_{\lambda\mu}$ that the leading coefficient $g_{n-i+1}$ is $1$. 
\end{proof}

\vspace{1em}
\medskip\paragraph{\textsc{Degeneracy Loci Formulae}} A \textit{flagged vector bundle} is a vector bundle $\mathcal{E}_\bullet\to X$ with the continuous choice of a complete flag in each fiber. A map of flagged vector bundles $f: \mathcal E_\bullet \to \mathcal  F_\bullet$ is the data of an increasing chain of inclusions and a descending chain of quotients
\[
\quad 0 = \mathcal  E_0 \hookrightarrow\mathcal  E_1 \hookrightarrow \cdots \hookrightarrow \mathcal E_n \overset{f}{\longrightarrow} \mathcal F_k \twoheadrightarrow \mathcal F_{k-1} \twoheadrightarrow \cdots \twoheadrightarrow \mathcal F_0 = 0,
\]
where each $\mathcal E_i$ is a subbundle of rank $i$ and $\mathcal F_j$ a subbundle of rank $j$. For each pair $(i,j)$, let $f^{ij}$ be the restriction of $f$ from $\mathcal E_i$ to $\mathcal F_j$. For a point $x$ in the base space $X$, the map $f_x^{ij}$ on the fibers is a linear map of vector spaces $(\mathcal E_i)_x\to (\mathcal E_j)_x$. The rank table of $f$ at $x$ is the $n\times k$ array with entries $\operatorname{rk}_{f_x}[i,j] = \operatorname{rank} f_x^{ij}$.

Now fix a permutation $w \in \mathcal S_n$. Recall the rank table of $w$ is the $n\times n$ array with entries
\[
\operatorname{rk}(w)[i,j] := \#\{\ell \in \{1,\dots,j\} \mid w(\ell) \le i\}.
\]
Given a morphism of flagged bundles $f:\mathcal E_\bullet \to \mathcal F_\bullet$, the \emph{flagged degeneracy locus} of $f$ with respect to $w$ is the closed subset
\[
\Omega_w(f)
= \bigl\{\, x\in X \;\big|\; \operatorname{rank} f_x^{ij} \le \operatorname{rk}(w)[i,j] \text{ for all } 1\le i,j\le n \,\bigr\}.
\]

Associated to the flags $\mathcal E_\bullet$ and $\mathcal F_\bullet$ are natural line bundles.  For each $i$ the quotient $\mathcal E_i/\mathcal E_{i-1}$ is a line bundle on $X$, and for each $j$ the kernel of the projection $\mathcal F_j \to \mathcal F_{j-1}$ is also a line bundle.  We write
\[
b_j := c_1(\mathcal E_j/\mathcal E_{j-1}), \qquad
a_i := c_1(\ker(\mathcal F_i \to \mathcal F_{i-1}))
\]
for their first Chern classes in the Chow ring, and
\[
b_j^K := c_1^K(\mathcal E_j/\mathcal E_{j-1}), \qquad
a_i^K := c_1^K(\ker(\mathcal F_i \to \mathcal F_{i-1}))
\]
for the corresponding $K$-theoretic first Chern classes in $K_0(X)$. The classical degeneracy loci formula of Fulton identifies the Chow class of $\Omega_w(f)$ with a double Schubert polynomial.

\begin{theorem}[Fulton {\cite[Thm.~8.2]{Fulton1991degeneracy}}]
\label{thm:Schubert-degeneracy-loci-formula}
If $\Omega_w(f)$ has codimension equal to the length $\ell(w)$ of $w$, then its class in $CH^\bullet(X)$ is represented by the double Schubert polynomial:
\[
[\Omega_w(f)] = \mathfrak{S}_w(a_1,\dots,a_n \mid b_1,\dots,b_n).
\]
\end{theorem}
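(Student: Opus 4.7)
The plan is to reduce to a universal situation and then argue by induction on the length of $w$ using the divided-difference recursion of \Cref{def:schubert-poly}. First I would construct a universal model for morphisms of flagged vector bundles: over a fixed base (for example, a product of two flag bundles $Fl(\mathcal V) \times_X Fl(\mathcal V)$ equipped with its tautological flags), there is a universal map of flagged bundles whose flagged degeneracy loci $\Omega_w$ parametrize all such loci up to base change. Since Chern classes and degeneracy loci are compatible with pullback, it suffices to prove the formula in this universal setting, where $\Omega_w$ is a Schubert variety in a flag bundle and hence has well-understood algebro-geometric structure.

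Next I would establish the base case $w = w_0$. Here the rank conditions collapse to the single condition $\operatorname{rank}(f^{ij}) \le 0$ whenever $i + j \le n$, i.e.\ that certain compositions $\mathcal E_i \to \mathcal F_{n-i}$ vanish. The locus where all these required vanishings hold is the zero scheme of a section of the bundle $\bigoplus_{i+j \le n} \operatorname{Hom}(\mathcal E_i/\mathcal E_{i-1}, \mathcal F_j/\mathcal F_{j-1})$, and if the codimension equals $\ell(w_0) = \binom{n}{2}$ then its class is the top Chern class. By the splitting principle and the Whitney sum formula in $CH^\bullet(X)$, this top Chern class computes to $\prod_{i+j \le n}(a_i - b_j)$, matching the definition of $\mathfrak S_{w_0}(a_1,\dots,a_n \mid b_1,\dots,b_n)$.

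For the inductive step, I would show that if $\ell(ws_i) = \ell(w) + 1$, then
\[
[\Omega_w(f)] = \partial_i\,[\Omega_{ws_i}(f)],
\]
where $\partial_i$ acts on the $a$-variables. Geometrically, this comes from considering the $\PP^1$-bundle obtained by forgetting the step $\mathcal F_i \subset \mathcal F_{i+1}$ in the target flag: the Schubert variety $\Omega_{ws_i}$ is birational to the preimage of $\Omega_w$ under the appropriate forgetful map, and pushforward along a $\PP^1$-bundle realizes the divided-difference operator on Chern classes. Combined with the base case, this gives the desired formula by the recursion in \Cref{def:schubert-poly}.

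The main obstacle is the inductive geometric step: one must show that the divided-difference identity holds on the level of classes even when $\Omega_{ws_i}$ is singular and $\Omega_w$ is not smooth, which requires either a careful analysis of the fibers of the forgetful $\PP^1$-bundle (showing generic fibers are $\PP^1$ and special fibers are points of the right codimension) or an appeal to Kempf--Laksov-type resolutions of Schubert varieties. Once that identity is in hand, the rest is bookkeeping against the recursion, and transversality of $f$ (the codimension hypothesis) ensures that no excess-intersection corrections are needed.
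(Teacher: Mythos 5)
This statement is quoted in the paper as a known background result, cited to Fulton's 1991 paper, and the paper gives no proof of it; so there is nothing internal to compare against. Your sketch is, in essence, a reconstruction of Fulton's original argument (reduction to the universal flag-bundle situation, base case $w_0$ via a top Chern class, induction via divided differences realized by $\PP^1$-bundle pushforwards), and the overall strategy is sound.

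Two places in the sketch need more care than you give them. First, in the base case the locus where all composites $\mathcal E_i \to \mathcal F_{n-i}$ vanish is \emph{not} literally the zero scheme of a section of $\bigoplus_{i+j\le n}\operatorname{Hom}(\mathcal E_i/\mathcal E_{i-1},\mathcal F_j/\mathcal F_{j-1})$: the induced maps on graded pieces $\mathcal E_i/\mathcal E_{i-1}\to \mathcal F_{n-i}$ are only defined after the previous vanishing conditions have been imposed. One must realize the locus as an iterated zero scheme (imposing the conditions one flag step at a time and using the Whitney formula on the resulting filtered bundle), or use a Kempf--Laksov-type Gysin pushforward; the product $\prod_{i+j\le n}(a_i-b_j)$ does come out, but not by the one-line argument you state. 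Even then, the top Chern class computes the cycle class of the zero locus only under a regularity hypothesis (e.g.\ $X$ Cohen--Macaulay, or no excess components) beyond equality of codimensions, which is why Fulton's theorem carries such a hypothesis. Second, in the inductive step the issue is not really the singularity of $\Omega_{ws_i}$ --- the identity $\pi^*\pi_* = \partial_i$ on Chow classes of a $\PP^1$-bundle holds regardless --- but rather the geometric facts that $\pi$ restricted to $\Omega_{ws_i}$ is birational onto its image and that $\pi^{-1}(\pi(\Omega_{ws_i})) = \Omega_w$ with the correct multiplicity; this is where the combinatorics of the rank conditions for $w$ versus $ws_i$ must actually be used. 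With those two points filled in, your outline matches the standard proof.
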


Fulton and Lascoux later proved an analogue in $K$-theory, expressing the structure sheaf class of $\Omega_w(f)$ in terms of a double Grothendieck polynomial.

\begin{theorem}[Fulton--Lascoux {\cite[Thm.~3]{FultonLascoux1994FlagBundle}}]
\label{thm:Grothendieck-degeneracy-loci-formula}
If $\Omega_w(f)$ has the expected codimension, then in $K_0(X)$ its structure sheaf class is given by
\[
[\cO_{\Omega_w(f)}]
= \mathfrak{G}_w(a_1^K,\dots,a_n^K \mid b_1^K,\dots,b_n^K).
\]
\end{theorem}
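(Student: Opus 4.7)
The plan is to reduce to a universal flag-variety setting and then induct on $\ell(w)$, matching the recursive definition of $\mathfrak{G}_w$ against the geometric push-pull of structure sheaves along a $\PP^1$-bundle projection. A flagged bundle morphism $(f: \mathcal{E}_\bullet \to \mathcal{F}_\bullet, X)$ is classified by a map $\varphi: X \to Y$ where $Y = Fl(V) \times Fl(W) \times \operatorname{Hom}(V, W)$ carries tautological flagged bundles and the tautological map $f^{\mathrm{univ}}$. The expected-codimension hypothesis yields Tor-independence, giving $[\cO_{\Omega_w(f)}] = \varphi^*[\cO_{\Omega_w(f^{\mathrm{univ}})}]$, and the $K$-theoretic first Chern classes $a_i^K, b_j^K$ pull back to their namesakes. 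It therefore suffices to prove the formula on $Y$, where $\Omega_w(f^{\mathrm{univ}})$ is a matrix-Schubert-type variety known to be Cohen--Macaulay with rational singularities.

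For the base case $w = w_0$, the rank table $\operatorname{rk}(w_0)[i,j] = \max(0, i+j-n)$ forces $\Omega_{w_0}(f^{\mathrm{univ}})$ to be the common vanishing locus of the composites $\mathcal{E}_i \hookrightarrow \mathcal{E}_n \overset{f}{\longrightarrow} \mathcal{F}_n \twoheadrightarrow \mathcal{F}_j$ for all $i + j \leq n$. Passing to associated graded pieces, these conditions decompose into a regular sequence of $\binom{n}{2} = \ell(w_0)$ section-vanishings of line bundles built from $\ker(\mathcal{F}_i \to \mathcal{F}_{i-1})$ and $\mathcal{E}_j/\mathcal{E}_{j-1}$. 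Resolving $\cO_{\Omega_{w_0}}$ via the associated Koszul complex and applying the Whitney sum formula together with the multiplicative group law $c_1^K(\mathcal{L} \otimes \mathcal{M}) = c_1^K(\mathcal{L}) + c_1^K(\mathcal{M}) - c_1^K(\mathcal{L})c_1^K(\mathcal{M})$ produces
\[
[\cO_{\Omega_{w_0}}] = \prod_{i + j \leq n}\bigl(a_i^K + b_j^K - a_i^K b_j^K\bigr) = \mathfrak{G}_{w_0}(a^K \mid b^K).
\]

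For the inductive step, suppose $w \neq w_0$ and choose $i$ with $w(i) < w(i+1)$, so $\ell(ws_i) = \ell(w) + 1$ and $\Omega_{ws_i} \subset \Omega_w$. Let $\pi$ denote the forgetful projection from the full $\mathcal{E}$-flag bundle to the partial flag bundle omitting $\mathcal{E}_i$; its fibers are projective lines. Since no essential condition of $w$ involves $\mathcal{E}_i$, one has $\Omega_w = \pi^{-1}(\Omega_{\bar w})$ for the coset image $\bar w$, while $\pi|_{\Omega_{ws_i}} \colon \Omega_{ws_i} \to \Omega_{\bar w}$ is birational. Using rational singularities to kill $R^{\geq 1}\pi_* \cO_{\Omega_{ws_i}}$, the projection formula yields
\[
\pi^*\pi_*[\cO_{\Omega_{ws_i}}] = \pi^*[\cO_{\Omega_{\bar w}}] = [\cO_{\Omega_w}].
\]
On first-Chern-class representatives the composite $\pi^*\pi_*$ acts as the isobaric divided difference operator $\pi_i$ of Definition~\ref{def:grothendieck-poly} on the $a_i^K$ variables. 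Hence by the inductive hypothesis $[\cO_{\Omega_w}] = \pi_i [\cO_{\Omega_{ws_i}}] = \pi_i \mathfrak{G}_{ws_i}(a^K \mid b^K) = \mathfrak{G}_w(a^K \mid b^K)$, closing the induction.

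The main obstacle is verifying the higher direct image vanishing $R^{\geq 1}\pi_* = 0$ at each inductive step, which rests on the classical but nontrivial fact that matrix Schubert varieties have rational singularities (via Bott--Samuelson desingularizations or Frobenius splitting), together with ensuring that the expected-codimension hypothesis propagates through the $\PP^1$-bundle constructions. Once those structural ingredients are in hand, both the base-case Koszul computation and the inductive push-pull identity reduce to routine applications of the Whitney sum formula and the multiplicative group law for $K$-theoretic first Chern classes.
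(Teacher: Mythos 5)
This theorem is imported by the paper directly from Fulton--Lascoux \cite[Thm.~3]{FultonLascoux1994FlagBundle} and is not proved there, so there is no internal argument to compare against. Your sketch reconstructs what is essentially the standard proof of this result: reduce to a universal flag-bundle situation by Tor-independence, verify the longest element by a Koszul-complex computation, and descend through the weak order by identifying the push-pull $\pi^*\pi_*$ along a $\PP^1$-bundle with the isobaric divided difference operator, the geometric input being normality and rational singularities of the relevant Schubert loci so that $\pi_*\cO_{\Omega_{ws_i}} = \cO_{\Omega_{\bar w}}$ and $R^{>0}\pi_* = 0$. The outline is sound, and you correctly isolate the genuinely nontrivial inputs.

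Two points deserve more care than the sketch gives them. First, there is an internal mismatch of conventions in the inductive step: you build the $\PP^1$-tower by forgetting $\mathcal E_i$, but then assert that $\pi^*\pi_*$ acts as $\pi_i$ on the $a_i^K$ variables. In the paper's conventions $a_i^K = c_1^K(\ker(\mathcal F_i\to\mathcal F_{i-1}))$ comes from the $\mathcal F$-flag, while varying $\mathcal E_i$ moves the $b$-variables; since \Cref{def:grothendieck-poly} has $\pi_i$ acting only on the $x$-variables (the $a$'s), the tower must be built on the $\mathcal F$-side, or one must route through the symmetry exchanging $w$ with $w^{-1}$ and the $x$'s with the $y$'s, keeping track of whether the recursion multiplies by $s_i$ on the right or the left. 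Second, in the base case, the claim that the staircase conditions form a regular sequence of $\binom{n}{2}$ line-bundle section vanishings is precisely the local-complete-intersection/Cohen--Macaulay input being used, and the Koszul factor attached to the entry $(i,j)$ is $c_1^K$ of a line bundle of the form $(\mathcal E_j/\mathcal E_{j-1})^\vee\otimes\ker(\mathcal F_i\to\mathcal F_{i-1})$. With the paper's definition $c_1^K(\mathcal L)=1-[\mathcal L^\vee]$, a direct computation gives $\bigl(a_i^K-b_j^K\bigr)/\bigl(1-b_j^K\bigr)$ rather than $a_i^K+b_j^K-a_i^Kb_j^K$ unless the $b_j^K$ are taken with the opposite dualization convention; Fulton--Lascoux choose their variables exactly so that the product closes up to $\mathfrak G_{w_0}$. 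Neither issue is fatal to the strategy, but both are where sign and variable errors would actually enter if the sketch were expanded into a proof.
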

These two formulas are the main inputs in our identification of classes of Pawlowski--Rhoades varieties $X_w$ with Schubert and Grothendieck polynomials associated to words.

\subsection{Classical and Bumpless pipe dreams}
\label{subsec:classical-PD} Pipe dreams are combinatorial objects closely related to Schubert and Grothendieck polynomials. We begin this section by recollecting some basic facts about classical pipe dreams. A \emph{(classical) pipe dream} $P$ is a finite subset of $\ZZ_{+}\times\ZZ_{+}$. One depicts $P$ typically by arranging $\ZZ_{+}\times\ZZ_{+}$ as a two-dimensional grid, placing a \crosstile-tile at each $(i,j)\in P$ and an \bt-tile at each $(i,j)\notin P$. The \crosstile-tile is called a \textit{cross} and the \bt-tile is called a \textit{bump}. The contents of tiles are connected into \textit{pipes} that run from the left boundary to the top boundary. The pipes are numbered according to their entering positions on the left boundary.

If one labels the top boundary by $1,2,3,\dots$ and reads along the left boundary from top to bottom the labels of the pipes that exit there, then one obtains a permutation $w$ of the positive integers that fixes all but finitely many values. We call $w$ the permutation of $P$. The pipe dream $P$ is \emph{reduced} if $\ell(w)=|P|$, or equivalently, if no two pipes cross twice. Note that when tracing a pipe in a non-reduced pipe dream, all but the first cross-tiles are interpreted as bump-tiles and hence ignored. Write $\operatorname{PD}(w)$ for the set of reduced pipe dreams for $w$ and $\overline{\operatorname{PD}}(w)$ for all pipe dreams for $w$. Since only finitely many crosses can appear, a finite triangular portion of the grid suffices to draw any diagram. The \textit{top pipe dream} $P_{\textrm{top}}$ is obtained by placing $c(w^{-1})_i$-many top-justified \crosstile-tiles in column $i$, where $c(w^{-1})$ is the Lehmer code. For more on classical pipe dreams, see \cite[\S 4]{textchapter}. 

Classical pipe dreams were first introduced by Bergeron-Billey \cite{BergeronBilley1993RC} under the name ``RC-graphs'' which encode the information of Stanley's \textit{compatible pairs} \cite{StanleyCompatiblePairs} in a visual way. Knutson-Miller \cite{KnutsonMiller2005} introduced the term ``pipe dreams.'' They considered non-reduced pipe dreams and discussed the algebraic and geometric information they contain.

\begin{example}
     \label{eg:PD-for-24153}
    Let $w\in S_5$ be the permutation $24153$. All reduced classical pipe dreams of $w$ are as follows. The top pipe dream $P_{\textrm{top}}$ is on the top left.
    \begin{scriptsize}
        \[\begin{gathered}
        \tikzpicture
  \PipeStart{5}
  \PipePlace{1}{1}{\pipeCross}
  \PipePlace{1}{2}{\pipeBump}
  \PipePlace{1}{3}{\pipeCross}
  \PipePlace{1}{4}{\pipeBump}
  \PipePlace{1}{5}{\pipeElbow}
  \PipePlace{2}{1}{\pipeCross}
  \PipePlace{2}{2}{\pipeBump}
  \PipePlace{2}{3}{\pipeCross}
  \PipePlace{2}{4}{\pipeElbow}
  \PipePlace{3}{1}{\pipeBump}
  \PipePlace{3}{2}{\pipeBump}
  \PipePlace{3}{3}{\pipeElbow}
  \PipePlace{4}{1}{\pipeBump}
  \PipePlace{4}{2}{\pipeElbow}
  \PipePlace{5}{1}{\pipeElbow}

  \PipeLabels{1,2,3,4,5}{2,4,1,5,3}
  \PipeEnd
\endtikzpicture 
\qquad
\tikzpicture
  \PipeStart{5}
  \PipePlace{1}{1}{\pipeCross}
  \PipePlace{1}{2}{\pipeBump}
  \PipePlace{1}{3}{\pipeCross}
  \PipePlace{1}{4}{\pipeBump}
  \PipePlace{1}{5}{\pipeElbow}
  \PipePlace{2}{1}{\pipeCross}
  \PipePlace{2}{2}{\pipeBump}
  \PipePlace{2}{3}{\pipeBump}
  \PipePlace{2}{4}{\pipeElbow}
  \PipePlace{3}{1}{\pipeBump}
  \PipePlace{3}{2}{\pipeCross}
  \PipePlace{3}{3}{\pipeElbow}
  \PipePlace{4}{1}{\pipeBump}
  \PipePlace{4}{2}{\pipeElbow}
  \PipePlace{5}{1}{\pipeElbow}

  \PipeLabels{1,2,3,4,5}{2,4,1,5,3}
  \PipeEnd
\endtikzpicture
\qquad
\tikzpicture
  \PipeStart{5}
  \PipePlace{1}{1}{\pipeCross}
  \PipePlace{1}{2}{\pipeBump}
  \PipePlace{1}{3}{\pipeBump}
  \PipePlace{1}{4}{\pipeBump}
  \PipePlace{1}{5}{\pipeElbow}
  \PipePlace{2}{1}{\pipeCross}
  \PipePlace{2}{2}{\pipeCross}
  \PipePlace{2}{3}{\pipeBump}
  \PipePlace{2}{4}{\pipeElbow}
  \PipePlace{3}{1}{\pipeBump}
  \PipePlace{3}{2}{\pipeCross}
  \PipePlace{3}{3}{\pipeElbow}
  \PipePlace{4}{1}{\pipeBump}
  \PipePlace{4}{2}{\pipeElbow}
  \PipePlace{5}{1}{\pipeElbow}

  \PipeLabels{1,2,3,4,5}{2,4,1,5,3}
  \PipeEnd
\endtikzpicture \\
\tikzpicture
  \PipeStart{5}
  \PipePlace{1}{1}{\pipeCross}
  \PipePlace{1}{2}{\pipeBump}
  \PipePlace{1}{3}{\pipeCross}
  \PipePlace{1}{4}{\pipeBump}
  \PipePlace{1}{5}{\pipeElbow}
  \PipePlace{2}{1}{\pipeCross}
  \PipePlace{2}{2}{\pipeBump}
  \PipePlace{2}{3}{\pipeBump}
  \PipePlace{2}{4}{\pipeElbow}
  \PipePlace{3}{1}{\pipeBump}
  \PipePlace{3}{2}{\pipeBump}
  \PipePlace{3}{3}{\pipeElbow}
  \PipePlace{4}{1}{\pipeCross}
  \PipePlace{4}{2}{\pipeElbow}
  \PipePlace{5}{1}{\pipeElbow}

  \PipeLabels{1,2,3,4,5}{2,4,1,5,3}
  \PipeEnd
\endtikzpicture
\qquad
\tikzpicture
  \PipeStart{5}
  \PipePlace{1}{1}{\pipeCross}
  \PipePlace{1}{2}{\pipeBump}
  \PipePlace{1}{3}{\pipeBump}
  \PipePlace{1}{4}{\pipeBump}
  \PipePlace{1}{5}{\pipeElbow}
  \PipePlace{2}{1}{\pipeCross}
  \PipePlace{2}{2}{\pipeCross}
  \PipePlace{2}{3}{\pipeBump}
  \PipePlace{2}{4}{\pipeElbow}
  \PipePlace{3}{1}{\pipeBump}
  \PipePlace{3}{2}{\pipeBump}
  \PipePlace{3}{3}{\pipeElbow}
  \PipePlace{4}{1}{\pipeCross}
  \PipePlace{4}{2}{\pipeElbow}
  \PipePlace{5}{1}{\pipeElbow}

  \PipeLabels{1,2,3,4,5}{2,4,1,5,3}
  \PipeEnd
\endtikzpicture
    \end{gathered}
\]
    \end{scriptsize}

\end{example}

The \emph{monomial weight} of a pipe dream $P$ is the product of the row-indexed variables over the its crossings,
\[
x^{P}:=\prod_{(i,j)\in P}x_{i}.
\]
With a second set of variables $y_j$, one can also record the column indices in the product
\[
(x-y)^{P}:=\prod_{(i,j)\in P}(x_{i}-y_{j})
\]
and define a `$K$-theoretic weight'
\[(x/y)^P := \prod_{(i,j\in P)}(x_1 + y_j - x_1y_j).\]

\begin{theorem}[Bergeron-Billey \cite{BergeronBilley1993RC} Theorem 3.7; Fomin--Kirillov \cite{FominKirillovDoubleSchubert} Proposition 6.2; Fomin--Kirillov \cite{FominKirillov1994YangBaxterGrothendieck} 2.3 Theorem; \cite{Buch2005DoubleGrothendieck} Corollary 2.2]\label{thm:pipe dreams} 
For $w\in S_n$, its Schubert polynomial is equal to
\[
\mathfrak{S}_w(x_1,\dots,x_n)=\sum_{P\in\operatorname{PD}(w)} x^{P}.
\]
The double Schubert polynomial of $w$ is equal to \[\mathfrak{S}_w(x\mid y) = \sum_{P\in \operatorname{PD}(w)}(x-y)^P.\]
The Grothendieck polynomial of $w$ is equal to \[\mathfrak{G}_w(x_1, \dots, x_n) = \sum_{P \in \overline{PD}(w)} (-1)^{\# P - \ell(w)}x^P.\]
The double Grothendieck polynomial of $w$ is equal to \[\mathfrak{G}_w(x\mid y) = \sum_{P \in \overline{PD}(w)} (-1)^{\# P - \ell(w)}(x/y)^P.\]
\end{theorem}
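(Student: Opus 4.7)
The plan is to prove all four identities by simultaneous downward induction on the Bruhat order, driven by the divided difference recursions in \Cref{def:schubert-poly,def:grothendieck-poly}. I will describe the argument in detail for $\mathfrak{S}_w$; the other three identities follow by essentially the same reasoning with mild bookkeeping adjustments.

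\emph{Base case.} When $w = w_0 = [n, n-1, \dots, 1]$, I claim the unique reduced pipe dream is the staircase $P_0 = \{(i,j) : i+j \leq n\}$. Indeed, $\ell(w_0) = \binom{n}{2} = |P_0|$, and every pair of pipes must cross exactly once, which forces this configuration. Its monomial weight $x^{P_0} = x_1^{n-1} x_2^{n-2} \cdots x_{n-1}$ matches the base case of \Cref{def:schubert-poly}, and the weights $(x-y)^{P_0} = \prod_{i+j\leq n}(x_i - y_j)$ and $(x/y)^{P_0} = \prod_{i+j \leq n}(x_i + y_j - x_i y_j)$ establish the base cases for the doubled Schubert and Grothendieck polynomials. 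For $\mathfrak{G}_{w_0}$, any pipe dream with permutation $w_0$ must contain $P_0$ as a subset, and no proper superset is possible without causing a double-crossing, so $P_0$ remains the unique element of $\overline{\operatorname{PD}}(w_0)$.

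\emph{Inductive step for $\mathfrak{S}_w$.} Suppose $w(i) < w(i+1)$, so that $\ell(ws_i) = \ell(w) + 1$, and assume inductively that $\mathfrak{S}_{ws_i} = \sum_{Q \in \operatorname{PD}(ws_i)} x^Q$. The goal is to show $\partial_i$ applied to the right hand side equals $\sum_{P \in \operatorname{PD}(w)} x^P$. Given $Q \in \operatorname{PD}(ws_i)$, focus on the configuration in rows $i$ and $i+1$. A local combinatorial analysis (the classical \emph{ladder move}) shows that either $Q$ contains a cross in row $i$ whose downstairs neighbor is a bump at some column where swapping produces another pipe dream for $ws_i$ with adjacent weight cancelling under $\partial_i$, or $Q$ restricts in rows $i, i+1$ to a configuration for which $\partial_i$ outputs $x^P$ for a unique $P \in \operatorname{PD}(w)$ obtained by sliding a cross from row $i$ into row $i+1$. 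Collecting contributions yields the desired identity.

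\emph{Extensions to the other three identities.} For $\mathfrak{S}_w(x \mid y)$, the same ladder-move bijection preserves the column index of each cross, so replacing $x^Q$ by $(x-y)^Q$ throughout, combined with the fact that $\partial_i$ acts only on $x$-variables, produces the double Schubert formula. For $\mathfrak{G}_w$, replace $\partial_i$ by $\pi_i = \partial_i(1 - x_{i+1})$; the extra factor $(1 - x_{i+1})$ corresponds combinatorially to the binary choice of adding or not adding a redundant cross in row $i+1$, which accounts for the passage from $\operatorname{PD}(w)$ to $\overline{\operatorname{PD}}(w)$. The signs $(-1)^{|P| - \ell(w)}$ arise from the sign in $\pi_i$, and tracking them through the induction shows they match the number of extra non-reduced crosses. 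The double Grothendieck case combines both refinements.

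\emph{Main obstacle.} The heart of the argument is the ladder-move bijection: exhibiting explicit sign-reversing pairings on $\operatorname{PD}(ws_i)$ that implement $\partial_i$, and identifying the surviving fixed points with $\operatorname{PD}(w)$. For the Grothendieck statements, the added difficulty is verifying that the isobaric correction factor $(1 - x_{i+1})$ interacts with redundant crosses so that the signed sum telescopes correctly, preserving the parity accounting of $(-1)^{|P| - \ell(w)}$. These technical verifications are carried out in detail in the original papers cited in the theorem statement, so rather than reprove them I would simply invoke the results directly.
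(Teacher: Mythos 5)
The paper gives no proof of this theorem: it is quoted as background with attributions to Bergeron--Billey, Fomin--Kirillov, and Buch, so your decision to defer the key combinatorial verifications to those same references matches the paper's treatment exactly. Your surrounding sketch (unique staircase pipe dream at $w_0$, downward induction via $\partial_i$ and $\pi_i$, with the $(1-x_{i+1})$ factor accounting for optional redundant crosses) is a correct outline of the standard argument, with the genuinely hard content --- the ladder/chute-move analysis implementing the divided differences --- appropriately attributed rather than reproved.
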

In other words, Schubert and Grothendieck polynomials are monomial-weight generating functions for classical pipe dreams. For instance, the five reduced pipe dreams in \Cref{eg:PD-for-24153} have monomial weights \[x_1^2x_2^2, x_1^2x_2x_3, x_1^2x_2x_4, x_1x_2^2x_3, x_1x_2^2x_4,\]
and the Schubert polynomial for $w = 24153$ is equal to the sum of these five monomial terms, \[\mathfrak{S}_w(x) = x_1^2x_2^2+ x_1^2x_2x_3+ x_1^2x_2x_4+ x_1x_2^2x_3+x_1x_2^2x_4.\]

\textit{Chute moves} are permutation-preserving local moves on $\overline{\operatorname{PD}}(w)$ introduced in \cite{BergeronBilley1993RC}. For $P \in \overline{\operatorname{PD}}(w)$, suppose there are positive integers $i,j,k$ such that $i\leq j$, the rectangle $[k,k+1]\times [i+1,j] \subseteq P$, the NE corner $(k,j+1)$ is in $P$ but all other corners $(k,i),(k+1,i), (k+1, j+1)\notin P$. Then, the local move sending $P$ to $(P-(k,j+1)) \cup( k+1,i)$ is a \textit{chute move}. The move sending $P$ to $P \cup(k+1,i)$ is a \textit{K-theoretic chute move}. See \Cref{fig:chute-move} for an illustration. It is a theorem of Bergeron-Billey \cite[Theorem 3.7]{BergeronBilley1993RC} that all reduced pipe dreams in $\operatorname{PD}(w)$ can be obtained by performing chute moves on $P_{\textrm{top}}$, and all pipe dreams in $\overline{\operatorname{PD}}(w)$ can be obtained by ordinary and $K$-theoretic chute moves on $P_{\textrm{top}}$.

\begin{figure}[h]
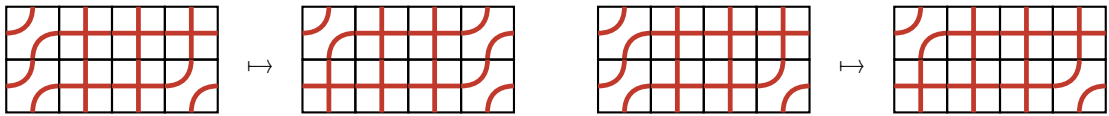

    \centering
    \begin{footnotesize}
    \[\tikzpicture
  \PipeStart{6}
  \PipePlace{1}{1}{\pipeBump}
  \PipePlace{1}{2}{\pipeCross}
  \PipePlace{1}{3}{\pipeCross}
  \PipePlace{1}{4}{\pipeCross}
  \PipePlace{2}{1}{\pipeBump}
  \PipePlace{2}{2}{\pipeCross}
  \PipePlace{2}{3}{\pipeCross}
  \PipePlace{2}{4}{\pipeBump}
  \PipeEnd
\endtikzpicture \quad \raisebox{1.6em}{$\mapsto$} \quad
\tikzpicture
  \PipeStart{6}
  \PipePlace{1}{1}{\pipeBump}
  \PipePlace{1}{2}{\pipeCross}
  \PipePlace{1}{3}{\pipeCross}
  \PipePlace{1}{4}{\pipeBump}
  \PipePlace{2}{1}{\pipeCross}
  \PipePlace{2}{2}{\pipeCross}
  \PipePlace{2}{3}{\pipeCross}
  \PipePlace{2}{4}{\pipeBump}
  \PipeEnd
\endtikzpicture
\quad \qquad
\tikzpicture
  \PipeStart{6}
  \PipePlace{1}{1}{\pipeBump}
  \PipePlace{1}{2}{\pipeCross}
  \PipePlace{1}{3}{\pipeCross}
  \PipePlace{1}{4}{\pipeCross}
  \PipePlace{2}{1}{\pipeBump}
  \PipePlace{2}{2}{\pipeCross}
  \PipePlace{2}{3}{\pipeCross}
  \PipePlace{2}{4}{\pipeBump}
  \PipeEnd
\endtikzpicture \quad \raisebox{1.6em}{$\mapsto$} \quad
\tikzpicture
  \PipeStart{6}
  \PipePlace{1}{1}{\pipeBump}
  \PipePlace{1}{2}{\pipeCross}
  \PipePlace{1}{3}{\pipeCross}
  \PipePlace{1}{4}{\pipeCross}
  \PipePlace{2}{1}{\pipeCross}
  \PipePlace{2}{2}{\pipeCross}
  \PipePlace{2}{3}{\pipeCross}
  \PipePlace{2}{4}{\pipeBump}
  \PipeEnd
\endtikzpicture\]
\end{footnotesize}
    \caption{Visualization of chute moves (left) and $K$-theoretic chute moves (right).}
    \label{fig:chute-move}
\end{figure}

Next, we recollect some facts about bumpless pipe dreams. A \emph{bumpless pipe dream (BPD)} $B$ is a tiling of the grid $\ZZ_{+}\times\ZZ_{+}$ in matrix coordinates by the following six tiles
\begin{large}
\[\bpdNW \quad \bpdSE \quad \bpdblank \quad \bpdplus \quad \bpdh \quad \bpdv  \]
\end{large}
that route pipes monotonically in the northeast direction. Similar to classical pipe dreams, the pipes of a BPD are numbered according to their entering positions on the right boundary. A bumpless pipe dream is \emph{reduced} if no two pipes cross twice. It encodes a wiring diagram of a permutation of the positive integers with all but finitely many non-fixed values by requiring that pipe $i$ enters along the left boundary at height $i$ and exits along the top boundary at column $w(i)$. Redundant crossings in a non-reduced BPD are similarly ignored for the purpose of determining the permutation. The term “bumpless” reflects that the bump tile~\bt \ from classical pipe dreams is forbidden. For $w\in \mathcal{S}_n$, the \textit{diagram BPD}, denoted $B_{\textrm{diagram}}$, is the unique BPD having SE-elbow tiles at exactly $(w_i,i)$ for all $i$. For a permutation $w$, let $\operatorname{BPD}(w)$ be the set of reduced bumpless pipe dreams of $w$ and $\overline{\operatorname{BPD}}(w)$ the set of all BPDs for $w$.  For $w\in S_n$, one typically draws a diagram inside an $n\times n$ grid, where all possible crosses for $\operatorname{BPD}(w)$ take place. See \Cref{eg:BPD-for-24153} for an illustration.

Lam-Lee-Shimozono introduced bumpless pipe dreams in their study of the infinite flag variety and back-stable Schubert calculus \cite{LLS}. Subsequent work has developed combinatorial moves, algebraic structures, and geometric interpretations for these tilings, and has clarified their connections and contrasts with classical pipe dreams, including distinct factorizations for double Schubert polynomials \cite{weigandt2021bumpless, KleinWeigandtGroebner, HuangMonk, GaoHuangCanonical, LeOuyangRestivoTaoZhangQuantum, KnutsonHybrid}.

\begin{example}\label{eg:BPD-for-24153}
Below are all bumpless pipe dreams of $w = 24153$. The diagram BPD $B_{\textrm{diagram}}$ is at the top left. Note that the one on the bottom right is non-reduced.
\begin{scriptsize}
    \[\begin{gathered} \tikzpicture
  \BPDStart{5}{5}
  \BPDPlace{1}{1}{\blank}
  \BPDPlace{1}{2}{\SE}
  \BPDPlace{1}{3}{\hor}
  \BPDPlace{1}{4}{\hor}
  \BPDPlace{1}{5}{\hor}
  \BPDPlace{2}{1}{\blank}
  \BPDPlace{2}{2}{\ver}
  \BPDPlace{2}{3}{\blank}
  \BPDPlace{2}{4}{\SE}
  \BPDPlace{2}{5}{\hor}
  \BPDPlace{3}{1}{\SE}
  \BPDPlace{3}{2}{\+}
  \BPDPlace{3}{3}{\hor}
  \BPDPlace{3}{4}{\+}
  \BPDPlace{3}{5}{\hor}
  \BPDPlace{4}{1}{\ver}
  \BPDPlace{4}{2}{\ver}
  \BPDPlace{4}{3}{\blank}
  \BPDPlace{4}{4}{\ver}
  \BPDPlace{4}{5}{\SE}
  \BPDPlace{5}{1}{\ver}
  \BPDPlace{5}{2}{\ver}
  \BPDPlace{5}{3}{\SE}
  \BPDPlace{5}{4}{\+}
  \BPDPlace{5}{5}{\+}
  \BPDEnd

  \begin{scope}[x=2em,y=2em, yscale=-1, shift={(0,5)}]
    \node[below] at (0.5,5) {\small 1};
    \node[below] at (1.5,5) {\small 2};
    \node[below] at (2.5,5) {\small 3};
    \node[below] at (3.5,5) {\small 4};
    \node[below] at (4.5,5) {\small 5};
  \end{scope}
  \begin{scope}[x=2em,y=2em, yscale=-1, shift={(0,5)}]
    \node[right] at (5,0.5) {\small 2};
    \node[right] at (5,1.5) {\small 4};
    \node[right] at (5,2.5) {\small 1};
    \node[right] at (5,3.5) {\small 5};
    \node[right] at (5,4.5) {\small 3};
  \end{scope}
\endtikzpicture  \qquad
\tikzpicture
  \BPDStart{5}{5}
  \BPDPlace{1}{1}{\blank}
  \BPDPlace{1}{2}{\blank}
  \BPDPlace{1}{3}{\SE}
  \BPDPlace{1}{4}{\hor}
  \BPDPlace{1}{5}{\hor}
  \BPDPlace{2}{1}{\blank}
  \BPDPlace{2}{2}{\SE}
  \BPDPlace{2}{3}{\NW}
  \BPDPlace{2}{4}{\SE}
  \BPDPlace{2}{5}{\hor}
  \BPDPlace{3}{1}{\SE}
  \BPDPlace{3}{2}{\+}
  \BPDPlace{3}{3}{\hor}
  \BPDPlace{3}{4}{\+}
  \BPDPlace{3}{5}{\hor}
  \BPDPlace{4}{1}{\ver}
  \BPDPlace{4}{2}{\ver}
  \BPDPlace{4}{3}{\blank}
  \BPDPlace{4}{4}{\ver}
  \BPDPlace{4}{5}{\SE}
  \BPDPlace{5}{1}{\ver}
  \BPDPlace{5}{2}{\ver}
  \BPDPlace{5}{3}{\SE}
  \BPDPlace{5}{4}{\+}
  \BPDPlace{5}{5}{\+}
  \BPDEnd

  \begin{scope}[x=2em,y=2em, yscale=-1, shift={(0,5)}]
    \node[below] at (0.5,5) {\small 1};
    \node[below] at (1.5,5) {\small 2};
    \node[below] at (2.5,5) {\small 3};
    \node[below] at (3.5,5) {\small 4};
    \node[below] at (4.5,5) {\small 5};
  \end{scope}
  \begin{scope}[x=2em,y=2em, yscale=-1, shift={(0,5)}]
    \node[right] at (5,0.5) {\small 2};
    \node[right] at (5,1.5) {\small 4};
    \node[right] at (5,2.5) {\small 1};
    \node[right] at (5,3.5) {\small 5};
    \node[right] at (5,4.5) {\small 3};
  \end{scope}
\endtikzpicture \qquad 
\tikzpicture
  \BPDStart{5}{5}
  \BPDPlace{1}{1}{\blank}
  \BPDPlace{1}{2}{\blank}
  \BPDPlace{1}{3}{\SE}
  \BPDPlace{1}{4}{\hor}
  \BPDPlace{1}{5}{\hor}
  \BPDPlace{2}{1}{\blank}
  \BPDPlace{2}{2}{\SE}
  \BPDPlace{2}{3}{\NW}
  \BPDPlace{2}{4}{\SE}
  \BPDPlace{2}{5}{\hor}
  \BPDPlace{3}{1}{\blank}
  \BPDPlace{3}{2}{\ver}
  \BPDPlace{3}{3}{\SE}
  \BPDPlace{3}{4}{\+}
  \BPDPlace{3}{5}{\hor}
  \BPDPlace{4}{1}{\SE}
  \BPDPlace{4}{2}{\+}
  \BPDPlace{4}{3}{\NW}
  \BPDPlace{4}{4}{\ver}
  \BPDPlace{4}{5}{\SE}
  \BPDPlace{5}{1}{\ver}
  \BPDPlace{5}{2}{\ver}
  \BPDPlace{5}{3}{\SE}
  \BPDPlace{5}{4}{\+}
  \BPDPlace{5}{5}{\+}
  \BPDEnd

  \begin{scope}[x=2em,y=2em, yscale=-1, shift={(0,5)}]
    \node[below] at (0.5,5) {\small 1};
    \node[below] at (1.5,5) {\small 2};
    \node[below] at (2.5,5) {\small 3};
    \node[below] at (3.5,5) {\small 4};
    \node[below] at (4.5,5) {\small 5};
  \end{scope}
  \begin{scope}[x=2em,y=2em, yscale=-1, shift={(0,5)}]
    \node[right] at (5,0.5) {\small 2};
    \node[right] at (5,1.5) {\small 4};
    \node[right] at (5,2.5) {\small 1};
    \node[right] at (5,3.5) {\small 5};
    \node[right] at (5,4.5) {\small 3};
  \end{scope}
\endtikzpicture \qquad \\
\tikzpicture
  \BPDStart{5}{5}
  \BPDPlace{1}{1}{\blank}
  \BPDPlace{1}{2}{\blank}
  \BPDPlace{1}{3}{\SE}
  \BPDPlace{1}{4}{\hor}
  \BPDPlace{1}{5}{\hor}
  \BPDPlace{2}{1}{\blank}
  \BPDPlace{2}{2}{\blank}
  \BPDPlace{2}{3}{\ver}
  \BPDPlace{2}{4}{\SE}
  \BPDPlace{2}{5}{\hor}
  \BPDPlace{3}{1}{\SE}
  \BPDPlace{3}{2}{\hor}
  \BPDPlace{3}{3}{\+}
  \BPDPlace{3}{4}{\+}
  \BPDPlace{3}{5}{\hor}
  \BPDPlace{4}{1}{\ver}
  \BPDPlace{4}{2}{\SE}
  \BPDPlace{4}{3}{\NW}
  \BPDPlace{4}{4}{\ver}
  \BPDPlace{4}{5}{\SE}
  \BPDPlace{5}{1}{\ver}
  \BPDPlace{5}{2}{\ver}
  \BPDPlace{5}{3}{\SE}
  \BPDPlace{5}{4}{\+}
  \BPDPlace{5}{5}{\+}
  \BPDEnd

  \begin{scope}[x=2em,y=2em, yscale=-1, shift={(0,5)}]
    \node[below] at (0.5,5) {\small 1};
    \node[below] at (1.5,5) {\small 2};
    \node[below] at (2.5,5) {\small 3};
    \node[below] at (3.5,5) {\small 4};
    \node[below] at (4.5,5) {\small 5};
  \end{scope}
  \begin{scope}[x=2em,y=2em, yscale=-1, shift={(0,5)}]
    \node[right] at (5,0.5) {\small 2};
    \node[right] at (5,1.5) {\small 4};
    \node[right] at (5,2.5) {\small 1};
    \node[right] at (5,3.5) {\small 5};
    \node[right] at (5,4.5) {\small 3};
  \end{scope}
\endtikzpicture \qquad
\tikzpicture
  \BPDStart{5}{5}
  \BPDPlace{1}{1}{\blank}
  \BPDPlace{1}{2}{\SE}
  \BPDPlace{1}{3}{\hor}
  \BPDPlace{1}{4}{\hor}
  \BPDPlace{1}{5}{\hor}
  \BPDPlace{2}{1}{\blank}
  \BPDPlace{2}{2}{\ver}
  \BPDPlace{2}{3}{\blank}
  \BPDPlace{2}{4}{\SE}
  \BPDPlace{2}{5}{\hor}
  \BPDPlace{3}{1}{\blank}
  \BPDPlace{3}{2}{\ver}
  \BPDPlace{3}{3}{\SE}
  \BPDPlace{3}{4}{\+}
  \BPDPlace{3}{5}{\hor}
  \BPDPlace{4}{1}{\SE}
  \BPDPlace{4}{2}{\+}
  \BPDPlace{4}{3}{\NW}
  \BPDPlace{4}{4}{\ver}
  \BPDPlace{4}{5}{\SE}
  \BPDPlace{5}{1}{\ver}
  \BPDPlace{5}{2}{\ver}
  \BPDPlace{5}{3}{\SE}
  \BPDPlace{5}{4}{\+}
  \BPDPlace{5}{5}{\+}
  \BPDEnd

  \begin{scope}[x=2em,y=2em, yscale=-1, shift={(0,5)}]
    \node[below] at (0.5,5) {\small 1};
    \node[below] at (1.5,5) {\small 2};
    \node[below] at (2.5,5) {\small 3};
    \node[below] at (3.5,5) {\small 4};
    \node[below] at (4.5,5) {\small 5};
  \end{scope}
  \begin{scope}[x=2em,y=2em, yscale=-1, shift={(0,5)}]
    \node[right] at (5,0.5) {\small 2};
    \node[right] at (5,1.5) {\small 4};
    \node[right] at (5,2.5) {\small 1};
    \node[right] at (5,3.5) {\small 5};
    \node[right] at (5,4.5) {\small 3};
  \end{scope}
\endtikzpicture \qquad 
\tikzpicture
  \BPDStart{5}{5}
  \BPDPlace{1}{1}{\blank}
  \BPDPlace{1}{2}{\blank}
  \BPDPlace{1}{3}{\SE}
  \BPDPlace{1}{4}{\hor}
  \BPDPlace{1}{5}{\hor}
  \BPDPlace{2}{1}{\blank}
  \BPDPlace{2}{2}{\blank}
  \BPDPlace{2}{3}{\ver}
  \BPDPlace{2}{4}{\SE}
  \BPDPlace{2}{5}{\hor}
  \BPDPlace{3}{1}{\blank}
  \BPDPlace{3}{2}{\SE}
  \BPDPlace{3}{3}{\+}
  \BPDPlace{3}{4}{\+}
  \BPDPlace{3}{5}{\hor}
  \BPDPlace{4}{1}{\SE}
  \BPDPlace{4}{2}{\+}
  \BPDPlace{4}{3}{\NW}
  \BPDPlace{4}{4}{\ver}
  \BPDPlace{4}{5}{\SE}
  \BPDPlace{5}{1}{\ver}
  \BPDPlace{5}{2}{\ver}
  \BPDPlace{5}{3}{\SE}
  \BPDPlace{5}{4}{\+}
  \BPDPlace{5}{5}{\+}
  \BPDEnd

  \begin{scope}[x=2em,y=2em, yscale=-1, shift={(0,5)}]
    \node[below] at (0.5,5) {\small 1};
    \node[below] at (1.5,5) {\small 2};
    \node[below] at (2.5,5) {\small 3};
    \node[below] at (3.5,5) {\small 4};
    \node[below] at (4.5,5) {\small 5};
  \end{scope}
  \begin{scope}[x=2em,y=2em, yscale=-1, shift={(0,5)}]
    \node[right] at (5,0.5) {\small 2};
    \node[right] at (5,1.5) {\small 4};
    \node[right] at (5,2.5) {\small 1};
    \node[right] at (5,3.5) {\small 5};
    \node[right] at (5,4.5) {\small 3};
  \end{scope}
\endtikzpicture
\end{gathered}
\]
\end{scriptsize}
    \end{example}

Let $\mathrm{blank}(B)$ be the set of coordinates carrying blank tiles~\bl\ and let $\mathrm{cross}(B)$ be the set of coordinates carrying cross tiles~\bpdplus. Unlike the classical model, a bumpless pipe dream is not determined solely by $\mathrm{blank}(B)$ or by $\mathrm{cross}(B)$. The \textit{monomial weight }of $B$ is the product
\[
x^{B}:=\prod_{(i,j)\in \mathrm{blank}(B)} x_i,
\]
and similarly \[
(x-y)^{B}:=\prod_{(i,j)\in \mathrm{blank}(B)} (x_i-y_j)
\]
with a second set of variables $y_j$. The following theorem is completely parallel to \Cref{thm:pipe dreams}.
\begin{theorem}[Lam-Lee-Shimozono \cite{LLS}]\label{thm:bumpless-schubert}
For $w\in S_n$,
\[
  \mathfrak{S}_w(x_1,\dots,x_n)=\sum_{B\in \operatorname{BPD}(w)} x^{B},
\] and \[
\mathfrak{S}_w(x_1,\dots, x_n\mid y_1,\dots, y_n) = \sum_{B\in \operatorname{BPD}(w)}(x-y)^B.
\]
\end{theorem}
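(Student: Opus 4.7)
My plan is to establish the single-variable identity via the Lascoux--Sch\"utzenberger divided difference recursion and then upgrade it to the double version. Writing $F_w(x) := \sum_{B \in \operatorname{BPD}(w)} x^B$, it suffices to verify that $F_{w_0}(x) = x_1^{n-1}x_2^{n-2}\cdots x_{n-1}$ and that $F_w = \partial_i F_{ws_i}$ whenever $w(i) < w(i+1)$, since these two properties uniquely characterize the Schubert polynomials as a family.

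For the base case, I would prove that $\operatorname{BPD}(w_0)$ contains exactly one element, the ``staircase'' BPD $B_0$ in which the blank tiles occupy $\{(i,j) : i+j \leq n\}$, the SE-elbow tiles lie on the antidiagonal $i+j = n+1$, and horizontal/vertical routing tiles fill the rest. Uniqueness follows because pipe $i$ must exit at column $n+1-i$, forcing every pipe to travel as far northwest as geometrically possible; any deviation either violates the monotone northeast flow of pipes or introduces a non-reduced crossing. The monomial weight is then $\prod_{i+j \leq n} x_i = x_1^{n-1} x_2^{n-2}\cdots x_{n-1}$, matching $\mathfrak{S}_{w_0}$.

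The inductive step requires a weight-tracking correspondence between $\operatorname{BPD}(w)$ and $\operatorname{BPD}(ws_i)$ realizing $\partial_i$. I would partition $\operatorname{BPD}(ws_i)$ according to the local configuration of the two pipes entering at rows $i$ and $i+1$: identify a subset $\mathcal A \subseteq \operatorname{BPD}(ws_i)$ whose elements carry an SE-elbow in row $i+1$ belonging to the pipe that exits at column $ws_i(i+1)$. For $B \in \mathcal A$, apply a ``droop'' move sliding this SE-elbow northward into row $i$, converting a blank tile and producing a BPD of $w$; the complementary set $\mathcal A^c$ pairs up under a row-swap involution $\tau_i$ exchanging the contents of rows $i$ and $i+1$, making its contribution $s_i$-symmetric and therefore annihilated by $\partial_i$. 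Computing $\partial_i x^B$ explicitly on the droop-image yields the required recursion $F_w = \partial_i F_{ws_i}$. For the double version, refine the weight at each blank tile $(i,j)$ from $x_i$ to $(x_i - y_j)$; the base case now evaluates to $\prod_{i+j \leq n}(x_i - y_j) = \mathfrak{S}_{w_0}(x \mid y)$, and the same row-swap/droop bijection lifts verbatim since $\partial_i$ acts only on the $x$-variables and the bijection preserves the column index $j$ of each blank tile outside rows $i, i+1$.

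The principal obstacle is formulating and verifying the droop move in full generality. Unlike the chute moves in the classical pipe dream model, which are purely local, a droop in the BPD model may slide an SE-elbow along its pipe for a substantial distance, and one must check that the target location is a blank tile that can absorb the elbow without creating a double crossing, that all tiles along the sliding path update consistently (horizontals becoming verticals and vice versa, NW-elbows becoming SE-elbows in appropriate configurations), and that the resulting tiling is a reduced BPD of the target permutation. This reduces to a careful but finite case analysis on the tiles adjacent to the moving elbow and the pipes that the droop crosses.
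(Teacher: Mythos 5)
The paper does not prove this statement; it is quoted from Lam--Lee--Shimozono \cite{LLS} as background, so there is no internal proof to compare against. Judged on its own terms, your proposal has a correct base case but a genuine gap at its core in the inductive step.

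The base case is fine: $\operatorname{BPD}(w_0)$ consists of the single staircase diagram with blank tiles on $\{(i,j):i+j\le n\}$, whose weight is $x_1^{n-1}\cdots x_{n-1}$ (resp.\ $\prod_{i+j\le n}(x_i-y_j)$). The problem is the recursion $F_w=\partial_i F_{ws_i}$. First, $\partial_i$ applied to a single monomial $x_i^a x_{i+1}^b\,m$ yields a \emph{sum} of $|a-b|$ monomials, so a correspondence sending each $B\in\mathcal A$ to a single drooped BPD of $w$ cannot realize $\partial_i$; you would need each such $B$ to generate a whole family of BPDs of $w$ (the analogue of Knutson--Miller's mitosis operator for classical pipe dreams), and no such operator is constructed or even specified here. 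Second, the proposed involution $\tau_i$ ``exchanging the contents of rows $i$ and $i+1$'' does not in general produce a valid tiling: the six BPD tiles must connect continuously across the boundaries with rows $i-1$ and $i+2$, and literally swapping two rows typically breaks pipe continuity, so the claimed $s_i$-symmetry of the complement is unjustified. These two points are precisely where the published proofs do their real work: Lam--Lee--Shimozono argue in the back-stable setting, where one has divided differences in both the $x$- and $y$-variables and a characterization of (back-stable) double Schubert polynomials by such symmetries, and other arguments proceed via Monk's rule or Lascoux's transition recurrence realized by droop moves rather than via $\partial_i$ directly. As written, the ``careful but finite case analysis'' you defer at the end is not a technical verification but the entire content of the theorem.
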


It is a remarkable fact that both single and double Grothendieck polynomials can be recovered from bumpless pipe dreams by allowing non-reduced diagrams and modifying the weight. Concretely, let $\overline{\operatorname{BPD}}(w)$ denote the set of all not-necessarily-reduced BPDs of $w$. For $B\in \overline{\operatorname{BPD}}(w)$, let $\operatorname{U}(B)=\{(i,j)\in B\mid B\text{ has a upper-left elbow tile at }(i,j)\}$ and define the $K$-theoretic monomial weight to be \[\begin{gathered}\operatorname{wt}_{K}(B;x)\;=\;\prod_{(i,j)\in\mathrm{blank}(B)}-x_i\;\cdot\!\!\!\prod_{(i,j)\in \operatorname{U}(B)}\!\!\!(1-x_i), \\\operatorname{wt}_{K}(B;x,y)\;=\;\prod_{(i,j)\in\mathrm{blank}(B)}(-x_i-y_j+x_iy_j)\;\cdot\!\!\!\prod_{(i,j)\in \operatorname{U}(B)}\!\!\!(1-x_i - y_j+x_iy_j).\end{gathered}\]
Weigandt proved the following BPD--Grothendieck polynomial expansion.

\begin{theorem}[Weigandt \cite{weigandt2021bumpless}]\label{thm:BPD-grothendieck}
    For $w\in S_n$, its Grothendieck polynomial equals 
    \[\mathfrak{G}_w(x)\;=\;\sum_{B\in\operatorname{BPD}(w)}\operatorname{wt}_{K}(B;x).\]
    The double Grothendieck polynomial for $w$ equals \[\mathfrak{G}_w(x,y)\;=\;\sum_{B\in\operatorname{BPD}(w)}\operatorname{wt}_{K}(B;x,y).\]
\end{theorem}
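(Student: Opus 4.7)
The plan is to prove the single-variable identity by induction on the co-length $\binom{n}{2}-\ell(w)$, using the defining recursion $\mathfrak{G}_w=\pi_i\mathfrak{G}_{ws_i}$ whenever $w(i)<w(i+1)$. For the base case $w=w_0$, I would identify the unique reduced BPD as the staircase diagram whose blank tiles occupy the Rothe diagram $\{(i,j):i+j\le n\}$, with $\bpdSE$ elbows along the antidiagonal and crossings in the complementary triangle. A direct evaluation of $\operatorname{wt}_K$ on this configuration should yield the monomial $x_1^{n-1}x_2^{n-2}\cdots x_{n-1}$ that defines $\mathfrak{G}_{w_0}$ (the NW-elbow contributions vanish in this case).

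For the inductive step, assuming $w(i)<w(i+1)$, I would establish
\[\pi_i\!\left(\sum_{B\in\overline{\operatorname{BPD}}(ws_i)} \operatorname{wt}_K(B;x)\right) \;=\; \sum_{B'\in\overline{\operatorname{BPD}}(w)} \operatorname{wt}_K(B';x).\]
Since $\pi_i$ acts only on the variables $x_i,x_{i+1}$, the strategy is to group BPDs according to the column-by-column tile configuration in rows $i$ and $i+1$, leaving everything else fixed. The main combinatorial input is the family of \emph{K-theoretic droop moves} of Lam--Lee--Shimozono and Weigandt, which locally transform a $\bpdNW\cdots\bpdSE$ elbow pair spanning rows $i,i+1$ (possibly with intermediate crossings interpreted as redundant bumps) into another tiling with the same underlying permutation but new tile types and weight. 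These moves partition the two-row strips into a short list of patterns on each of which the weight sum telescopes cleanly under $\pi_i$, and matching them to the corresponding patterns for $w$ gives the identity tile-pattern by tile-pattern. The double case follows by the same induction, with the base case replaced by the product $\prod_{i+j\le n}(x_i+y_j-x_iy_j)$ and the observation that $\pi_i$ treats the $y$-variables as scalars.

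The main obstacle is precisely this local analysis on the two-row strip. Unlike the classical pipe dream model, where $\pi_i$ acts transparently by creating or removing crosses in a single column, BPDs have six tile types whose legal two-row transitions form a nontrivial graph, and admitting non-reduced diagrams multiplies the case list further. In addition, the sign $(-x_i)$ on blanks interacts with the $(1-x_i)$ factors on NW elbows to produce substantial cancellation under $\pi_i$, and one must verify that the surviving terms biject with BPDs of $w$ with the correct signs. If this direct approach becomes unwieldy, an alternative plan would be to construct a weight-preserving bijection between $\overline{\operatorname{BPD}}(w)$ and $\overline{\operatorname{PD}}(w)$ (a Grothendieck-theoretic refinement of the Gao--Huang canonical bijection) and reduce to \Cref{thm:pipe dreams}, trading the difficulty of the inductive cancellations for the difficulty of constructing and verifying the bijection.
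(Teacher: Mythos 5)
This theorem is quoted from Weigandt \cite{weigandt2021bumpless}; the paper offers no proof of it, so there is nothing in-paper to compare against and your attempt must stand on its own. As written it does not: the entire mathematical content is concentrated in the inductive step, which you explicitly leave open. The specific claim that fails is that the compatibility of $\pi_i$ with the BPD generating function can be organized by fixing everything outside rows $i$ and $i+1$ and analyzing a two-row strip. Unlike the classical model, where the $\partial_i$-recursion can be localized to a single-row operation (the mitosis of Knutson--Miller \cite{KnutsonMiller2005}), the droop and $K$-theoretic droop moves connecting elements of $\overline{\operatorname{BPD}}(ws_i)$ to elements of $\overline{\operatorname{BPD}}(w)$ move blank tiles and elbows across many rows, and the sets of ``configurations outside rows $i,i+1$'' occurring for $ws_i$ and for $w$ do not coincide. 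So the proposed partition into two-row patterns is not stable under the very moves you need, and establishing a BPD analogue of mitosis (or a Monk/transition recursion for BPDs) is itself a substantial theorem rather than a routine case check. A smaller but real issue is the base case: with the weight as defined here, each blank tile contributes $-x_i$, so the unique BPD of $w_0$ has weight $(-1)^{\binom n2}x_1^{n-1}\cdots x_{n-1}$, and the asserted equality with $\mathfrak G_{w_0}$ requires a sign normalization you have not verified.

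For the record, Weigandt's actual argument takes a genuinely different route: bumpless pipe dreams are put in weight-preserving correspondence with alternating sign matrices, and the formula is deduced from Lascoux's ASM expansion of Grothendieck polynomials, bypassing any direct $\pi_i$-compatibility on tilings. Your fallback plan --- a weight-preserving bijection between $\overline{\operatorname{BPD}}(w)$ and $\overline{\operatorname{PD}}(w)$ reducing to \Cref{thm:pipe dreams} --- corresponds to later work (the marked bumpless pipe dreams of \cite{HuangMarkedBPD}, building on \cite{GaoHuangCanonical}) and is likewise far from a one-step construction. Either route is legitimate, but each requires a concrete combinatorial theorem that your proposal defers rather than supplies.
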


There have since been fruitful developments to \Cref{thm:BPD-grothendieck}. In the back-stable setting, Lam-Lee-Shimozono gave $K$-bumpless models and expansion formulae for back-stable Grothendieck polynomials \cite[§6]{MR4681291}. Further developments include vexillary Grothendieck polynomials via BPD \cite{HafnerBPD}, and bijective frameworks unifying BPD-based formulas for Grothendieck polynomials \cite{HuangMarkedBPD}.

Similar to chute moves,  \textit{droop moves} are permutation-preserving local moves on BPD which generates all of $\overline{\operatorname{BPD}}(w)$, introduced by Lam-Lee-Shimozono \cite{LLS}. For $B\in \overline{\operatorname{BPD}}(w)$ that contains \bpdblank-tiles at $(i, j)$ and $(i+a, j+b)$, a \textit{droop move} replaces the ``$\lceil$-shape''$(i+a, j) \rightarrow(i, j) \rightarrow(i, j+b)$ by the ``$\rfloor$-shape'' $(i+a, j) \rightarrow(i+a, j+b) \rightarrow(i, j+b).$ Weigandt \cite{weigandt2021bumpless} introduced \textit{K-theoretic droop moves}, which apply to two pipes that are already crossing and performs the same replacement when the tile at $(i+a, j+b)$ is a SE-elbow. All of $\operatorname{BPD}(w)$ can be obtained from $B_{\textrm{diagram}}$ by performing droop moves, and all of $\overline{\operatorname{BPD}}(w)$ can be obtained from $B_{\textrm{diagram}}$ by $K$-theoretic droop moves. See \Cref{fig:droop-moves} for an illustration.
\begin{figure}[h]
    \centering
\begin{footnotesize}
    \[\tikzpicture
  \BPDStart{3}{3}
  \BPDPlace{1}{1}{\SE}
  \BPDPlace{1}{2}{\+}
  \BPDPlace{1}{3}{\hor}
  \BPDPlace{2}{1}{\+}
  \BPDPlace{2}{2}{\+}
  \BPDPlace{2}{3}{\hor}
  \BPDPlace{3}{1}{\ver}
  \BPDPlace{3}{2}{\ver}
  \BPDPlace{3}{3}{\blank}
  \BPDEnd
\endtikzpicture \quad \raisebox{3em}{$\mapsto$} \quad 
\tikzpicture
  \BPDStart{3}{3}
  \BPDPlace{1}{1}{\blank}
  \BPDPlace{1}{2}{\ver}
  \BPDPlace{1}{3}{\SE}
  \BPDPlace{2}{1}{\hor}
  \BPDPlace{2}{2}{\+}
  \BPDPlace{2}{3}{\+}
  \BPDPlace{3}{1}{\SE}
  \BPDPlace{3}{2}{\+}
  \BPDPlace{3}{3}{\NW}
  \BPDEnd
\endtikzpicture \qquad \qquad
\tikzpicture
  \BPDStart{3}{3}
  \BPDPlace{1}{1}{\SE}
  \BPDPlace{1}{2}{\+}
  \BPDPlace{1}{3}{\hor}
  \BPDPlace{2}{1}{\+}
  \BPDPlace{2}{2}{\+}
  \BPDPlace{2}{3}{\hor}
  \BPDPlace{3}{1}{\ver}
  \BPDPlace{3}{2}{\ver}
  \BPDPlace{3}{3}{\SE}
  \BPDEnd
\endtikzpicture \quad \raisebox{3em}{$\mapsto$} \quad 
\tikzpicture
  \BPDStart{3}{3}
  \BPDPlace{1}{1}{\blank}
  \BPDPlace{1}{2}{\ver}
  \BPDPlace{1}{3}{\SE}
  \BPDPlace{2}{1}{\hor}
  \BPDPlace{2}{2}{\+}
  \BPDPlace{2}{3}{\+}
  \BPDPlace{3}{1}{\SE}
  \BPDPlace{3}{2}{\+}
  \BPDPlace{3}{3}{\+}
  \BPDEnd
\endtikzpicture
\]
\end{footnotesize}
    \caption{Visualization of droop moves (left) and $K$-theoretic droop moves (right).}
    \label{fig:droop-moves}
\end{figure}

\subsection{The Variety of Spanning Line Configurations}
\label{sec:Xnk-cohomology}

In this section we review Pawlowski--Rhoades' computation of the cohomology ring of $\Xnk$. A nice summary of relevant constructions and results can be found in \cite{Rhoades2022GeneralizationsOfFlag}. We begin with the combinatorics of Fubini words, which index the cells in a natural stratification of $(\PP^{k-1})^n$ and $X_{n,k}$.

\label{subsec:Fubini-words}

\begin{definition}
Fix integers $1 \le k \le n$.  A \textit{word} of length $n$ with alphabet $[k]$ is a function $w:[n]\to[k]$.  We write its \textit{one-line notation} as the sequence $w_1 w_2 \dots w_n$, where $w_i := w(i)$.  Elements of $[n]$ are called \textit{positions} and elements of $[k]$ are called \textit{letters}.  The set of all such words is $[k]^n =: \Words$.  A word $w$ is a \textit{Fubini word} if it is surjective as a map $[n]\to[k]$, i.e., if its one-line notation contains every letter of $[k]$ at least once.  We denote the set of Fubini words by $\Fubini$.
\end{definition}

Fubini words are in bijection with \textit{ordered set partitions} of $[n]$ into $k$ nonempty blocks, via
\[
w \;\longmapsto\; \bigl(w^{-1}(1)\mid w^{-1}(2)\mid \cdots \mid w^{-1}(k)\bigr).
\]
In the special case $k = n$, Fubini words are precisely permutations in $\mathcal S_n$.  Thus Fubini words generalize permutations and inherit natural analogues of concepts such as inversions and Bruhat order \cite{BilleyRyan2024}.  The number of ordered set partitions of $[n]$ into $k$ blocks is the \textit{Fubini number} (\href{https://oeis.org/A000670}{OEIS~A000670}), and its value is
\[
\#\Fubini \;=\; k! \cdot \operatorname{Stir}(n,k),
\]
where $\operatorname{Stir}(n,k)$ denotes the Stirling number of the second kind (\href{https://oeis.org/A008277}{OEIS~A008277}).

\begin{definition}
    Let $w=w_1 \ldots w_n \in \Words$ be a word. A position $i\in [n]$ is an \textit{initial position} in $w$ if $w_h \neq w_i$ for all $h<i$. The letter appearing at an initial position is an \textit{initial letter}. Let $\operatorname{in}(w)=\{1 \leq$ $i \leq n\mid i$ is initial in $w\}$ be the set of all initial positions in $w$. Conversely, a position $i\in[n]$ is a \textit{redundant position} if there is some $h<i$ such that $w_h = w_i$. The letter at a redundant position is a \textit{redundant letter}. Let $\operatorname{re}(w)$ be the set of all redundant positions in $w$. 
\end{definition}
\begin{definition}\label{def:convexification}
    A word $w=w_1 \ldots w_n$ is called \textit{convex} if every copy of the letter $j \in[k]$ appears in consecutive positions. Given a word $w$, its \textit{convexification} $\conv(w)$ is the unique convex word with the same letters and the same ordering of initial letters. The \textit{associated permutation} $\sigma(w)$ to a convexification is the lex-minimal permutation of $[n]$ sending $\conv(w)$ back to $w$. 
\end{definition}
\begin{example}
\label{eg:convexification} The word $v = 2244433$ is convex, but $w=2442343$ is not, since the $2$'s are non-consecutive. The convexification of $w$ is $\conv(w) = 2244433.$ The initial positions of $w$ are $\operatorname{in}(w)=\{1,2,5\}$. The associated permutation is $\sigma(w) = 1423657,$ and we verify that $[2244433]\circ[1423657] = [2442343].$ 
\end{example}

\begin{definition}\label{def:standardization}
Let $w\in \Words$ be a word that uses $m$ letters out of $[k]$. Let $\left\{i_1<\cdots<i_{n-m}\right\} = [n]-\operatorname{in}(w)$ be the noninitial positions of $w$. Let  $\left\{j_1<\cdots<j_{k-m}\right\} = [k]-\left\{w_1, \ldots, w_n\right\}$ be the letters missing from $w$. The \textit{standardization} $\operatorname{std}(w)$ is a permutation in the symmetric group $ \mathcal{S}_{n+k-m}$ given as

$$
\operatorname{std}(w)_i=\left\{\begin{array}{ll}
w_i & \text { if } i \in \operatorname{in}(w) \\
k+r & \text { if } i=i_r \\
j_{i-n} & \text { if } i>n
\end{array} \text { for some } 1 \leq r \leq n-m. \right.
$$
\end{definition}
\begin{example}
Continuing \Cref{eg:convexification}, let $v $ be the convex word $2244433$. The standardization is $\operatorname{std}(v) = 25467381$. 
\end{example}

Given any word $w = w_1\dots w_n\in \Words$, we construct its \textit{pattern matrix} $\PM(w)$ to be the $k\times n$ matrix with entries $0,1,\star$ with entries $\PM(w)_{i,j}$ as in Algorithm~\ref{alg:pattern-matrix} in \Cref{apx:algorithms}.

\begin{example}
    Continuing with \Cref{eg:convexification}, the word $w =2442343$ has initial positions $1,2,5$ and pattern matrix $$
\operatorname{PM}(w)=\begin{pmatrix}
0 &0& 0& 0& 0& 0& 0\\
1 &\star &\star &1 &\star &\star &\star\\
0 &0 &0 &0 &1 &0 &1\\
0 &1 &1 &0 &0 &1 &\star\\
\end{pmatrix} .
$$
\end{example}

\begin{lemma}
  \cite[Observation 3.3]{PawlowskiRhoades2019} For $w\in \Words$, the number of $\star$'s in $\PM(w)$ is equal to $k\cdot n - \ell\left(\std(\conv(w))\right)$. 
\end{lemma}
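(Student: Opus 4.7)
The plan is to reduce to the convex case and then match the $\star$-count directly against the inversion count of the standardized permutation.

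First, I would establish that $\PM$ is equivariant under $\sigma(w)$: since $w = \conv(w) \circ \sigma(w)^{-1}$ as sequences, Algorithm~\ref{alg:pattern-matrix} produces $\PM(w)$ from $\PM(\conv(w))$ by applying the column permutation $\sigma(w)^{-1}$. A column permutation preserves the multiset of entries, so the number of $\star$'s is the same in both matrices, and since $\conv$ is idempotent, $\std(\conv(w)) = \std(\conv(\conv(w)))$. Hence it suffices to prove the identity for a convex word $v \in \Words$.

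For a convex $v$, each used letter occupies a contiguous block of positions, and the algorithm fills each column with a single pivot $1$, some $0$'s forced by initial letters that have already appeared to the left of the current block, and $\star$'s in the remaining rows. I would assemble these contributions into a column-by-column formula for the number of $\star$'s in $\PM(v)$. In parallel, I would compute $\ell(\std(v))$ by decomposing its inversions according to the three families of labels introduced in \Cref{def:standardization}: the initial letters of $v$ (which remain in $[k]$), the filler labels $k+r$ assigned to non-initial positions within $[n]$, and the missing letters $j_s$ placed at positions $n+1, \dots, n+k-m$. The inversions split into within-family and across-family contributions, each localized to positions of $v$ in an explicit way.

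The main obstacle will be the final matching step: exhibiting a natural bijection between the non-$\star$ entries of $\PM(v)$ and the inversions of $\std(v)$. Once set up, this bijection shows that the number of non-$\star$ entries equals $\ell(\std(v))$, and subtracting from the total $kn$ entries of $\PM(v)$ gives the formula. Building the bijection requires some care because each of the three label families contributes inversions with each of the others, and these cross-family contributions must be paired cleanly with the pivots and forced zeros appearing in $\PM(v)$. However, once the convex block structure of $v$ is exploited, the bookkeeping reduces to an elementary counting argument.
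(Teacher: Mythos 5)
Your reduction to the convex case is sound: each column of $\PM(w)$ is determined by the letter $w_j$, by whether $j$ is initial, and by the set of letters whose first occurrence precedes $j$ (resp.\ precedes the first occurrence of $w_j$), and all of these data are carried over by $\sigma(w)^{-1}$ between $w$ and $\conv(w)$, so the star count is invariant under convexification. The problem is the final matching step, and it is fatal: there is no bijection between the non-$\star$ entries of $\PM(v)$ and the inversions of $\std(v)$, because the two counts genuinely differ. Test the identity on the paper's own examples. For $w=12233$ with $n=5$, $k=3$ (the Reduction Algorithm example), $\PM(w)$ has $6$ stars and hence $9$ non-star entries, while $\std(\conv(w))=12435$ has length $1$, so the claimed formula would give $kn-\ell=14$ stars. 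For $w=2442343$ (the pattern-matrix example), one counts $6$ stars while $kn-\ell(\std(\conv(w)))=28-12=16$. Most non-star entries are forced zeros sitting in rows of letters that are unused or not yet seen, and these do not pair with inversions; the statement you are trying to prove is false as printed, so no amount of bookkeeping will produce the bijection.

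The column-by-column count you set up is nevertheless the right computation, and it reveals what the identity should be. For a convex word with distinct letters $a_1,\dots,a_m$ in order of appearance and multiplicities $c_1,\dots,c_m$, the algorithm gives $\#\star=\sum_t\#\{s<t: a_s<a_t\}+\sum_t(c_t-1)(t-1)$, while splitting the inversions of $\std(v)$ by the three label families gives $\ell(\std(v))=\operatorname{inv}(a_1\cdots a_m)+\sum_t(c_t-1)(m-t)+\sum_t\#\{\text{missing }j<a_t\}+(n-m)(k-m)$. Adding these shows that for Fubini words $\#\star = n(k-1)-\tbinom{k}{2}-\ell(\std(\conv(w)))$ (which is exactly what the cell-dimension formula $\dim C_w=n(k-1)-\ell$ requires, since $\dim U=\tbinom{k}{2}$), and that for non-Fubini words there is a further correction depending on which letters are used. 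You should first correct the target identity (by consulting the cited Observation 3.3) and then conclude by comparing the two block-indexed sums directly, rather than by seeking a bijection with all $kn-\#\star$ non-star entries.
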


\label{section:spanning-lines}

We now describe the cell decomposition of $\Xnk$ in terms of Fubini words and summarize the calculation of its integral cohomology $H^\bullet(\Xnk(\CC);\ZZ)$ following \cite{PawlowskiRhoades2019}. We first introduce some notation and refer the reader to \Cref{tab:words-and-matrices} for a summary thereof. Let $\Mat$ be the set of $k\times n$ matrices, viewed as the affine space $\kk^{k\times n}$. Let $\Nonzerocol$ be the subset of matrices, every column of which has at least one non-zero entry. Let $U$ be the group of lower unitriangular matrices of size $k\times k$ with $1$'s on the diagonal, and let $T = (\kk^\times )^n$ be the $n$-torus viewed as the group of $n\times n$ diagonal matrices with entries in $\kk^\times$. The affine space $\Mat$ has both a left $U$-action by elementary row operations and a right $T$-action by scaling each column, and this action restricts to $\Nonzerocol$. We may identify $\Pnk$ with the quotient $\Nonzerocol/T$. Each point $p= (p_1, \dots, p_n) \in \Pnk$ can be represented by a matrix $m\in \Nonzerocol$ whose columns are projective coordinates for $p_i\in \PP^{k-1}$ up to scaling by $T$. The \textit{Reduction Algorithm} (Algorithm 2 of \Cref{apx:algorithms}) of Pawlowski-Rhoades \cite[p9]{PawlowskiRhoades2019} leverages the $U\times T$-action to produce a canonical matrix representative of each point in $\Pnk$.

\begin{example}
    For the matrix $$m = \begin{pmatrix}
1 & 2 & 3 & 1 & 1 \\
2 & 1 & 3 & 0 & -1 \\
3 & -3 & 0 & 0 & 3
\end{pmatrix},$$ the Reduction Algorithm outputs $$\operatorname{R}(m) = \begin{pmatrix}
1 & -2 / 3 & -1 & 1 / 3 & 1 / 9 \\
0 & 1 & 1 & -2 / 3 & -1 / 3 \\
0 & 0 & 0 & 1 & 1
\end{pmatrix},\quad \operatorname{word}(m) = 12233. $$
\end{example}

The Reduction Algorithm outputs a unique canonical form regardless of the choice of the coset representative $mT$. Also recall that a point $p\in \Pnk$ corresponds to a coset of $T$. Therefore, we abuse notation and write $\operatorname{R}(p)$ in place of $\operatorname{R}(m)$ for $p\in \Pnk$. Similarly, we write $\operatorname{word}(p)$ for the other output of the algorithm regardless of the representative $mT$. 

    A matrix $m$ is said to \textit{fit the pattern} of a word $w$ if it agrees with $\PM(w)$ with the $\star$'s replaced by complex numbers. Let $P_w\subseteq \Nonzerocol$ be the set of matrices that fits the pattern of $w$. Generalizing \cite{BilleyRyan2024}, we make the following definition. 
    \begin{definition}
  For a word $w\in \Words$, the \textit{Pawlowski-Rhoades (PR) cell} $C_w$ consists of all points $p\in \Pnk$ for which $\operatorname{word}(p) = w$. Equivalently, it is the $U$-orbit $U P_w$ under the left unitriangular action. The \textit{Pawlowski-Rhoades variety} $X_w$ is the closure of $C_w$ inside $\Pnk$.
    \end{definition}

\begin{theorem}
    [Pawlowski--Rhoades \cite{PawlowskiRhoades2019} Theorem 5.12] \label{thm:PR-cell-decomposition} The $n$-fold product $\left(\mathbb{P}^{k-1}\right)^n$ admits a cell decomposition by PR cells $C_w$ for all words $w\in \Words$. The dimension of each cell $C_w$ is $$\binom{n}{2} + \text{ number of } \star\text{'s in } \PM(w) = n(k-1)-\ell\left(\std(\conv(w))\right).$$ 
\end{theorem}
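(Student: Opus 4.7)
The proof proceeds in three parts, which I outline below.

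For the partition $\Pnk=\bigsqcup_w C_w$, apply the Reduction Algorithm to any matrix representative of $p\in\Pnk$. Since the algorithm normalizes out the column-scaling $T$-action, its output $\operatorname{R}(p)\in\Nonzerocol$ and the associated word $\operatorname{word}(p)\in\Words$ depend only on $p$. The canonical form $\operatorname{R}(p)$ fits the pattern $\PM(\operatorname{word}(p))$ by construction, so the level sets $C_w=\operatorname{word}^{-1}(w)$ give a well-defined set-theoretic partition of $\Pnk$ indexed by $w\in\Words$.

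For the affine structure of each cell, fix $w\in\Words$ and examine the evaluation map
\[
\Phi_w\colon U\times P_w\longrightarrow C_w,\qquad (u,m)\longmapsto \pi(um),
\]
where $\pi\colon\Nonzerocol\to\Pnk$ is the $T$-quotient. Left $U$-multiplication preserves the pivot structure of $m$, so $\Phi_w$ is surjective onto $C_w$. Its fibers are prescribed by a solvable subgroup $G_w\subseteq U\times T$ stabilizing $P_w$ under the twisted action $(u_0,t)\cdot m=u_0 m t^{-1}$, and a column-by-column examination of $\PM(w)$ identifies $G_w$ explicitly as a unipotent subgroup. The quotient $(U\times P_w)/G_w$ is then an affine space, and combining the count $\dim C_w=\dim U+\dim P_w-\dim G_w$ with Observation~3.3 of Pawlowski--Rhoades, which relates the number of stars of $\PM(w)$ to $\ell(\std(\conv(w)))$, yields both sides of the dimension formula.

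For the closure property, order the words by the length function $\ell(\std(\conv(w)))$. The closure $\overline{C}_w$ is contained in the union of $C_{w'}$ with $\ell(\std(\conv(w')))\geq \ell(\std(\conv(w)))$, because specializing a star in $\PM(w)$ to zero or taking an algebraic limit of $U$-translates produces a matrix that now fits a more constrained pattern $\PM(w')$ of strictly larger length. Collecting these containments yields a filtration $\Pnk=X_N\supset X_{N-1}\supset\cdots\supset X_0=\varnothing$ by closed subvarieties realizing $\Pnk$ as a cellular variety.

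The main obstacle is the second part, namely the explicit identification of the stabilizer subgroup $G_w$ as a unipotent subgroup of the expected dimension. This delicate combinatorial computation tracks which $(u_0,t)\in U\times T$ preserve the pivot pattern $\PM(w)$, and is most naturally organized by the activation sequence of rows as the columns of $\PM(w)$ are processed from left to right; the resulting bookkeeping is exactly what produces the matching formula between the number of stars and the length $\ell(\std(\conv(w)))$.
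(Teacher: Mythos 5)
First, a point of comparison: the paper does not actually prove \Cref{thm:PR-cell-decomposition} --- it is imported verbatim from Pawlowski--Rhoades \cite{PawlowskiRhoades2019} (their Theorem 5.12 and the lemmas leading to it), so there is no in-paper argument to measure your proposal against. That said, your outline does reproduce the architecture of the original proof: the Reduction Algorithm partitions $\Pnk$ into the sets $C_w$, each $C_w$ is analyzed as the $U$-orbit of the pattern locus $P_w$ modulo $T$, and the cells are ordered by $\ell(\std(\conv(w)))$ to build the closed filtration.

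As written, though, the proposal has a genuine gap at its central step. A cell decomposition requires each $C_w$ to be \emph{isomorphic to an affine space}, and the sentence ``the quotient $(U\times P_w)/G_w$ is then an affine space'' does not follow from $G_w$ being unipotent: quotients of affine spaces by (even free) unipotent actions need not be affine spaces, nor even affine varieties. What makes the argument work is that the Reduction Algorithm supplies a canonical representative of each double coset $UmT$, i.e.\ an explicit section of the orbit map; one must then identify exactly which coordinates of $u\in U$ act freely on that slice and check that these, together with the $\star$-entries of $\PM(w)$, give polynomial coordinates on $C_w$. You explicitly defer this computation (``the main obstacle is the second part''), but it is the substance of the theorem rather than bookkeeping. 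Two further problems: (i) your dimension count $\dim U+\dim P_w-\dim G_w=\binom{k}{2}+(\text{number of }\star\text{'s})-\dim G_w$ can never yield the displayed value $\binom{n}{2}+(\text{number of }\star\text{'s})$ when $k<n$ (the displayed binomial appears to be a transcription error for $\binom{k}{2}$, and a complete proof must commit to, and verify, one formula); (ii) the closure containment $\overline{C}_w\subseteq\bigcup_{w'}C_{w'}$ over $w'$ with $\ell(\std(\conv(w')))\ge\ell(\std(\conv(w)))$ is asserted via ``specializing a star'' and ``taking an algebraic limit of $U$-translates,'' but a limit of $U$-translates of pattern matrices does not visibly fit any pattern; the honest argument uses upper semicontinuity of the rank conditions determining $\operatorname{word}(p)$ (equivalently, the degeneracy-locus description of $X_w$).
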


We move on to describe the cell structure on $\Xnk$. Let $\FR$ be the subset of $\Nonzerocol$ with full column rank. Each point in $\Xnk$ can be represented by a matrix in $\FR$. Recall that $\Nonzerocol$ admits a left $U$-action and right $T$-action. The subset $\FR$ is closed under the $T$-action, and we may identify $\Xnk$ with the quotient $\FR/T$. The variety $\Xnk$ can be constructed from $\Pnk$ by removing the union of closures of cells represented by matrices with rank less than $k$.

\begin{theorem}
    [Pawlowski--Rhoades \cite{PawlowskiRhoades2019} Theorem 5.12]
    Let $C_w$ be the PR cells for the cell decomposition in \Cref{thm:PR-cell-decomposition}. The variety $X_{n, k}$ is the open set 

$$
X_{n, k} = \underset{w \in \Fubini}{\bigcup \quad C_w} =\left(\mathbb{P}^{k-1}\right)^n-\underset{w \in \Words - \Fubini}{\bigcup \quad X_w}
$$
in $\Pnk$ by removing the union of PR varieties indexed by non-Fubini words, and the PR cell decomposition restricts to a cell structure on $\Xnk$.
\label{thm:Xnk-cell-decomposition}
\end{theorem}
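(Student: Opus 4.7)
The plan is to identify the cells of $\Pnk$ that lie in $X_{n,k}$ via the Reduction Algorithm, then to argue that the complement is cut out by closed rank conditions so that it equals the union of cell \emph{closures} indexed by non-Fubini words. By definition, a line configuration $\ell_\bullet$ lies in $\Xnk$ iff some (equivalently every) matrix representative $m \in \Nonzerocol$ of $\ell_\bullet$ has rank $k$. Since the Reduction Algorithm is built out of row operations in $U$ and column rescaling in $T$, the canonical form $\operatorname{R}(p)$ has the same rank as any representative of $p$, so the condition $p \in \Xnk$ translates to: $\operatorname{R}(p)$ has full row rank $k$.

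The heart of the proof is then to show that $\operatorname{R}(p)$ has rank $k$ iff $\operatorname{word}(p)$ is a Fubini word. Unpacking Algorithm~2 and comparing with the pattern matrix $\PM(w)$, one sees that $\operatorname{R}(p)$ fits the pattern of $w = \operatorname{word}(p)$: the $i$-th column has a $1$ in row $w_i$ and $0$'s in rows $w_1,\dots,w_{i-1}$ above it (with arbitrary $\star$ entries in the remaining rows). Consequently, the set of rows of $\operatorname{R}(p)$ that are not identically zero is exactly $\{w_1, w_2, \ldots, w_n\} \subseteq [k]$, and the corresponding initial columns form an echelon pivot structure showing that the rank of $\operatorname{R}(p)$ equals $|\{w_1, \ldots, w_n\}|$. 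This number is $k$ exactly when every letter of $[k]$ appears in $w$, i.e., when $w \in \Fubini$. Combined with the cell decomposition $\Pnk = \bigsqcup_{w \in \Words} C_w$ of \Cref{thm:PR-cell-decomposition}, this gives the first identification
\[
\Xnk \;=\; \bigsqcup_{w \in \Fubini} C_w.
\]

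For the complementary description, note that the locus in $\Pnk$ where matrix representatives fail to have rank $k$ is closed: locally on any affine chart it is cut out by the vanishing of all $k\times k$ minors. By the previous paragraph, this closed subvariety equals the disjoint union $\bigsqcup_{w \in \Words - \Fubini} C_w$. Since it is closed, it must contain the closure $X_w = \overline{C_w}$ of each non-Fubini cell, and it obviously equals the union of these closures from below; hence
\[
\Pnk - \Xnk \;=\; \bigcup_{w \in \Words - \Fubini} X_w,
\]
which is the stated complement. This has the useful consequence that the closure of any non-Fubini cell meets only non-Fubini cells, and symmetrically the closure of any Fubini cell intersected with $\Xnk$ stays a union of Fubini cells.

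Finally, the PR cell decomposition $\Pnk = X_{nk} \supset X_{nk-1} \supset \cdots$ restricts to $\Xnk$ by setting $X_i' := X_i \cap \Xnk$. Since the removed closed set $\Pnk - \Xnk$ is itself a union of cell closures of non-Fubini cells, each successive difference $X_i' - X_{i-1}'$ is the disjoint union of exactly the Fubini cells of dimension corresponding to the layer $X_i - X_{i-1}$; these are still affine spaces, so the restricted filtration is a genuine cell decomposition of $\Xnk$. The main obstacle in the argument is the careful bookkeeping in the second paragraph: extracting from Algorithm~2 that the letters appearing in $\operatorname{word}(p)$ are precisely the indices of the pivot rows of $\operatorname{R}(p)$, which requires verifying that the $\star$ entries below a pivot in $\PM(w)$ never introduce an additional pivot in a later column (guaranteed by the echelon clearing step of the algorithm).
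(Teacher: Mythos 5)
Your argument is correct. The paper itself gives no proof of this statement---it is quoted directly from Pawlowski--Rhoades \cite{PawlowskiRhoades2019}---but your reconstruction follows the expected route: rank is invariant under the $U\times T$-action used by the Reduction Algorithm, the canonical form $\operatorname{R}(p)$ has rank equal to the number of distinct letters of $\operatorname{word}(p)$ because the initial columns carry an echelon pivot structure while the rows of missing letters vanish, so $p\in\Xnk$ iff $\operatorname{word}(p)\in\Fubini$; and since the non-full-rank locus is closed (vanishing of $k\times k$ minors), it equals the union of the closures $X_w$ over non-Fubini $w$, from which the restricted cell structure on $\Xnk$ follows.
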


\begin{table}[ht]
    \centering
    \begin{tabular}{ccc} 
    \hline
         Words&  Matrices &Variety\\ 
         \hline
         $[0,k]^n$& 
     $\Mat$& $\bA^{k\times n}$\\ 
  $\Words$ & $\Nonzerocol$& $\Pnk$\\
  $\Fubini$&  $\FR$& $\Xnk$\\
  \hline
  \end{tabular}
    \caption{Correspondence between words and matrix representatives}
\label{tab:words-and-matrices}
\end{table}

The $i^\text{th}$ dual tautological line bundle over $\Pnk$ is the pullback of the line bundle $\cO_{\PP^k}(1)$ along the $i^\text{th}$ coordinate projection. Since $\Xnk$ is an open subset of $\Pnk$, these dual tautological line bundles pull back to $\Xnk$ along the open immersion. Let $x_i$ denote the first Chern class of the $i^\text{th}$ dual tautological line bundle over $\Xnk$. Pawlowski-Rhoades derives a presentation of the cohomology ring $H^\bullet(\Xnk(\CC);\ZZ)$ by identifying the closures of the PR-cells with degeneracy loci of flagged vector bundles. The proof then follows from the degeneracy loci formula for Schubert polynomials (\Cref{thm:Schubert-degeneracy-loci-formula}) and a rank argument.

\begin{definition}
   \label{def:schubert-poly-of-word}
    Pawlowski-Rhoades \cite{PawlowskiRhoades2019} defines the \textit{Schubert polynomial} of a word $w\in \Words$ to be \begin{equation}
        \label{eqn:schubert-poly-of-word}
        \mathfrak{S}_w(x_1, \dots, x_n) := \sigma^{-1}(w)\cdot \mathfrak{S}_{\operatorname{std}(\conv(w))}(x_1, \dots, x_n)
    \end{equation}
    where $\sinv \in S_n$ is the associated permutation of $w$ acting by permuting the variables.
\end{definition}

\begin{theorem}[Pawlowski--Rhoades \cite{PawlowskiRhoades2019} Theorem 5.12]
    \label{thm:H*(Xnk)}
There is an isomorphism \begin{equation}\label{eq:H*(Xnk)}
    H^\bullet(\Xnk(\CC); \ZZ) \cong \Rnk = \frac{\ZZ[x_1, x_2, \dots, x_n]}{(x_1^k, \dots, x_n^k, e_{n-k+1}, \dots, e_n)}.
\end{equation}
\end{theorem}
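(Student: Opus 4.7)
The plan is to follow the Pawlowski--Rhoades strategy in a way closely parallel to our $K$-theoretic proof of \Cref{thm:main}. The central steps are (i) a localization-type statement in cohomology for the open complement of cell closures, (ii) identification of each PR class $[X_w]$ with the Schubert polynomial $\mathfrak{S}_w$ of its word via Fulton's degeneracy loci formula, and (iii) a computation identifying the ideal generated by $\{\mathfrak{S}_w : w \notin \Fubini\}$ with $(e_{n-k+1},\dots,e_n)$.

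First, since $\Pnk$ is smooth and cellular (with cells the PR cells of \Cref{thm:PR-cell-decomposition}), and $\Xnk$ is the open complement in $\Pnk$ of the union $Z := \bigcup_{w \in \Words \setminus \Fubini} X_w$ of cell closures (\Cref{thm:Xnk-cell-decomposition}), the cohomological analog of \Cref{thm:K0(X-Z)=K0(X)/I(Z)}, which for cohomology is classical and is the input used by Pawlowski--Rhoades, gives
\begin{equation*}
   H^\bullet(\Xnk(\CC);\ZZ) \;\cong\; H^\bullet(\Pnk;\ZZ)\big/I,
\end{equation*}
where $I$ is the ideal generated by the cohomology classes $[X_w]$ for non-Fubini $w$. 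By the K\"unneth formula and the standard presentation of $H^\bullet(\PP^{k-1};\ZZ)$, the ambient cohomology ring is identified with $S_{n,k}= \ZZ[x_1,\dots,x_n]/(x_1^k,\dots,x_n^k)$, where $x_i$ is the first Chern class of the $i$-th dual tautological line bundle (cf.\ \Cref{eg:K0(Pn)}).

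Next, I would identify $[X_w]\in H^\bullet(\Pnk;\ZZ)$ with $\mathfrak{S}_w(x_1,\dots,x_n)$ for every $w\in \Words$. The pattern-matrix description of $X_w$ together with the $U\times T$-action realizes $X_w$ as a flagged degeneracy locus on $\Pnk$: take the trivial complete flag coming from the standard flag $0\subset\kk\subset\kk^2\subset\cdots\subset\kk^k$ and the descending flag obtained by successively quotienting by the tautological line subbundles on each factor. For the convexified word $\conv(w)$, the rank conditions cutting out $X_{\conv(w)}$ coincide, after standardization, with those of the Schubert cell for $\std(\conv(w))$, so \Cref{thm:Schubert-degeneracy-loci-formula} gives $[X_{\conv(w)}]=\mathfrak{S}_{\std(\conv(w))}(x_1,\dots,x_n)$. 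Passing from $\conv(w)$ to $w$ permutes the $\PP^{k-1}$-factors by $\sinv$, which acts on cohomology by permuting the $x_i$. Combining with \Cref{def:schubert-poly-of-word} yields $[X_w]=\mathfrak{S}_w(x_1,\dots,x_n)$ in $S_{n,k}$.

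It remains to show the ideal equality $I=(e_{n-k+1},\dots,e_n)$ inside $S_{n,k}$. For $I\supseteq(e_{n-k+1},\dots,e_n)$, I would use the family of ``single-letter-missing'' non-Fubini words: for each $1\le i\le k$, the convex word of length $n$ that uses every letter except $i$ (with the other letters in increasing order, each appearing exactly once or with one letter repeated as needed to fill length $n$) has standardization equal, up to relabeling, to a permutation of the form $v^{(j)}$ from \Cref{eg:grassmannian-schubert}, whose Schubert polynomial is $e_{n-j}$. Choosing these words systematically produces $e_{n-k+1},\dots,e_n$ as elements of $I$ (modulo the relations $x_i^k=0$). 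The reverse containment $I\subseteq(e_{n-k+1},\dots,e_n)$ is the hardest step and is most cleanly obtained by a rank argument: by \Cref{thm:cellular-finite-generation-of-cohomology} applied to the PR cell decomposition, $H^\bullet(\Xnk;\ZZ)$ is free abelian of rank $|\Fubini|=k!\operatorname{Stir}(n,k)$, and by Haglund--Rhoades--Shimozono \cite{HaglundRhoadesShimozono2018} the quotient $\Rnk=S_{n,k}/(e_{n-k+1},\dots,e_n)$ is also free abelian of the same rank. Hence the surjection $\Rnk \twoheadrightarrow S_{n,k}/I = H^\bullet(\Xnk;\ZZ)$ induced by the $\supseteq$ containment must be an isomorphism.

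The main obstacle in executing this plan is step (ii): verifying that $X_w$ really is the expected flagged degeneracy locus for the permutation $\std(\conv(w))$, and that the bookkeeping with $\sigma(w)$ matches \Cref{def:schubert-poly-of-word} on the nose. Once this geometric identification is pinned down, the rank argument circumvents any direct Gr\"obner or elimination-theoretic analysis of the ideal $I$, and the theorem follows by combining the three steps.
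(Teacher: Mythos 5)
Your proposal follows exactly the strategy the paper attributes to Pawlowski--Rhoades (and mirrors in its own $K$-theoretic proof of \Cref{thm:main}): localization for the complement of non-Fubini cell closures, identification of $[X_w]$ with $\mathfrak{S}_w$ via Fulton's degeneracy loci formula, and a rank comparison with $\Rnk$ using the freeness of the cellular cohomology and the Haglund--Rhoades--Shimozono computation. This is essentially the same approach as the cited proof; the only point deserving a little more care in execution is the passage from $e_{n-j}(x_1,\dots,x_{n-1})$ (the Schubert polynomials of the special words) to the generators $e_j(x_1,\dots,x_n)$ of the ideal defining $\Rnk$, which is standard bookkeeping.
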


\begin{theorem}[Proposition 5.9, 5.11 \cite{PawlowskiRhoades2019}]
    \label{thm:schubert-for-word} Let $w\in\Words$ be a word and $C_w$ be its Pawlowski-Rhoades cell. Under the isomorphism \eqref{eq:H*(Xnk)}, the cohomology class $[X_w]$ is mapped to the Schubert polynomial $\mathfrak{S}_w(x_1, \dots, x_n)$.  Furthermore, $\{\mathfrak{S}_w\mid w\in \Fubini\}$ is an additive basis of $H^\bullet(\Xnk(\CC);\ZZ)\cong \Rnk$.  \end{theorem}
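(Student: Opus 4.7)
The plan is to identify $[X_w]$ with $\mathfrak{S}_w$ by realizing $X_w$ as a flagged degeneracy locus inside $\Pnk$, applying Fulton's formula (\Cref{thm:Schubert-degeneracy-loci-formula}), and then obtaining the basis statement from the cellular decomposition of $\Xnk$ by Fubini cells. By functoriality of the cycle class map under the open immersion $\Xnk \hookrightarrow \Pnk$, it suffices to represent $[X_w] \in H^\bullet(\Pnk(\CC);\ZZ) \cong S_{n,k}$ by $\mathfrak{S}_w$; the image under the quotient $S_{n,k} \twoheadrightarrow \Rnk$ then represents $[X_w]$ in $H^\bullet(\Xnk(\CC);\ZZ) \cong \Rnk$.

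I would first handle the convex case. Suppose $w = v$ with $\sigma(v) = \mathrm{id}$. On $\Pnk$ there is a natural morphism of flagged bundles whose source is the constant flag attached to $\cO_{\Pnk}^{\oplus k}$ (filtered by standard coordinate subspaces) and whose target is built from the dual tautological line bundles $\cL_i^\vee$ attached to the $n$ factors. The pattern matrix $\PM(v)$ encodes the linear rank conditions cutting out $C_v$ inside $\Pnk$, and a direct comparison shows that these rank inequalities coincide with those of the flagged degeneracy locus $\Omega_{\std(v)}(f)$ for this natural morphism. The dimension count $\binom{n}{2} + (\text{number of }\star\text{'s}) = n(k-1) - \ell(\std(v))$ from \Cref{thm:PR-cell-decomposition} confirms that $X_v$ has the expected codimension $\ell(\std(v))$, so \Cref{thm:Schubert-degeneracy-loci-formula} delivers $[X_v] = \mathfrak{S}_{\std(v)}(x_1,\dots,x_n)$ in $S_{n,k}$, where $x_i = c_1(\cL_i^\vee)$.

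For a general $w \in \Words$, the symmetric group $\mathcal{S}_n$ acts on $\Pnk$ by permuting the $n$ projective factors; this action both permutes the $x_i$'s in cohomology and sends $X_{\conv(w)}$ to $X_w$ via $\sinv$. Combining this with the convex case yields $[X_w] = \sinv \cdot \mathfrak{S}_{\std(\conv(w))}(x_1,\dots,x_n) = \mathfrak{S}_w(x_1,\dots,x_n)$ by \Cref{def:schubert-poly-of-word}. For the basis claim, \Cref{thm:Xnk-cell-decomposition} gives a cell decomposition of $\Xnk$ indexed by $w \in \Fubini$, and \Cref{thm:cellular-finite-generation-of-cohomology} implies that the fundamental classes $[X_w]$ for $w \in \Fubini$ form a free $\ZZ$-basis of $H^\bullet(\Xnk(\CC); \ZZ)$. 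Identifying each $[X_w]$ with $\mathfrak{S}_w$ via the first part then shows $\{\mathfrak{S}_w : w \in \Fubini\}$ is an additive basis of $\Rnk$.

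I expect the main obstacle to be the degeneracy-locus identification in the convex case: making the natural flagged bundle morphism on $\Pnk$ precise and verifying that the rank inequalities imposed by $\PM(v)$ match the rank table of $\std(v)$, after accounting for the canonical form produced by the Reduction Algorithm. Once this linear-algebraic dictionary is in place, Fulton's formula, the $\mathcal{S}_n$-action on factors, and the cellular basis theorem carry the rest mechanically.
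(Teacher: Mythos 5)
Your proposal is correct and follows essentially the same route as the cited Pawlowski--Rhoades argument, which this paper quotes rather than reproves and which it mirrors in its own proof of \Cref{thm:grothendieck-poly-of-word}: degeneracy loci plus Fulton's formula for convex words, transport by the factor-permuting $\mathcal{S}_n$-action for general words, and the cellular basis theorem for the basis claim. Two small bookkeeping points: the paper's flagged morphism runs the other way (source $\mathcal E_j=\cL_1\oplus\cdots\oplus\cL_j$ built from the tautological bundles, target the trivial flag $\mathcal F_\bullet$), which affects which of $\std(w)$ or $\std(w)^{-1}$ appears before applying the symmetry of (double) Schubert polynomials, and your one-shot claim that $\sinv$ carries $X_{\conv(w)}$ to $X_w$ should be justified by checking that $\PM(w)$ is the column permutation of $\PM(\conv(w))$ (Pawlowski--Rhoades instead phrase this as an induction on $\ell(\std(w))$ through adjacent swaps).
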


\section{The Grothendieck Group of Spanning Line Configurations}
\label{sec:K0-of-Xnk}

In this section we identify the Grothendieck group $K_0(X_{n,k})$ with the generalized coinvariant algebra $R_{n,k}$ and complete the proof of our main theorem.  The argument has three ingredients.  First, we prove some general facts about $G_0$ and $K_0$ of cellular varieties, including a localization statement for complements of unions of cell closures (\Cref{thm:K0(X-Z)=K0(X)/I(Z)}).  Second, we identify the $K$-theory classes of Pawlowski--Rhoades varieties $X_w$ inside $K_0((\PP^{k-1})^n)$ with Grothendieck polynomials of words (\Cref{thm:grothendieck-poly-of-word}).  Finally, we compare the resulting quotient to the known presentation of $R_{n,k}$ and use a rank argument to conclude the proof of \Cref{thm:main}.

\subsection{Key lemmas for cellular varieties}
\label{subsec:key-lemmas-for-cellular}

We begin by extending the cellular freeness statement \Cref{thm:cellular-finite-generation-of-cohomology} from homology and Chow groups to the zeroth $G$-theory.

\begin{theorem}
\label{thm:G0-free-abelian-when-cellular}
Let $X$ be a (possibly singular) cellular variety with cells $\{C_i\mid i\in \mathcal{I}\}$ indexed by a finite set $\mathcal{I}$.  Then $G_0(X)$ is a free abelian group generated by the classes of the structure sheaves of cell closures $\{[\cO_{\overline{C}_i}]\mid i\in \mathcal I\}$.
\end{theorem}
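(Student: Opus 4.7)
The plan is to induct on the length of the cellular filtration $\emptyset = X_{-1}\subset X_0\subset \cdots \subset X_n = X$, peeling off one stratum at a time via Quillen's localization sequence (\Cref{thm:K0-localization}). The base case is $G_0$ of a finite disjoint union of affine spaces, which is a free abelian group on structure sheaves by homotopy invariance (\Cref{thm:K0-homotopy}).

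For the inductive step, write $W_i := X_i - X_{i-1} = \bigsqcup_j C_{ij}$, with each $C_{ij}\cong \bA^{d_{ij}}$. The right-exact localization sequence
\[ G_0(X_{i-1}) \xrightarrow{\iota_*} G_0(X_i) \xrightarrow{j^*} G_0(W_i) \to 0 \]
has target free on $\{[\cO_{C_{ij}}]\}_j$ by the base-case analysis. I would first construct a section $s_i: G_0(W_i)\to G_0(X_i)$ by the rule $[\cO_{C_{ij}}]\mapsto [\cO_{\overline{C}_{ij}}]$; this is a genuine section because $\overline{C}_{ij}-C_{ij}\subseteq X_{i-1}$ forces $\overline{C}_{ij}\cap W_i = C_{ij}$ and hence $j^*\circ s_i = \mathrm{id}$. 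Splitting gives $G_0(X_i)\cong \operatorname{im}(s_i)\oplus \operatorname{im}(\iota_*)$, and combined with the inductive hypothesis the cell closure classes $\{[\cO_{\overline{C}}] : C \subseteq X_i\}$ span $G_0(X_i)$.

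The main obstacle is promoting this spanning set to a free $\ZZ$-basis, which amounts to injectivity of $\iota_*$. I would resolve this by extending Quillen's exact sequence one term to the left,
\[ G_1(X_i) \xrightarrow{j^*_1} G_1(W_i) \xrightarrow{\partial} G_0(X_{i-1}) \xrightarrow{\iota_*} G_0(X_i), \]
and constructing a section at the $G_1$ level. By homotopy invariance $G_1(W_i)\cong \bigoplus_j K_1(\ZZ) = (\ZZ/2)^{r_i}$, with generators represented by the pairs $[\cO_{C_{ij}},-1]$. The rule $[\cO_{C_{ij}},-1]\mapsto [\cO_{\overline{C}_{ij}},-1]$ lifts each generator into $G_1(X_i)$, so $j^*_1$ is surjective, the connecting map $\partial$ vanishes, $\iota_*$ is injective, and the induction closes. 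An alternative route, if one prefers to bypass higher $G$-theory, is to apply the Baum--Fulton--MacPherson Riemann--Roch transformation $\tau_X:G_0(X)_\QQ\to CH_\bullet(X)_\QQ$: because $\tau_X([\cO_{\overline{C}}]) = [\overline{C}] + (\text{lower-dimensional corrections})$, the cell classes map unitriangularly into the free basis of $CH_\bullet(X)_\QQ$ furnished by \Cref{thm:cellular-finite-generation-of-cohomology}, forcing linear independence over $\QQ$ and therefore over $\ZZ$.
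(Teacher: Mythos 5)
Your proposal is correct, and your primary route is genuinely different from the paper's. The paper inducts on the number of cells: it isolates a cell of minimal dimension, proves that cell is closed, applies the right-exact localization sequence to get $G_0(X)\cong \ZZ^{\oplus(n-1)}\oplus A$ with $A$ a quotient of $\ZZ$, and then pins down $A\cong\ZZ$ by a rank count over $\QQ$, comparing against $CH_\bullet(X)\otimes\QQ$ via the Baum--Fulton--MacPherson Riemann--Roch isomorphism. You instead induct on the filtration and split the localization sequence by exhibiting explicit sections $[\cO_{C}]\mapsto[\cO_{\overline{C}}]$ in degrees $0$ and $1$; the degree-one section forces the connecting map to vanish and gives injectivity of $\iota_*$ directly, with no recourse to Riemann--Roch. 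This is arguably cleaner and stays entirely inside $G$-theory, but it leans on the degree-one part of Quillen's localization sequence, which the paper's \Cref{thm:K0-localization} does not state, so you would need to import that. Your fallback argument via unitriangularity of $\tau_X$ against the cycle basis of $CH_\bullet(X)_\QQ$ from \Cref{thm:cellular-finite-generation-of-cohomology} is essentially the paper's argument, phrased as linear independence rather than as a rank count; combined with the spanning statement it closes the induction just as well.

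One slip to correct in the $G_1$ step: for cells over the base field $\kk$, homotopy invariance gives $G_1(C_{ij})\cong K_1(\kk)=\kk^\times$, not $K_1(\ZZ)=\ZZ/2$. This does not damage the argument: the generators of $G_1(W_i)\cong\bigoplus_j \kk^\times$ are the classes $[\cO_{C_{ij}},u]$ for units $u\in\kk^\times$, each of which lifts to $[\cO_{\overline{C}_{ij}},u]\in G_1(X_i)$ because $\overline{C}_{ij}\cap W_i=C_{ij}$ and flat restriction is exact; hence $j^*_1$ is still surjective, $\partial$ vanishes, and $\iota_*$ is injective as you claim.
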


\begin{proof}
We proceed by induction on the number of cells $n := |\mathcal I|$.  If $X$ has a single cell, then $X \cong \bA^r$ for some $r$, and homotopy invariance (\Cref{thm:K0-homotopy}) gives $G_0(\bA^r) \cong \ZZ\cdot[\cO_{\bA^r}]$, so the statement holds.

Now assume the result for all cellular varieties with fewer than $n$ cells, and let $X$ have $n$ cells.  We first show that $X$ contains at least one closed cell.  Pick a cell $C_0$ of minimal dimension, and suppose for contradiction that $C_0$ is not closed.  Then there exists a point $x\in \overline{C_0} - C_0$, which lies in some other cell $C_1$ since $X$ is the disjoint union of its cells.  The closure $\overline{C_0}$ is irreducible and locally closed, and any irreducible subset properly contained in it must have smaller dimension, so $\dim C_1 < \dim C_0$, contradicting the minimality of $\dim C_0$.  Hence $C_0$ is closed in $X$.

Let $i:C_0\hookrightarrow X$ and $j:U:=X- C_0\hookrightarrow X$ be the inclusions.  As $i$ is a closed immersion, the localization sequence in $G$-theory (\Cref{thm:K0-localization}) yields a right exact sequence
\[
G_0(C_0) \xrightarrow{i_*} G_0(X) \xrightarrow{j^*} G_0(U) \longrightarrow 0.
\]
Since $C_0\cong \bA^{d_0}$ is affine, homotopy invariance gives $G_0(C_0) \cong \ZZ\cdot[\cO_{C_0}]$.  The restriction of the cell decomposition to $U$ has cells $C_i$ for $i\in \mathcal I-\{0\}$, so by the induction hypothesis $G_0(U)$ is freely generated by $\{[\cO_{\overline{C}_i}]\mid i\neq 0\}$ and hence $G_0(U)\cong \ZZ^{\oplus (n-1)}$.  Thus $G_0(X)$ fits into a short exact sequence of abelian groups
\[
0\longrightarrow \ZZ \xrightarrow{i_*} G_0(X) \xrightarrow{j^*} \ZZ^{\oplus (n-1)} \longrightarrow 0,
\]
so $G_0(X) \cong \ZZ^{\oplus (n-1)}\oplus A$ for some abelian group $A$.

To determine $A$, we compare ranks after tensoring with $\QQ$.  Since $X$ is cellular, its homology is free abelian on the closures of its cells, so $H_\bullet(X)\cong \ZZ^{\oplus n}$.  Baum-Fulton-MacPherson \cite[\textsc{III.I}]{BFM1975} constructed a Riemann-Roch isomorphism $\tau: G_0(X)\otimes \QQ \xrightarrow{\;\cong\;} CH_\bullet(X)\otimes \QQ$ for arbitrary varieties, and for cellular $X$ the cycle class map identifies $CH_\bullet(X)\otimes \QQ$ with $H_\bullet(X)\otimes \QQ \cong \QQ^{\oplus n}$ (\Cref{thm:cellular-finite-generation-of-cohomology}).  Hence $G_0(X)\otimes \QQ \cong \QQ^{\oplus n}$, which forces $A\otimes\QQ\cong \QQ$.  The only possibility compatible with the above exact sequence is $A\cong \ZZ$.  Thus $G_0(X)$ is a free abelian group of rank $n$, generated by the $n$ classes $[\cO_{\overline{C}_i}]$, as claimed.
\end{proof}

\begin{corollary}
\label{cor:G0-localization-exact}
Let $X$ be a cellular variety and $Z\subseteq X$ a union of cell closures.  Then the localization sequence
\[
0\longrightarrow G_0(Z) \xrightarrow{i_*} G_0(X) \xrightarrow{j^*} G_0(X- Z) \longrightarrow 0
\]
is exact.
\end{corollary}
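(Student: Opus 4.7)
The plan is to use Theorem~\ref{thm:K0-localization} to get the right-exactness and surjectivity of $j^*$ for free, reducing the entire proof to showing that $i_*\colon G_0(Z)\to G_0(X)$ is injective. The strategy is to exploit that both $Z$ and $X$ are cellular, apply Theorem~\ref{thm:G0-free-abelian-when-cellular} to each, and show $i_*$ sends a basis into a basis.

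Concretely, I would first observe that since $Z$ is a union of cell closures, it inherits a cell decomposition from $X$ whose cells are precisely $\{C_i : C_i\subseteq Z\}$. For each such cell the closure $\overline{C}_i^Z$ taken inside $Z$ agrees with the closure $\overline{C}_i^X$ taken inside $X$: indeed $Z$ is closed in $X$, and by hypothesis each $\overline{C}_i^X$ is already contained in $Z$. Theorem~\ref{thm:G0-free-abelian-when-cellular} then yields the explicit free abelian presentations
\[
G_0(Z)=\bigoplus_{C_i\subseteq Z}\ZZ\cdot [\cO_{\overline{C}_i}],\qquad G_0(X)=\bigoplus_{C_i\subseteq X}\ZZ\cdot [\cO_{\overline{C}_i}].
\]

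The crucial identity is that for the closed immersion $i$, pushforward of a structure sheaf is the structure sheaf of its image, so $i_*[\cO_{\overline{C}_i^Z}]=[\cO_{\overline{C}_i^X}]$. Thus $i_*$ sends the distinguished basis of $G_0(Z)$ to the subset of the distinguished basis of $G_0(X)$ indexed by cells in $Z$. Since distinct elements of a basis of a free abelian group are linearly independent, $i_*$ is injective, completing the proof.

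I expect the main subtlety to lie in the bookkeeping at the very start, namely verifying that the cells of $Z$ and their closures behave as advertised; this ultimately rests on the hypothesis that $Z$ is a union of cell closures together with the frontier behavior of cellular filtrations. If one wishes to sidestep any reliance on the cellular structure of $X-Z$, there is a backup route using rank-counting: the ranks from Theorem~\ref{thm:G0-free-abelian-when-cellular} satisfy $\operatorname{rank} G_0(X)=\operatorname{rank} G_0(Z)+\operatorname{rank} G_0(X-Z)$, and combining this with right-exactness forces $\ker i_*$ to have rank zero; since $G_0(Z)$ is free abelian hence torsion-free, the kernel must vanish.
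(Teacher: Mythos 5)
Your proposal is correct and matches the paper's argument: the paper likewise obtains right-exactness from \Cref{thm:K0-localization} and deduces injectivity of $i_*$ from the freeness statement of \Cref{thm:G0-free-abelian-when-cellular} via exactly the rank-counting you give as your backup route. Your primary argument — that $i_*$ carries the distinguished basis of $G_0(Z)$ onto a subset of the distinguished basis of $G_0(X)$ because pushforward along a closed immersion preserves structure sheaves — is a slightly sharper variant of the same idea and is also valid, subject to the same implicit assumption (made by the paper as well) that $Z$ inherits a cell decomposition from $X$.
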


\begin{proof}
Apply the argument in the proof of \Cref{thm:G0-free-abelian-when-cellular} to the pair $(X,Z)$ and note that $G_0(Z)$ and $G_0(X)$ are free abelian with bases given by the structure sheaves of cell closures.  The right exactness comes from \Cref{thm:K0-localization}. The rank argument shows that the map $G_0(Z)\to G_0(X)$ must be injective.
\end{proof}

We now prove the quotient description of $K_0$ for complements of unions of cell closures.

\begin{proof}[Proof of \Cref{thm:K0(X-Z)=K0(X)/I(Z)}]
Let $X$ be smooth and cellular, and let $Z\subseteq X$ be a union of cell closures.  By \Cref{thm:G0-free-abelian-when-cellular}, both $G_0(Z)$ and $G_0(X)$ are free abelian with bases given by the classes $[\cO_{\overline{C}}]$ of cell closures in $Z$ and in $X$, respectively.  Since $X$ and $X- Z$ are smooth, we may identify $K_0(X)\cong G_0(X)$ and $K_0(X- Z)\cong G_0(X- Z)$.

By \Cref{cor:G0-localization-exact} we have a short exact sequence
\[
0 \longrightarrow G_0(Z) \xrightarrow{i_*} K_0(X) \xrightarrow{j^*} K_0(X- Z) \longrightarrow 0.
\]
The map $j^*$ is a ring homomorphism by functoriality, and its kernel is generated as an abelian group by the image of $G_0(Z)$, which is the subgroup of $K_0(X)$ generated by the fundamental classes $[\cO_{\overline{C}}]$ of all cell closures $\overline{C}\subseteq Z$.  By definition, this subgroup is the ideal $I(Z)$, so the exact sequence identifies
\[
K_0(X- Z) \;\cong\; K_0(X)/I(Z)
\]
as rings.
\end{proof}

\begin{example}
Let $X = \PP^n$ with the standard cell decomposition $\PP^n = \coprod_{k=0}^n \bA^k$ as in \Cref{eg:K0(Pn)}, and let $Z = \PP^i = \overline{\bA^i}$ be the closure of the open cell of dimension $i$.  In $K_0(\PP^n)\cong \ZZ[\eta]/(\eta^{n+1})$, the class of $\PP^i$ is $\eta^{n-i}$, so \Cref{thm:K0(X-Z)=K0(X)/I(Z)} gives
\[
K_0(\PP^n- \PP^i) \;\cong\; \frac{\ZZ[\eta]/(\eta^{n+1})}{(\eta^{n-i})} \;\cong\; \frac{\ZZ[\eta]}{(\eta^{n-i})}.
\]
On the other hand, $\PP^n- \PP^i$ is an affine bundle over the complementary linear subspace $\PP^{n-i-1}$, and homotopy invariance implies $K_0(\PP^n- \PP^i)\cong K_0(\PP^{n-i-1})\cong \ZZ[\eta]/(\eta^{n-i})$, in agreement with the above quotient description.
\end{example}

\subsection{The Grothendieck polynomial of a word}

Recall from \Cref{thm:Xnk-cell-decomposition} that $X_{n,k}$ is obtained from $\Pnk$ by removing the Pawlowski--Rhoades varieties $X_w$ for non-Fubini words $w\in \Words- \Fubini$.  By \Cref{thm:K0(X-Z)=K0(X)/I(Z)}, the Grothendieck group $K_0(X_{n,k})$ is isomorphic to $K_0(\Pnk)$ modulo the ideal generated by the classes $[\cO_{X_w}]$ for $w\notin\Fubini$.  In this subsection we identify these classes with Grothendieck polynomials of words as defined in \Cref{def:grothendieck-poly-of-word}.

\begin{proof}[Proof of \Cref{thm:grothendieck-poly-of-word}]
We follow the strategy of Pawlowski--Rhoades \cite{PawlowskiRhoades2019}.  First suppose that $w\in \Words$ is a convex word.  Let $\mathcal{L}_1,\dots,\mathcal{L}_n$ denote the tautological line bundles on $\Pnk = (\PP^{k-1})^n$, and for each $j\in [n]$ set
\[
\mathcal E_j := \mathcal{L}_1 \oplus \cdots \oplus \mathcal{L}_j.
\]
For each $i\in [k]$, let $\mathcal F_i$ be the trivial rank-$i$ vector bundle equipped with the natural surjection $\mathcal F_i\to \mathcal F_{i-1}$ given by projection onto the first $i-1$ coordinates, and set $\mathcal F_0 := 0$.  Consider the bundle map $f:\mathcal E_n \to \mathcal F_k$ defined on fibers by
\[
f\bigl((v_1,\dots,v_n)\bigr) := v_1+\cdots+v_n \in \kk^k.
\]
For each pair $(i,j)$, the composition
\[
\mathcal E_j \hookrightarrow \mathcal E_n \xrightarrow{f} \mathcal F_k \twoheadrightarrow \mathcal F_i
\]
records the span of the first $j$ lines inside $\kk^k$.  As explained in \cite{PawlowskiRhoades2019}, the rank conditions determined by the convex word $w$ cut out a flagged degeneracy locus $\Omega_w(f)$ that coincides with the PR variety $X_w$, and the codimension of $X_w$ equals the length of the permutation $\std(w)$.

Since $X_w$ has the expected codimension, the Fulton-Lascoux degeneracy loci formula (\Cref{thm:Grothendieck-degeneracy-loci-formula}) implies that in $K_0(\Pnk)$ we have
\[
[\cO_{X_w}]
= [\cO_{\Omega_w(f)}]
= \mathfrak{G}_{\std(w)^{-1}}(a_1^K,\dots,a_k^K \mid b_1^K,\dots,b_n^K),
\]
where
\[
a_i^K = c_1^K\bigl(\ker(\mathcal F_i\to \mathcal F_{i-1})\bigr), \qquad
b_j^K = c_1^K(\mathcal E_j/\mathcal E_{j-1}).
\]
In our situation each $\mathcal F_i$ is trivial, so $a_i^K = 0$ for all $i$.  On the other hand, in the quotient presentation
\[
K_0(\Pnk) \cong \ZZ[x_1,\dots,x_n]/(x_1^k,\dots,x_n^k)
\]
the variable $x_j$ is $c_1^K(\mathcal{L}_j^\vee) = c_1^K\bigl((\mathcal E_j/\mathcal E_{j-1})^\vee\bigr)$, so $b_j^K = -x_j$.  It follows that
\[
[\cO_{X_w}] = \mathfrak{G}_{\std(w)^{-1}}(0,\dots,0 \mid -x_1,\dots,-x_n).
\]
By the basic symmetry of Grothendieck polynomials (see, e.g., \cite[Lemma~5.3]{MR4681291}), we have
\[
\mathfrak{G}_{\std(w)^{-1}}(0 \mid -x_1,\dots,-x_n)
= \mathfrak{G}_{\std(w)}(x_1,\dots,x_n),
\]
so for convex $w$ the class $[\cO_{X_w}]$ is represented by the ordinary Grothendieck polynomial $\mathfrak{G}_{\std(w)}$.

For a general word $w\in\Words$, we pass to its convexification $\conv(w)$ and use the permutation $\sigma(w)\in S_n$ sending $\conv(w)$ back to $w$, as in \Cref{def:grothendieck-poly-of-word}.  Pawlowski-Rhoades \cite[Proposition~5.11]{PawlowskiRhoades2019} show, by induction on the length of $\std(w)$, that $X_w$ is obtained from $X_{\conv(w)}$ by a sequence of geometric operations compatible with the natural $S_n$-action permuting the variables $x_i$ in $K_0(\Pnk)$.  The same inductive argument, together with the convex case treated above, implies that
\[
[\cO_{X_w}] = \sigma(w)^{-1}\cdot \mathfrak{G}_{\std(\conv(w))}(x_1,\dots,x_n).
\]
By \Cref{def:grothendieck-poly-of-word} this is precisely the Grothendieck polynomial $\mathfrak{G}_w(x_1,\dots,x_n)$, which completes the proof.
\end{proof}

\subsection{The Grothendieck group of spanning line configurations}

We now assemble the results of the previous subsections to prove \Cref{thm:main}.  The cohomological part of the statement is due to Pawlowski--Rhoades and was recalled in \Cref{thm:H*(Xnk)}. The new content here is the identification of $K_0(X_{n,k})$ with $R_{n,k}$ and the compatibility of this identification with Chern classes of tautological line bundles.

\begin{proof}[Proof of \Cref{thm:main}]
We begin with the cohomological and Chow-theoretic identifications.  Since $X_{n,k}$ is smooth and cellular, the cycle class map
\[
cl : CH^\bullet(X_{n,k}) \longrightarrow H^\bullet(X_{n,k}(\CC);\ZZ)
\]
is a degree-doubling isomorphism of graded rings (\Cref{thm:cycle-class-map}).  Pawlowski-Rhoades \cite[Thm.~5.12]{PawlowskiRhoades2019} identify $H^\bullet(X_{n,k}(\CC);\ZZ)$ with the generalized coinvariant algebra $R_{n,k}$, so we obtain isomorphisms of graded rings
\[
CH^\bullet(X_{n,k}) \;\cong\; H^\bullet(X_{n,k}(\CC);\ZZ) \;\cong\; R_{n,k}.
\]

Next we construct a surjection from $R_{n,k}$ onto $K_0(X_{n,k})$.  The variety $X_{n,k}$ is obtained from $\Pnk$ by removing the PR varieties $X_w$ for non-Fubini words $w\in \Words- \Fubini$.  Applying \Cref{thm:K0(X-Z)=K0(X)/I(Z)} with $X=\Pnk$ and $Z = \bigcup_{w\notin \Fubini} X_w$ shows that
\[
K_0(X_{n,k}) \;\cong\; \frac{K_0(\Pnk)}{I([\cO_{X_w}] \mid w\notin \Fubini)}.
\]
Using the product presentation $K_0(\Pnk)\cong \ZZ[x_1,\dots,x_n]/(x_1^k,\dots,x_n^k)$ (\Cref{eg:K0(Pn)}), and the identification of $[\cO_{X_w}]$ with the Grothendieck polynomial $\mathfrak{G}_w$ from \Cref{thm:grothendieck-poly-of-word}, we see that $K_0(X_{n,k})$ is the quotient of
\[
S_{n,k} := \ZZ[x_1,\dots,x_n]/(x_1^k,\dots,x_n^k)
\]
by the ideal generated by $\{\mathfrak{G}_w \mid w\notin\Fubini\}$.

On the other hand, the generalized coinvariant algebra $R_{n,k}$ is defined as
\[
R_{n,k} = \frac{\ZZ[x_1,\dots,x_n]}{(x_1^k,\dots,x_n^k,e_{n-k+1},\dots,e_n)}.
\]
Pawlowski-Rhoades \cite{PawlowskiRhoades2019} showed that the Schubert polynomials of the special words $w^{(i)} := 12\cdots \widehat{i}\cdots kk\cdots k$ map to alternating sums of elementary symmetric polynomials $e_{n-i+1},\dots,e_n$.  In our $K$-theoretic setting, the corresponding PR varieties $X_{w^{(i)}}$ have classes
\[
[\cO_{X_{w^{(i)}}}] \mapsto \mathfrak{G}_{v^{(i)}}(x_1,\dots,x_n)
\]
where $v^{(i)} := 12\cdots \widehat{i}\cdots ni$ is the associated Grassmannian permutation and $\mathfrak{G}_{v^{(i)}}$ is an alternating sum of $e_j$ for $n+1-i\le j\le n$ with leading term $e_{n-i+1}$; see \Cref{eg:grassmannian-grothendieck}.  Writing
\[
\mathfrak{G}_{v^{(i)}} = e_{n-i+1} + \sum_{n+1-i<j\le n} (-1)^{i+j-n-1} g_j^i e_j,
\]
the change of generators from $\{e_{n-k+1},\dots,e_n\}$ to $\{\mathfrak{G}_{v^{(n-k+1)}},\dots,\mathfrak{G}_{v^{(n)}}\}$ is unitriangular: starting from $i=n$, we have $\mathfrak{G}_{v^{(n)}} = e_n$, and at each lower index $i$ one can recover $e_{n-i+1}$ from $\mathfrak{G}_{v^{(i)}}$ by subtracting a linear combination of the already-present higher $e_j$’s.  In particular, the higher-order terms in the $\mathfrak{G}_{v^{(i)}}$ cancel inductively inside the ideal, and the ideal generated by the $\mathfrak{G}_{v^{(i)}}$ coincides with the ideal generated by $e_{n-k+1},\dots,e_n$.  Since each $w^{(i)}$ is non-Fubini, the classes $[\cO_{X_{w^{(i)}}}]$ lie in the ideal $I([\cO_{X_w}]\mid w\notin\Fubini)$, and hence
\[
I_{n,k} := (x_1^k,\dots,x_n^k,e_{n-k+1},\dots,e_n)
\]
is contained in the kernel of the natural map $\ZZ[x_1,\dots,x_n]\to K_0(X_{n,k})$.  This yields a surjective ring homomorphism
\[
R_{n,k} = \frac{\ZZ[x_1,\dots,x_n]}{I_{n,k}} \;\twoheadrightarrow\; K_0(X_{n,k}).
\]

Finally, we compare ranks.  Since $X_{n,k}$ is cellular, \Cref{thm:G0-free-abelian-when-cellular} implies that $K_0(X_{n,k})$ is a free abelian group with basis given by the classes $[\cO_{X_w}]$ for Fubini words $w\in \Fubini$, so
\[
\operatorname{rank}_\ZZ K_0(X_{n,k}) = \#\Fubini = k!\,\operatorname{Stir}(n,k).
\]
Pawlowski-Rhoades \cite[Thm.~5.12]{PawlowskiRhoades2019} show that $R_{n,k}$ has the same rank as a free abelian group.  Thus the surjection $R_{n,k}\twoheadrightarrow K_0(X_{n,k})$ is an isomorphism of rings.  Combining this with the cohomological identifications above, we obtain
\[
K_0(X_{n,k}) \;\cong\; CH^\bullet(X_{n,k}) \;\cong\; H^\bullet(X_{n,k}(\CC);\ZZ) \;\cong\; R_{n,k},
\]
as claimed.  The description of the isomorphism $K_0(X_{n,k})\cong R_{n,k}$ in terms of $K$-theoretic first Chern classes of the dual tautological line bundles follows directly from the presentation of $K_0((\PP^{k-1})^n)$ and the restriction to $X_{n,k}$.
\end{proof}

\begin{corollary}
\label{cor:grothendieck-additive-basis-Xnk}
The set $\{\mathfrak{G}_w \mid w\in \Fubini\}$ forms an additive $\ZZ$-basis of $K_0(X_{n,k})\cong R_{n,k}$. \hfill $\square$
\end{corollary}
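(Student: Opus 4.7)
The plan is to assemble three already-established results: the cellular freeness statement \Cref{thm:G0-free-abelian-when-cellular}, the identification of PR varieties with Grothendieck polynomials in \Cref{thm:grothendieck-poly-of-word}, and the main theorem \Cref{thm:main}. Since the substantive work has been done in proving these, the corollary reduces to bookkeeping about how bases behave under the chain of isomorphisms.

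First I would observe that $X_{n,k}$ is smooth (as an open subvariety of $\Pnk$) and cellular with cells $\{C_w : w \in \Fubini\}$ by \Cref{thm:Xnk-cell-decomposition}. Applying \Cref{thm:G0-free-abelian-when-cellular}, together with the identification $K_0(X_{n,k}) \cong G_0(X_{n,k})$ available for smooth varieties, produces a $\ZZ$-basis for $K_0(X_{n,k})$ consisting of the structure sheaf classes $\{[\cO_{\overline{C}_w}] : w \in \Fubini\}$, where the closure is taken inside $X_{n,k}$.

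Next I would trace these basis classes through the isomorphisms assembled in the proof of \Cref{thm:main}. The flat pullback $K_0(\Pnk) \twoheadrightarrow K_0(X_{n,k})$ induced by the open immersion $X_{n,k} \hookrightarrow \Pnk$ sends $[\cO_{X_w}]$ to $[\cO_{X_w \cap X_{n,k}}]$, and for Fubini $w$ this intersection is precisely the closure of the PR cell $C_w$ inside $X_{n,k}$. Composing with the presentation $K_0(\Pnk) \cong S_{n,k}$ from \Cref{eg:K0(Pn)} and invoking \Cref{thm:grothendieck-poly-of-word}, we see that the image of $[\cO_{\overline{C}_w}]$ in $R_{n,k} \cong S_{n,k}/(e_{n-k+1},\ldots,e_n)$ agrees with the image of the Grothendieck polynomial $\mathfrak{G}_w$.

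Putting these steps together transports the basis $\{[\cO_{\overline{C}_w}] : w \in \Fubini\}$ of $K_0(X_{n,k})$ to the set $\{\mathfrak{G}_w \bmod (e_{n-k+1},\ldots,e_n) : w \in \Fubini\}$ inside $R_{n,k}$, yielding the desired $\ZZ$-basis. I do not anticipate any real obstacle here; the one small point worth stating explicitly is that for Fubini $w$ the intersection $X_w \cap X_{n,k}$ coincides with the closure of $C_w$ in $X_{n,k}$, which is immediate from the open inclusion $X_{n,k} \subseteq \Pnk$ and the fact that $C_w \subseteq X_{n,k}$ for all $w \in \Fubini$ by \Cref{thm:Xnk-cell-decomposition}.
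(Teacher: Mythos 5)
Your proposal is correct and matches the paper's (implicit) argument: the corollary is left as immediate precisely because \Cref{thm:G0-free-abelian-when-cellular} gives the basis $\{[\cO_{\overline{C}_w}]\mid w\in\Fubini\}$ of $K_0(X_{n,k})$ and \Cref{thm:grothendieck-poly-of-word} combined with the quotient description in the proof of \Cref{thm:main} identifies these classes with the $\mathfrak{G}_w$. Your explicit remark that $X_w\cap X_{n,k}$ equals the closure of $C_w$ inside $X_{n,k}$ for Fubini $w$ is a worthwhile clarification the paper glosses over, but it does not change the argument.
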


\section{Pipe Dream Combinatorics of Words}
\label{sec:PD-for-words}

In this section, we extend the classical and bumpless pipe dream models from permutations to arbitrary words $w\in [k]^n$.  The resulting pipe dreams for words will realize the Schubert and Grothendieck polynomials $\mathfrak S_w$ and $\mathfrak G_w$ as monomial-weight generating functions, in complete analogy with the permutation case.  We adopt the same notation as in \Cref{subsec:classical-PD}.

Recall from \Cref{thm:schubert-for-word} and \Cref{thm:grothendieck-poly-of-word} that for a word $w$ we define
\[\begin{gathered}
\mathfrak{S}_w(x_1,\dots,x_n) := \sinv\cdot \mathfrak{S}_{\std(\conv(w))}(x_1,\dots,x_n)
\quad\text{and} \\ \quad
\mathfrak{G}_w(x_1,\dots,x_n) := \sinv\cdot \mathfrak{G}_{\std(\conv(w))}(x_1,\dots,x_n),
\end{gathered}
\]
where $\conv(w)$ is the convexification of $w$, $\std(\conv(w))$ is its standardization, and $\sinv\in S_n$ is the permutation associated to $w$ acting by permuting the variables.  Thus it is natural to construct pipe dreams and bumpless pipe dreams for $w$ by starting from those for the permutation $\std(\conv(w))$ and then truncating and relabeling.  We first record a “rectangularity” property of these permutation pipe dreams that justifies the truncation to the leftmost $k$ columns.

\begin{lemma}\label{lem:rectangular-PD}
    Let $P$ be a classical pipe dream for the permutation $\std(\conv(w))$. Then, all cross-tiles of $P$ are in the leftmost $k$ columns.
\end{lemma}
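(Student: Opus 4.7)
The plan is to reduce the claim to the top pipe dream $P_{\mathrm{top}}$ of the permutation $v := \std(\conv(w))$, and then exploit monotonicity of chute moves. Recall that, by Bergeron--Billey, every reduced pipe dream in $\operatorname{PD}(v)$ is obtained from $P_{\mathrm{top}}$ by a sequence of ordinary chute moves, and every element of $\overline{\operatorname{PD}}(v)$ by ordinary and $K$-theoretic chute moves. Since an (ordinary or $K$-theoretic) chute move on the rectangle $[r,r+1]\times[a+1,b]$ with corner cross at $(r,b+1)$ either moves that cross to $(r+1,a)$ or duplicates it there, and since $a\le b<b+1$, no chute move ever introduces a cross into a column strictly to the right of the columns already occupied by crosses. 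Thus the lemma reduces to showing that $P_{\mathrm{top}}$ itself has no crosses outside the first $k$ columns.

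For $P_{\mathrm{top}}$, I would use the description that column $i$ contains exactly $c(v^{-1})_i$ crosses, top-justified. It therefore suffices to prove $c(v^{-1})_i=0$ for every $i>k$. Let $m$ be the number of distinct letters appearing in $w$, so $v\in\mathcal S_{N}$ with $N=n+k-m$. From the explicit formula in \Cref{def:standardization}, the letters $k+1,k+2,\dots,k+(n-m)$ appear in $v$ precisely at the noninitial positions $i_1<i_2<\cdots<i_{n-m}$ of $\conv(w)$, in that order. In particular, $v^{-1}(k+r)=i_r$ for $1\le r\le n-m$, so $v^{-1}$ restricted to $\{k+1,k+2,\dots,N\}$ is strictly increasing.

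Consequently, for any $i>k$ we have $v^{-1}(i)<v^{-1}(j)$ for every $j>i$, which forces $c(v^{-1})_i=\#\{j>i:v^{-1}(j)<v^{-1}(i)\}=0$. This gives the desired vanishing in $P_{\mathrm{top}}$, and combined with the chute-move monotonicity above completes the proof for both reduced and nonreduced pipe dreams.

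The argument is essentially a direct consequence of the shape of the standardization map, so there is no real obstacle; the only subtlety to be careful about is checking that $K$-theoretic chute moves preserve the ``no crosses past column $k$'' condition, which is immediate once one notes that such moves only add crosses at columns $\le b<b+1\le k$.
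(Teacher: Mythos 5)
Your proof is correct and follows essentially the same route as the paper: establish the claim for $P_{\mathrm{top}}$ and then invoke the leftward monotonicity of ordinary and $K$-theoretic chute moves. Your treatment of the $P_{\mathrm{top}}$ step—showing $c(v^{-1})_i=0$ for $i>k$ directly from the fact that $\std$ assigns the values $k+1,\dots,k+(n-m)$ to the noninitial positions in increasing order—is a cleaner rendering of the paper's argument that redundant letters cannot be the smaller value of an inversion.
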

\begin{proof}
 We first prove the statement for the top pipe dream $P_{\textrm{top}}\in \operatorname{PD}(\std(\conv(w)))$. Recall that $P_{\textrm{top}}$ is the unique pipe dream obtained by placing $c(\std(\conv(w))^{-1})_i$-many top-justified \crosstile-tiles at column $i$ of the grid. The Lehmer code $c(\std(\conv(w))^{-1})$ is determined by the position $j$ for each $(i,j)\in \operatorname{Inv}(\std(\conv(w))^{-1})$, which is in bijection with $(w_k, w_h)$ for $(h,k)\in \operatorname{Inv}(\std(\conv(w)))$. Therefore, the rightmost column that contains a \crosstile-tile in $P_{\textrm{top}}$ is the largest \textit{letter} $w_h$ in all inversions $(h,k)\in \operatorname{Inv}(\std(\conv(w)))$.
 
 The permutation $\std(\conv(w))$ consists of three kinds of letters -- initial letters, redundant letters, and missing letters. Let $m$ be the largest initial letter. We show that $m$ is the largest letter appearing as the second letter in any inversion of $\std(\conv(w))$. Redundant letters are by definition larger than $m$, so they cannot appear as the second letter in any inversion. Missing letters are by definition less than $m$, so they cannot be largest among second letters in inversions. Therefore, the above claim is true, and all \crosstile-tiles of $P_{\textrm{top}}$ must appear within the leftmost $k$ columns. 

 To see that this is true for \textit{every} pipe dream $P$ of $\std(\conv(w))$, recall that $P$ can be obtained from $P_{\textrm{top}}$ by a sequence of ($K$-theoretic) chute moves, and chute moves shift a \crosstile-tile always to the \textit{left}. 
\end{proof}

\begin{lemma}\label{lem:rectangular-BPD}
    Let $B$ be a bumpless pipe dream for the permutation $\std(\conv(w))$. Then, all blank tiles of $B$ are in the leftmost $k$ columns.
\end{lemma}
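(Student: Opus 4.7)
The plan is to mirror the classical pipe dream argument of \Cref{lem:rectangular-PD}: first establish the statement for the diagram BPD $B_{\textrm{diagram}}$, and then propagate it to all of $\overline{\operatorname{BPD}}(v)$ (where $v := \std(\conv(w))$) along the (K-theoretic) droop moves that generate this set from $B_{\textrm{diagram}}$.

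For the base case, the blanks of $B_{\textrm{diagram}}$ are precisely the cells of the Rothe diagram
\[
D(v) = \{(i,j) : v(i) > j \text{ and } v^{-1}(j) > i\}.
\]
From \Cref{def:standardization}, the values $v(i)$ exceeding $k$ are attained only at the noninitial positions $i_r \in [n]$, with $v(i_r) = k + r$, so $v^{-1}(k+r) = i_r$. If $(i, k+r) \in D(v)$ then $i < i_r$; but for any such $i$---whether initial, equal to $i_s$ with $s < r$, or lying beyond $n$---one checks $v(i) \leq k + r - 1 < k + r$, contradicting $v(i) > k + r$. Hence $D(v) \subseteq [n+k-m] \times [k]$, which proves the lemma for $B_{\textrm{diagram}}$.

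For the inductive step, any droop move acts inside a rectangle $[i, i+a] \times [j, j+b]$ and only alters tiles in that rectangle. An ordinary droop move requires blanks at both corners $(i, j)$ and $(i+a, j+b)$, so by the induction hypothesis both blank corners already lie in the first $k$ columns; thus $j + b \leq k$, the rectangle is contained in the first $k$ columns, and any blank created by the move is again in the first $k$ columns.

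The main obstacle is the $K$-theoretic case, in which the SE corner may carry an SE-elbow rather than a blank. To handle it, I would exploit that for $v = \std(\conv(w))$ the pipes exiting in columns beyond $k$ are precisely the redundant-position pipes $i_1 < i_2 < \cdots$ with $v(i_s) = k + s$, so the assignment $s \mapsto (i_s, k+s)$ is increasing in both coordinates. These pipes pairwise form no inversions, so no two pipes with $v(p) > k$ cross each other, and in particular no $\bpdplus$ tile can appear in a column strictly greater than $k$ in any BPD of $v$. A $K$-theoretic droop requires a crossing pattern supported by the two participating pipes inside the rectangle, and this pattern cannot fit inside a rectangle whose SE corner sits in a column $> k$ while the NW corner sits in a column $\leq k$, since the required crosses would have to appear in columns $> k$. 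Once rectangular containment is secured in both cases, induction on the number of (K-theoretic) droop moves completes the proof, in direct parallel to the chute-move step of \Cref{lem:rectangular-PD}.
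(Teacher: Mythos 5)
Your overall strategy --- establish the claim for $B_{\textrm{diagram}}$ and propagate it along (K-theoretic) droop moves --- is the same as the paper's, and your base case (identifying the blanks of $B_{\textrm{diagram}}$ with the Rothe diagram of $v=\std(\conv(w))$ and checking that no cell of $D(v)$ lies in a column beyond $k$) is correct and in fact more detailed than what the paper records. The ordinary-droop step is also fine: the SE corner of the droop rectangle is blank beforehand, hence lies in a column $\le k$ by induction, so every cell that becomes newly blank lies in a column $\le k$.

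The K-theoretic step, however, rests on a false claim. You assert that no cross tile can appear in a column strictly greater than $k$ in any BPD of $v$, deducing this from the fact that the pipes exiting beyond column $k$ pairwise form no inversions. The deduction is invalid: a cross in a column $j>k$ involves one pipe passing vertically and one passing horizontally through that cell, and only one of them need be a pipe exiting beyond column $k$; the other can be any pipe on its way to a smaller exit column. The paper's own \Cref{eg:BPD-for-24153} refutes the claim outright: for $w=21231$ one has $k=3$ and $\std(\conv(w))=24153$, yet every displayed BPD of $24153$ --- including $B_{\textrm{diagram}}$ --- has cross tiles in columns $4$ and $5$ (for instance at $(3,4)$, $(5,4)$, $(5,5)$). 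Consequently your argument does not rule out a K-theoretic droop whose NW corner sits in a column beyond $k$, which is exactly the configuration that would create a forbidden blank, so the inductive step is not closed in this case. (For what it is worth, the paper itself dispatches this case with the one-line assertion that droop moves only shift blanks to the left, which is immediate only for ordinary droops; your instinct that the K-theoretic case is the delicate point is sound, but the fix you propose does not work. A correct treatment must exploit the requirement that the two pipes involved in a K-theoretic droop already cross, or some other constraint forcing the NW corner of the droop rectangle into a column $\le k$.)
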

\begin{proof}
Similar to the proof of \Cref{lem:rectangular-PD}, every blank tile in the diagram BPD $B_{\textrm{diagram}}$ is contained in the leftmost $k$ columns. Recall that every BPD can be obtained from $B_{\textrm{diagram}}$ via a sequence of ($K$-theoretic) droop moves, and droop moves only shift a blank tile to the left. Therefore, all blank tiles of every BPD of $\std(\conv(w))$ are forced to be in the leftmost $k$ columns.
\end{proof}
As a result of the above lemmas, we may ignore the rightmost $(n-k)$ columns in both cases without causing ambiguity. It is therefore natural for us to define regular and bumpless pipe dreams for words by truncating the diagrams for $u:=\std(\conv(w))$ to the first $k$ columns and then relabeling rows by $\sinv$. Our definition of bumpless pipe dreams of words is compatible with the definition of the diagram of a word by Billey-Ryan \cite{BilleyRyan2024}.

\begin{definition}\label{def:PD-for-word}
    We define a \textit{classical pipe dream} $P$ for the word $w\in\Words$ to be the diagram consisting of the first $k$ columns of a classical pipe dream of the permutation $\std(\conv(w))$, with row labels given by the values of $\sinv$. We say the pipe dream $P$ is reduced if it comes from a reduced pipe dream of  $\std(\conv(w))$.
\end{definition}

\begin{definition}
\label{def:BPD-for-word}    We define a \textit{bumpless pipe dream} $B$ for the word $w$ to be the $n\times k$ grid consisting of the first $k$ columns of a bumpless pipe dream of the permutation $\std(\conv(w))$, with row labels given by the values of $\sinv$. We say the bumpless pipe dream $B$ is reduced if it comes from a reduced BPD of  $\std(\conv(w))$.
\end{definition}

For a classical pipe dream $P$ (or bumpless pipe dream $B$) for a word $w$, we define its monomial weight using the same formulas as in \Cref{subsec:classical-PD}, applied to the truncated diagram with relabeled rows.  In particular, the Schubert weight of $P$ is
\[
x^P := \prod_{(i,j)\in P} x_i,
\]
and similarly we define $(x-y)^P$ and the $K$-theoretic weights $\operatorname{wt}_K(B;x)$ and $\operatorname{wt}_K(B;x,y)$ by the same recipes as in the permutation case.

With these conventions, the local moves introduced earlier descend naturally.  Any chute move or $K$-theoretic chute move on a pipe dream for $u$ that is supported entirely in the first $k$ columns induces a well-defined move on the corresponding pipe dream for $w$, and similarly droop moves and $K$-theoretic droop moves for bumpless pipe dreams restrict to moves on word BPDs.  These induced moves do not introduce crosses or blanks in the truncated columns, so the combinatorics of chute and droop moves remains entirely valid for word pipe dreams associated to $w$.

\begin{proposition}
For a word $w\in [k]^n$ and $u := \std(\conv(w))$, every reduced pipe dream (resp.\ bumpless pipe dream) for $w$ can be obtained from the top (resp.\ diagram) pipe dream for $u$ by a sequence of chute moves (resp.\ droop moves) supported in the first $k$ columns, followed by truncation and relabeling.  In particular, the induced chute and droop moves act transitively on the reduced pipe dreams and reduced BPDs of $w$.
\end{proposition}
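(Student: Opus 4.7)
The plan is to transfer the known transitivity of chute and droop moves at the permutation level $u := \std(\conv(w))$ to the word level via the truncation-and-relabeling procedure of \Cref{def:PD-for-word} and \Cref{def:BPD-for-word}. The key input is the rectangularity statements of \Cref{lem:rectangular-PD} and \Cref{lem:rectangular-BPD}: every cross tile of a reduced pipe dream for $u$ and every blank tile of a reduced BPD for $u$ already lies in the leftmost $k$ columns. Consequently truncation does not destroy any essential combinatorial data, and the chute- or droop-move relation on the $u$-side will restrict cleanly to a relation on the $w$-side.

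First I would handle the classical case. Fix a reduced pipe dream $P'$ for $w$, which by definition lifts to some $P \in \operatorname{PD}(u)$. By the Bergeron--Billey theorem recalled in \Cref{subsec:classical-PD}, there is a sequence of chute moves connecting $P_{\text{top}}$ to $P$. I claim each chute move in this sequence is automatically supported in the first $k$ columns: a chute move of the form $P \mapsto (P - \{(r, j+1)\}) \cup \{(r+1, i)\}$ requires the NE corner $(r, j+1)$ to be a cross of the current pipe dream, so \Cref{lem:rectangular-PD} forces $j+1 \le k$, and the new cross at $(r+1, i)$ lies in column $i \le j < k$. Truncating each intermediate pipe dream to its first $k$ columns and relabeling its rows by $\sinv$ then yields the desired sequence of chute moves between the corresponding word pipe dreams. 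For transitivity, given any two reduced pipe dreams $P_1', P_2'$ for $w$, one lifts them to $P_1, P_2 \in \operatorname{PD}(u)$, connects each to $P_{\text{top}}$ via the Bergeron--Billey result, and concatenates the resulting descended chains.

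The bumpless case follows by the same template, with chute moves replaced by droop moves and \Cref{lem:rectangular-PD} replaced by \Cref{lem:rectangular-BPD}. A droop move on a BPD is determined by two blank tiles $(i, j)$ and $(i+a, j+b)$ in the current diagram, together with the ``$\lceil$-shape'' of pipes between them that is replaced by a ``$\rfloor$-shape''. By \Cref{lem:rectangular-BPD} applied to both the input and output diagrams, both blank tiles lie in the leftmost $k$ columns, so $j+b \le k$ and the entire droop configuration is supported in the first $k$ columns. Truncation and relabeling then descend Lam--Lee--Shimozono's transitivity from $\operatorname{BPD}(u)$ to the set of reduced BPDs for $w$, starting from the truncation of $B_{\text{diagram}}$.

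The main technical obstacle I expect is verifying carefully that (i) a chute or droop move supported in the first $k$ columns descends under truncation to an admissible local move on word pipe dreams in the sense of \Cref{def:PD-for-word} and \Cref{def:BPD-for-word}, and (ii) this descent is compatible with the row relabeling by $\sinv$. Both items should reduce to the observation that the combinatorial conditions defining chute and droop moves are purely local and refer only to tiles inside the relevant support, so truncation preserves them, while relabeling rows by a single fixed permutation leaves individual tile contents unchanged. Granting these compatibilities, transitivity on the word side is an immediate consequence of transitivity on the permutation side.
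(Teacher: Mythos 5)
Your proposal is correct and follows essentially the same route as the paper, which simply cites the proofs of \Cref{lem:rectangular-PD,lem:rectangular-BPD}: those proofs already establish that chute and droop moves only shift crosses (resp.\ blanks) leftward, so every move in a Bergeron--Billey or Lam--Lee--Shimozono connecting sequence for $u=\std(\conv(w))$ is automatically supported in the first $k$ columns and descends under truncation and relabeling. Your write-up merely makes explicit the locality and relabeling compatibilities that the paper leaves implicit.
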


\begin{proof}
Follows from the proofs of \Cref{lem:rectangular-PD,lem:rectangular-BPD}.
\end{proof}

\begin{example}[Classical and bumpless pipe dreams for $w=21231$]
\label{eg:BPD-word-small}
Take $w=21231$ with $n=5$ and $k=3$. Then $\std(\conv(w))=u=24153$ and $\sinv=13254$. Below are all reduced pipe dreams of $w$. 

\begin{footnotesize}
    \[\tikzpicture
  \PipeStart{5}
  \PipePlace{1}{1}{\pipeCross}
  \PipePlace{1}{2}{\pipeBump}
  \PipePlace{1}{3}{\pipeCross}
  \PipePlace{2}{1}{\pipeCross}
  \PipePlace{2}{2}{\pipeBump}
  \PipePlace{2}{3}{\pipeCross}
  \PipePlace{3}{1}{\pipeBump}
  \PipePlace{3}{2}{\pipeBump}
  \PipePlace{3}{3}{\pipeElbow}
  \PipePlace{4}{1}{\pipeBump}
  \PipePlace{4}{2}{\pipeElbow}
  \PipePlace{5}{1}{\pipeElbow}
  \PipeEnd
  \begin{scope}[x=2em,y=2em, yscale=-1, shift={(-6,5)}]
    \node[right] at (5,0.5) {\small $x_1$};
    \node[right] at (5,1.5) {\small $x_3$};
    \node[right] at (5,2.5) {\small $x_2$};
    \node[right] at (5,3.5) {\small $x_5$};
    \node[right] at (5,4.5) {\small $x_4$};
  \end{scope}
\endtikzpicture \quad
\tikzpicture
  \PipeStart{5}
  \PipePlace{1}{1}{\pipeCross}
  \PipePlace{1}{2}{\pipeBump}
  \PipePlace{1}{3}{\pipeCross}
  \PipePlace{2}{1}{\pipeCross}
  \PipePlace{2}{2}{\pipeBump}
  \PipePlace{2}{3}{\pipeBump}
  \PipePlace{3}{1}{\pipeBump}
  \PipePlace{3}{2}{\pipeCross}
  \PipePlace{3}{3}{\pipeElbow}
  \PipePlace{4}{1}{\pipeBump}
  \PipePlace{4}{2}{\pipeElbow}
  \PipePlace{5}{1}{\pipeElbow}
  \PipeEnd
  \begin{scope}[x=2em,y=2em, yscale=-1, shift={(-6,5)}]
    \node[right] at (5,0.5) {\small $x_1$};
    \node[right] at (5,1.5) {\small $x_3$};
    \node[right] at (5,2.5) {\small $x_2$};
    \node[right] at (5,3.5) {\small $x_5$};
    \node[right] at (5,4.5) {\small $x_4$};
  \end{scope}
\endtikzpicture
\quad
\tikzpicture
  \PipeStart{5}
  \PipePlace{1}{1}{\pipeCross}
  \PipePlace{1}{2}{\pipeBump}
  \PipePlace{1}{3}{\pipeBump}
  \PipePlace{2}{1}{\pipeCross}
  \PipePlace{2}{2}{\pipeCross}
  \PipePlace{2}{3}{\pipeBump}
  \PipePlace{3}{1}{\pipeBump}
  \PipePlace{3}{2}{\pipeCross}
  \PipePlace{3}{3}{\pipeElbow}
  \PipePlace{4}{1}{\pipeBump}
  \PipePlace{4}{2}{\pipeElbow}
  \PipePlace{5}{1}{\pipeElbow}
  \PipeEnd
  \begin{scope}[x=2em,y=2em, yscale=-1, shift={(-6,5)}]
    \node[right] at (5,0.5) {\small $x_1$};
    \node[right] at (5,1.5) {\small $x_3$};
    \node[right] at (5,2.5) {\small $x_2$};
    \node[right] at (5,3.5) {\small $x_5$};
    \node[right] at (5,4.5) {\small $x_4$};
  \end{scope}
\endtikzpicture
\quad 
\tikzpicture
  \PipeStart{5}
  \PipePlace{1}{1}{\pipeCross}
  \PipePlace{1}{2}{\pipeBump}
  \PipePlace{1}{3}{\pipeCross}
  \PipePlace{2}{1}{\pipeCross}
  \PipePlace{2}{2}{\pipeBump}
  \PipePlace{2}{3}{\pipeBump}
  \PipePlace{3}{1}{\pipeBump}
  \PipePlace{3}{2}{\pipeBump}
  \PipePlace{3}{3}{\pipeElbow}
  \PipePlace{4}{1}{\pipeCross}
  \PipePlace{4}{2}{\pipeElbow}
  \PipePlace{5}{1}{\pipeElbow}
  \PipeEnd
  \begin{scope}[x=2em,y=2em, yscale=-1, shift={(-6,5)}]
    \node[right] at (5,0.5) {\small $x_1$};
    \node[right] at (5,1.5) {\small $x_3$};
    \node[right] at (5,2.5) {\small $x_2$};
    \node[right] at (5,3.5) {\small $x_5$};
    \node[right] at (5,4.5) {\small $x_4$};
  \end{scope}
\endtikzpicture
\quad 
\tikzpicture
  \PipeStart{5}
  \PipePlace{1}{1}{\pipeCross}
  \PipePlace{1}{2}{\pipeBump}
  \PipePlace{1}{3}{\pipeBump}
  \PipePlace{2}{1}{\pipeCross}
  \PipePlace{2}{2}{\pipeCross}
  \PipePlace{2}{3}{\pipeBump}
  \PipePlace{3}{1}{\pipeBump}
  \PipePlace{3}{2}{\pipeBump}
  \PipePlace{3}{3}{\pipeElbow}
  \PipePlace{4}{1}{\pipeCross}
  \PipePlace{4}{2}{\pipeElbow}
  \PipePlace{5}{1}{\pipeElbow}
  \PipeEnd
  \begin{scope}[x=2em,y=2em, yscale=-1, shift={(-6,5)}]
    \node[right] at (5,0.5) {\small $x_1$};
    \node[right] at (5,1.5) {\small $x_3$};
    \node[right] at (5,2.5) {\small $x_2$};
    \node[right] at (5,3.5) {\small $x_5$};
    \node[right] at (5,4.5) {\small $x_4$};
  \end{scope}
\endtikzpicture
\]
\end{footnotesize}

All reduced BPDs for $w$ are as follows. 

\begin{footnotesize}
\[
\tikzpicture
  \BPDStart{5}{3}
  \BPDPlace{1}{1}{\blank}
  \BPDPlace{1}{2}{\SE}
  \BPDPlace{1}{3}{\hor}
  \BPDPlace{2}{1}{\blank}
  \BPDPlace{2}{2}{\ver}
  \BPDPlace{2}{3}{\blank}
  \BPDPlace{3}{1}{\SE}
  \BPDPlace{3}{2}{\+}
  \BPDPlace{3}{3}{\hor}
  \BPDPlace{4}{1}{\ver}
  \BPDPlace{4}{2}{\ver}
  \BPDPlace{4}{3}{\blank}
  \BPDPlace{5}{1}{\ver}
  \BPDPlace{5}{2}{\ver}
  \BPDPlace{5}{3}{\SE}
  \BPDEnd
  \begin{scope}[x=2em,y=2em, yscale=-1, shift={(-6,5)}]
    \node[right] at (5,0.5) {\small $x_1$};
    \node[right] at (5,1.5) {\small $x_3$};
    \node[right] at (5,2.5) {\small $x_2$};
    \node[right] at (5,3.5) {\small $x_5$};
    \node[right] at (5,4.5) {\small $x_4$};
  \end{scope}
\endtikzpicture
\quad 
\tikzpicture
  \BPDStart{5}{5}
  \BPDPlace{1}{1}{\blank}
  \BPDPlace{1}{2}{\blank}
  \BPDPlace{1}{3}{\SE}
  \BPDPlace{2}{1}{\blank}
  \BPDPlace{2}{2}{\SE}
  \BPDPlace{2}{3}{\NW}
  \BPDPlace{3}{1}{\SE}
  \BPDPlace{3}{2}{\+}
  \BPDPlace{3}{3}{\hor}
  \BPDPlace{4}{1}{\ver}
  \BPDPlace{4}{2}{\ver}
  \BPDPlace{4}{3}{\blank}
  \BPDPlace{5}{1}{\ver}
  \BPDPlace{5}{2}{\ver}
  \BPDPlace{5}{3}{\SE}
  \BPDEnd
 \begin{scope}[x=2em,y=2em, yscale=-1, shift={(-6,5)}]
    \node[right] at (5,0.5) {\small $x_1$};
    \node[right] at (5,1.5) {\small $x_3$};
    \node[right] at (5,2.5) {\small $x_2$};
    \node[right] at (5,3.5) {\small $x_5$};
    \node[right] at (5,4.5) {\small $x_4$};
  \end{scope}
\endtikzpicture \quad
\tikzpicture
  \BPDStart{5}{5}
  \BPDPlace{1}{1}{\blank}
  \BPDPlace{1}{2}{\blank}
  \BPDPlace{1}{3}{\SE}
  \BPDPlace{2}{1}{\blank}
  \BPDPlace{2}{2}{\blank}
  \BPDPlace{2}{3}{\ver}
  \BPDPlace{3}{1}{\SE}
  \BPDPlace{3}{2}{\hor}
  \BPDPlace{3}{3}{\+}
  \BPDPlace{4}{1}{\ver}
  \BPDPlace{4}{2}{\SE}
  \BPDPlace{4}{3}{\NW}
  \BPDPlace{5}{1}{\ver}
  \BPDPlace{5}{2}{\ver}
  \BPDPlace{5}{3}{\SE}
  \BPDEnd
   \begin{scope}[x=2em,y=2em, yscale=-1, shift={(-6,5)}]
    \node[right] at (5,0.5) {\small $x_1$};
    \node[right] at (5,1.5) {\small $x_3$};
    \node[right] at (5,2.5) {\small $x_2$};
    \node[right] at (5,3.5) {\small $x_5$};
    \node[right] at (5,4.5) {\small $x_4$};
  \end{scope}
\endtikzpicture \quad
\tikzpicture
  \BPDStart{5}{5}
  \BPDPlace{1}{1}{\blank}
  \BPDPlace{1}{2}{\SE}
  \BPDPlace{1}{3}{\hor}
  \BPDPlace{2}{1}{\blank}
  \BPDPlace{2}{2}{\ver}
  \BPDPlace{2}{3}{\blank}
  \BPDPlace{3}{1}{\blank}
  \BPDPlace{3}{2}{\ver}
  \BPDPlace{3}{3}{\SE}
  \BPDPlace{4}{1}{\SE}
  \BPDPlace{4}{2}{\+}
  \BPDPlace{4}{3}{\NW}
  \BPDPlace{5}{1}{\ver}
  \BPDPlace{5}{2}{\ver}
  \BPDPlace{5}{3}{\SE}
  \BPDEnd
   \begin{scope}[x=2em,y=2em, yscale=-1, shift={(-6,5)}]
    \node[right] at (5,0.5) {\small $x_1$};
    \node[right] at (5,1.5) {\small $x_3$};
    \node[right] at (5,2.5) {\small $x_2$};
    \node[right] at (5,3.5) {\small $x_5$};
    \node[right] at (5,4.5) {\small $x_4$};
  \end{scope}
\endtikzpicture \quad
\tikzpicture
  \BPDStart{5}{5}
  \BPDPlace{1}{1}{\blank}
  \BPDPlace{1}{2}{\blank}
  \BPDPlace{1}{3}{\SE}
  \BPDPlace{2}{1}{\blank}
  \BPDPlace{2}{2}{\blank}
  \BPDPlace{2}{3}{\ver}
  \BPDPlace{3}{1}{\SE}
  \BPDPlace{3}{2}{\hor}
  \BPDPlace{3}{3}{\+}
  \BPDPlace{4}{1}{\ver}
  \BPDPlace{4}{2}{\SE}
  \BPDPlace{4}{3}{\NW}
  \BPDPlace{5}{1}{\ver}
  \BPDPlace{5}{2}{\ver}
  \BPDPlace{5}{3}{\SE}
  \BPDEnd
  \begin{scope}[x=2em,y=2em, yscale=-1, shift={(-6,5)}]
    \node[right] at (5,0.5) {\small $x_1$};
    \node[right] at (5,1.5) {\small $x_3$};
    \node[right] at (5,2.5) {\small $x_2$};
    \node[right] at (5,3.5) {\small $x_5$};
    \node[right] at (5,4.5) {\small $x_4$};
  \end{scope}
\endtikzpicture
\]
\end{footnotesize}

The monomial weights of the reduced classical pipe dreams above are exactly the monomials appearing in the Schubert polynomial
\[
\mathfrak{S}_w = x_1^2x_2x_3+ x_1^2x_3^2+ x_1x_2x_3^2+x_1^2x_3x_5+x_1x_3^2x_5,
\]
in accordance with \Cref{thm:word-schubert-pd}. In this example there are five reduced classical pipe dreams and five reduced bumpless pipe dreams for $w$, reflecting the five reduced pipe dreams and BPDs for the permutation $u=\std(\conv(w))$ from \Cref{eg:PD-for-24153} and \Cref{eg:BPD-for-24153}.

Next, we display some of the many non-reduced pipe dreams and BPDs for $w$, which contribute the higher-order terms in the corresponding Grothendieck polynomial. Their weights account for the higher-order terms in 
$$\begin{gathered}
    \mathfrak{G}_w = 2 x_1^2 x_2 x_3^2 x_5 - 2 x_1^2 x_2 x_3^2 - 2 x_1^2 x_3^2 x_5 - x_1^2 x_2 x_3 x_5 - x_1 x_2 x_3^2 x_5 \\
    + x_1^2 x_2 x_3 + x_1^2 x_3^2 + x_1 x_2 x_3^2 + x_1^2 x_3 x_5 + x_1 x_3^2 x_5.
\end{gathered}$$

\begin{footnotesize}
    \[\tikzpicture
  \PipeStart{5}
  \PipePlace{1}{1}{\pipeCross}
  \PipePlace{1}{2}{\pipeBump}
  \PipePlace{1}{3}{\pipeCross}
  \PipePlace{2}{1}{\pipeCross}
  \PipePlace{2}{2}{\pipeBump}
  \PipePlace{2}{3}{\pipeCross}
  \PipePlace{3}{1}{\pipeBump}
  \PipePlace{3}{2}{\pipeCross}
  \PipePlace{3}{3}{\pipeElbow}
  \PipePlace{4}{1}{\pipeBump}
  \PipePlace{4}{2}{\pipeElbow}
  \PipePlace{5}{1}{\pipeElbow}
  \PipeEnd
  \begin{scope}[x=2em,y=2em, yscale=-1, shift={(-6,5)}]
    \node[right] at (5,0.5) {\small $x_1$};
    \node[right] at (5,1.5) {\small $x_3$};
    \node[right] at (5,2.5) {\small $x_2$};
    \node[right] at (5,3.5) {\small $x_5$};
    \node[right] at (5,4.5) {\small $x_4$};
  \end{scope}
\endtikzpicture \qquad \tikzpicture
  \PipeStart{5}
  \PipePlace{1}{1}{\pipeCross}
  \PipePlace{1}{2}{\pipeBump}
  \PipePlace{1}{3}{\pipeCross}
  \PipePlace{2}{1}{\pipeCross}
  \PipePlace{2}{2}{\pipeBump}
  \PipePlace{2}{3}{\pipeCross}
  \PipePlace{3}{1}{\pipeBump}
  \PipePlace{3}{2}{\pipeCross}
  \PipePlace{3}{3}{\pipeElbow}
  \PipePlace{4}{1}{\pipeCross}
  \PipePlace{4}{2}{\pipeElbow}
  \PipePlace{5}{1}{\pipeElbow}
  \PipeEnd
  \begin{scope}[x=2em,y=2em, yscale=-1, shift={(-6,5)}]
    \node[right] at (5,0.5) {\small $x_1$};
    \node[right] at (5,1.5) {\small $x_3$};
    \node[right] at (5,2.5) {\small $x_2$};
    \node[right] at (5,3.5) {\small $x_5$};
    \node[right] at (5,4.5) {\small $x_4$};
  \end{scope}
\endtikzpicture \qquad \tikzpicture
  \BPDStart{5}{3}
  \BPDPlace{1}{1}{\blank}
  \BPDPlace{1}{2}{\blank}
  \BPDPlace{1}{3}{\SE}
  \BPDPlace{2}{1}{\blank}
  \BPDPlace{2}{2}{\blank}
  \BPDPlace{2}{3}{\ver}
  \BPDPlace{3}{1}{\blank}
  \BPDPlace{3}{2}{\SE}
  \BPDPlace{3}{3}{\+}
  \BPDPlace{4}{1}{\SE}
  \BPDPlace{4}{2}{\+}
  \BPDPlace{4}{3}{\NW}
  \BPDPlace{5}{1}{\ver}
  \BPDPlace{5}{2}{\ver}
  \BPDPlace{5}{3}{\SE}
  \BPDEnd
  \begin{scope}[x=2em,y=2em, yscale=-1, shift={(-6,5)}]
    \node[right] at (5,0.5) {\small $x_1$};
    \node[right] at (5,1.5) {\small $x_3$};
    \node[right] at (5,2.5) {\small $x_2$};
    \node[right] at (5,3.5) {\small $x_5$};
    \node[right] at (5,4.5) {\small $x_4$};
  \end{scope}
  \endtikzpicture \qquad \raisebox{5em}{$\cdots$}
\]
\end{footnotesize}
\end{example}

An immediate consequence of Definitions~\ref{def:PD-for-word} and~\ref{def:BPD-for-word}, together with the permutation case, is that Schubert and Grothendieck polynomials of a word are again monomial-weight generating functions for (classical or bumpless) pipe dreams.  In the following statements we use the term ``pipe dream'' to mean either a classical pipe dream or a bumpless pipe dream. The proofs apply verbatim in both settings.

\begin{theorem}
\label{thm:word-schubert-pd}
For any word $w\in [k]^n$, the multiset of monomials appearing in the Schubert polynomial $\mathfrak{S}_w(x_1,\dots,x_n)$ coincides with the multiset of monomial weights of reduced pipe dreams for $w$ defined as above.
\end{theorem}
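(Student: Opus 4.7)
The plan is to reduce the statement directly to the classical permutation formulas \Cref{thm:pipe dreams} (for classical PDs) and \Cref{thm:bumpless-schubert} (for BPDs), by combining the defining relation $\mathfrak{S}_w := \sigma^{-1}(w) \cdot \mathfrak{S}_u$ from \Cref{def:schubert-poly-of-word}, where $u := \std(\conv(w))$, with the rectangularity lemmas \Cref{lem:rectangular-PD} and \Cref{lem:rectangular-BPD}.

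First I would produce a bijection between the set of reduced pipe dreams for the permutation $u$ and the set of reduced pipe dreams for the word $w$ from \Cref{def:PD-for-word}. By \Cref{lem:rectangular-PD}, every reduced $P \in \operatorname{PD}(u)$ has all of its crosses in the first $k$ columns, so truncating $P$ to those columns loses no information: the tiles in the omitted columns are forced to be bumps or elbows by the permutation data and can be uniquely reconstructed. Hence truncation-plus-row-relabeling is a bijection from $\operatorname{PD}(u)$ onto the reduced word pipe dreams of $w$. The same argument using \Cref{lem:rectangular-BPD} and blank tiles handles the BPD case.

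Next I would compare monomial weights across this bijection. For $P \in \operatorname{PD}(u)$ the classical weight is $x^P = \prod_{(i,j) \in P} x_i$. After truncation and the row relabeling by $\sigma^{-1}(w)$ prescribed in \Cref{def:PD-for-word}, row $i$ now carries the variable $x_{\sigma^{-1}(w)(i)}$, so the weight of the associated word pipe dream $Q$ is $x^Q = \prod_{(i,j) \in P} x_{\sigma^{-1}(w)(i)}$. Under the standard convention $\tau \cdot x_i = x_{\tau(i)}$ this is exactly $\sigma^{-1}(w) \cdot x^P$. Summing over all reduced PDs and invoking \Cref{thm:pipe dreams} yields
\[
\sum_{Q} x^Q \;=\; \sigma^{-1}(w) \cdot \sum_{P \in \operatorname{PD}(u)} x^P \;=\; \sigma^{-1}(w) \cdot \mathfrak{S}_u \;=\; \mathfrak{S}_w,
\]
where $Q$ ranges over reduced word pipe dreams for $w$. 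Because $\sigma^{-1}(w)$ acts as a bijective permutation of the variables, it preserves the multiset of monomials with multiplicities, so the displayed equality holds at the level of multisets, not merely as polynomials. The identical computation with \Cref{thm:bumpless-schubert} and the BPD blank weight $x^B = \prod_{(i,j) \in \mathrm{blank}(B)} x_i$ covers the bumpless case.

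The only subtle point is convention-matching: one must verify that the permutation $\sigma^{-1}(w)$ appearing in the row relabeling of \Cref{def:PD-for-word} is the \emph{same} permutation acting in the \emph{same} direction as in \Cref{def:schubert-poly-of-word}. Once the convention $\tau \cdot x_i = x_{\tau(i)}$ is fixed, both occurrences produce the substitution $x_i \mapsto x_{\sigma^{-1}(w)(i)}$, and the rest of the proof is a direct unwinding. No genuine combinatorial obstacle arises beyond the rectangularity lemmas and the classical permutation theorems, both of which are already in hand.
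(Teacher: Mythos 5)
Your proposal is correct and follows essentially the same route as the paper: a weight-preserving bijection between $\operatorname{PD}(\std(\conv(w)))$ and the reduced word pipe dreams of $w$ via truncation (justified by \Cref{lem:rectangular-PD}) and row relabeling by $\sinv$, combined with the classical generating-function formula and the defining relation $\mathfrak{S}_w = \sinv\cdot\mathfrak{S}_{\std(\conv(w))}$. Your extra care about the variable-relabeling convention and the reconstructibility of the truncated columns is a welcome elaboration of steps the paper leaves implicit, but it is the same argument.
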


\begin{proof}
Let $u := \std(\conv(w))$.  By \Cref{thm:pipe dreams}, $\mathfrak{S}_u(x_1,\dots,x_n)$ is the weight-generating function for reduced pipe dreams in $\operatorname{PD}(u)$.  By \Cref{lem:rectangular-PD} and \Cref{def:PD-for-word}, restriction to the first $k$ columns and relabeling rows by $\sinv$ induces a weight-preserving bijection between reduced pipe dreams for $u$ and reduced pipe dreams for $w$.

On the polynomial side, the definition of $\mathfrak{S}_w$ is $\mathfrak{S}_w = \sinv\cdot \mathfrak{S}_u$, i.e., $\mathfrak{S}_w$ is obtained from $\mathfrak{S}_u$ by the same permutation of variables.  Hence the multiset of monomials appearing in $\mathfrak{S}_w$ is exactly the multiset of weights of reduced pipe dreams for $w$.
\end{proof}

\begin{theorem}
\label{thm:word-grothendieck-bpd}
For any word $w\in [k]^n$, the multiset of monomials appearing in the Grothendieck polynomial $\mathfrak{G}_w(x_1,\dots,x_n)$ coincides with the multiset of $K$-theoretic monomial weights of non-reduced pipe dreams for $w$ defined as above.
\end{theorem}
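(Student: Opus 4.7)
The plan is to deduce \Cref{thm:word-grothendieck-bpd} from the permutation-level Grothendieck pipe-dream expansions (\Cref{thm:pipe dreams,thm:BPD-grothendieck}) by the same truncation-plus-relabeling strategy used in the reduced case (\Cref{thm:word-schubert-pd}). Writing $u := \std(\conv(w))$ and recalling $\mathfrak{G}_w := \sinv \cdot \mathfrak{G}_u$ from \Cref{def:grothendieck-poly-of-word}, the goal is to exhibit a weight-preserving bijection between not-necessarily-reduced (bumpless) pipe dreams of $u$ whose tiles are confined to the first $k$ columns and not-necessarily-reduced (bumpless) pipe dreams of $w$, and then transport the Grothendieck expansion for $u$ through the $\sinv$-action.

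The first step is to upgrade \Cref{lem:rectangular-PD,lem:rectangular-BPD} to the non-reduced setting. For classical pipe dreams this is immediate: both ordinary and $K$-theoretic chute moves produce a new cross at position $(k+1,i)$ with $i \le j < j+1$, so the rightmost column containing a cross is non-increasing along any chute sequence; starting from $P_{\textrm{top}}$ (which satisfies the column bound by \Cref{lem:rectangular-PD}), every element of $\overline{\operatorname{PD}}(u)$ has all crosses within the first $k$ columns. The same argument applied with droop and $K$-theoretic droop moves gives the corresponding statement for $\overline{\operatorname{BPD}}(u)$, once one checks that $B_{\textrm{diagram}}$ has all blanks in the first $k$ columns. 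This last fact is a direct computation from \Cref{def:standardization}: if $(i,k+r)$ were a diagram entry of $u$ with $r \ge 1$, then $u_i > k+r$ would force $u_i = k+s$ for some $s > r$, and the monotonicity of the noninitial positions of $\conv(w)$ would then give $i = i_s > i_r = u^{-1}_{k+r}$, contradicting the diagram condition $u^{-1}_{k+r} > i$.

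With the rectangularity lemmas in place, the truncation-and-relabeling map of \Cref{def:PD-for-word,def:BPD-for-word} becomes a bijection from $\overline{\operatorname{PD}}(u)$ (resp.\ $\overline{\operatorname{BPD}}(u)$) onto the set of not-necessarily-reduced classical (resp.\ bumpless) pipe dreams of $w$; invertibility uses the fact that the discarded columns are uniquely determined by the permutation $u$ together with the truncated data. All quantities entering the $K$-theoretic weight -- the cardinality $\#P$, the sets $\operatorname{blank}(B)$ and $\operatorname{U}(B)$, and the length $\ell(u)$ -- are intrinsic to the truncated data, and row relabeling implements precisely the substitution $x_i \mapsto x_{\sinv(i)}$. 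Invoking \Cref{thm:pipe dreams} for the classical case and \Cref{thm:BPD-grothendieck} for the bumpless case to express $\mathfrak{G}_u$ as a weighted sum over $\overline{\operatorname{PD}}(u)$ (resp.\ $\overline{\operatorname{BPD}}(u)$), and pushing the identity through $\sinv$, yields the desired expression $\mathfrak{G}_w = \sum_P \operatorname{wt}_K(P; x)$ in both models.

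The subtlest step will be the non-reduced rectangularity for bumpless pipe dreams, since $K$-theoretic droop moves can convert previously existing SE-elbows into blank tiles, and nothing in the local definition of the move a priori prevents such an SE-elbow from sitting in a column beyond $k$. The resolution uses the structural constraint from $u = \std(\conv(w))$: the letters $k+1,\dots,k+n-m$ are forced to occupy consecutive noninitial positions, which in turn restricts where the SE-elbow configurations supporting a valid $K$-droop can occur and confines all such moves to the first $k$ columns. Once this geometric rigidity is established, the remaining argument is a mechanical transport of the permutation-level identity through the $\sinv$-action, in direct parallel with the Schubert case (\Cref{thm:word-schubert-pd}).
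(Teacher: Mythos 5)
Your proposal is correct and follows essentially the same route as the paper's proof: truncate the (bumpless) pipe dreams of $u=\std(\conv(w))$ to the first $k$ columns, relabel rows by $\sinv$, and transport the permutation-level expansions of \Cref{thm:pipe dreams} and \Cref{thm:BPD-grothendieck} through the identity $\mathfrak{G}_w=\sinv\cdot\mathfrak{G}_u$. The additional care you devote to non-reduced rectangularity under $K$-theoretic chute and droop moves is precisely the content the paper delegates to \Cref{lem:rectangular-PD} and \Cref{lem:rectangular-BPD} (which are stated for all, not only reduced, diagrams), so you are not doing anything genuinely different, only spelling out the one step -- confinement of the new blanks created by $K$-theoretic droops to the first $k$ columns -- that the paper itself leaves at the level of a sketch.
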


\begin{proof}
The argument is similar to the proof of \Cref{thm:word-schubert-pd}, using \Cref{thm:BPD-grothendieck} in place of \Cref{thm:pipe dreams}.  For the permutation $u=\std(\conv(w))$, non-reduced pipe dreams (or bumpless pipe dreams) are in weight-preserving bijection with the monomials of $\mathfrak{G}_u(x_1,\dots,x_n)$.  By the bumpless analogue of \Cref{lem:rectangular-PD} together with \Cref{def:BPD-for-word}, restriction to the first $k$ columns and relabeling by $\sinv$ preserves weights and induces a bijection between non-reduced pipe dreams for $u$ and for $w$.  On the polynomial side, we again have $\mathfrak{G}_w = \sinv\cdot \mathfrak{G}_u$, so the multisets of monomials agree.
\end{proof}

For Fubini words $w\in\Fubini$, the Schubert and Grothendieck polynomials $\mathfrak S_w$ and $\mathfrak G_w$ index the canonical bases of $H^\bullet(X_{n,k})$ and $K_0(X_{n,k})$.  Theorems~\ref{thm:word-schubert-pd} and~\ref{thm:word-grothendieck-bpd} therefore provide pipe-dream and bumpless-pipe-dream descriptions of the basis elements of these rings, extending the classical permutation story to the setting of spanning line configurations and the generalized coinvariant algebra.  It would be interesting to compare these word-level models with back-stable Schubert and Grothendieck polynomials in the sense of Lam--Lee--Shimozono \cite{LLS}, and to investigate whether similar stabilization and symmetry phenomena occur in this broader context.
\section{Open problems and further directions}
\label{sec:open-problems}

The results of this paper place the variety $X_{n,k}$ of spanning line configurations among the class of smooth cellular varieties whose cohomology, Chow ring, and Grothendieck group are all abstractly isomorphic to a combinatorially defined ring.  We conclude by recording several directions where further progress would be interesting.

Recall Question~\ref{qs:when-are-K0-CH-iso}, which asks for a characterization of schemes $X$ for which the rings $K_0(X)$ and $CH^\bullet(X)$ are abstractly isomorphic over $\ZZ$. Well-known examples include projective spaces, Grassmannians, and other partial flag varieties. In addition, Larson-Li-Payne-Proudfoot in \cite{LLPP24WonderfulVarieties} proved that the $K_0$-rings of \textit{wonderful varieties} in the sense of Concini-Procesi \cite{ConciniProcesi95Wonderful} are isomorphic to their Chow rings over $\ZZ$. Our main theorem shows that $X_{n,k}$ provides a new kind of example. It would be valuable to understand how far this phenomenon extends beyond all of the above examples.

\begin{question}[Integral $K_0$ versus Chow]
\label{qs:when-are-K0-CH-iso-open}
Find natural necessary and sufficient conditions on a scheme $X$ such that there exists an isomorphism of ungraded rings
\[
K_0(X) \;\cong\; CH^\bullet(X)
\]
over the integers.  In particular, is there a conceptual explanation for why projective spaces, Grassmannians, full flag varieties, and $X_{n,k}$ all satisfy this property, while certain smooth cellular varieties such as $\PP(\cO_{\PP^2}(2))$ do not?
\end{question}

In another direction, Chou--Matsumura--Rhoades \cite{chou2024equivariantcohomologygrassmannianspanning} gave a presentation of the $T$-equivariant cohomology of $X_{n,k}$, where $T$ is the natural torus acting on $(\PP^{k-1})^n$.  General principles suggest that the $T$-equivariant Chow ring of $X_{n,k}$ should agree with its equivariant cohomology, but the equivariant $K$-theory is subtler.

\begin{question}[Equivariant $K$-theory]
\label{qs:eq-K-theory}
Determine the structure of the $T$-equivariant $K$-theory $K^T_0(X_{n,k})$.  In particular, does $K^T_0(X_{n,k})$ admit a presentation analogous to that of the equivariant cohomology in \cite{chou2024equivariantcohomologygrassmannianspanning}, and to what extent is $K^T_0(X_{n,k})$ determined by $H^\bullet_T(X_{n,k})$?
\end{question}

Our pipe dream and bumpless pipe dream models for Fubini words extend many features of the classical permutation case.  One important aspect of the permutation story is the interpretation of both classical and bumpless pipe dreams as wiring diagrams for permutations.  It would be interesting to understand whether there is an equally natural “wiring” picture in the Fubini-word setting.

\begin{question}[Wiring diagrams for words]
\label{qs:wiring}
Is there a notion of wiring diagram for Fubini words that corresponds naturally to classical and/or bumpless pipe dreams of a word $w\in [k]^n$?  Can such diagrams be used to give more direct bijections with reduced decompositions or other combinatorial data associated to $w$?
\end{question}

Finally, Weigandt \cite{weigandt2025changingbasespipedream} obtained change-of-basis formulas for the coinvariant algebra $R_n$ between Schubert and Grothendieck bases, expressed in terms of pipe dream combinatorics.  Since $\Rnk$ admits both Schubert and Grothendieck bases indexed by Fubini words, one expects analogous phenomena in the generalized setting.

\begin{question}[Change of basis in $R_{n,k}$]
\label{qs:change-of-basis}
Are there combinatorial change-of-basis formulas between the Schubert and Grothendieck bases of the generalized coinvariant algebra $R_{n,k}$, expressed in terms of classical or bumpless pipe dreams for words?  Can such formulas be related to the geometry of $X_{n,k}$ and its Pawlowski--Rhoades strata?
\end{question}

We hope that the interplay between geometry, $K$-theory, and pipe dream combinatorics developed here will be a useful starting point for addressing these questions.


%
%
%

\newpage
\appendix

\section{Algorithms for Pawlowski-Rhoades Cells}
\label{apx:algorithms}

\subsection{Construction of the Pattern Matrix}
\begin{center}
    \begin{algorithm}[h!]
\DontPrintSemicolon
\SetKwInOut{Input}{Input}
\SetKwInOut{Output}{Output}

\caption{\textsc{Construction of} \(\PM(w)\). }
\label{alg:pattern-matrix}

\begin{multicols}{2}
\Input{A word \( w = w_1 w_2 \dots w_n \in \Words \)}
\Output{A \(k \times n\) matrix \(\PM(w)\) with entries in \(\{0,1,\star\}\)}

\BlankLine
\textbf{Initialize the matrix:}\\
\Indp
$\PM(w) \gets \big(0\big)_{k\times n}$
\Indm

\BlankLine
\textbf{Initialize a list of first occurrences of letters:}\\
\Indp
$F\gets \underset{k}{\underbrace{(0, \dots, 0)}}$
\For{$i = 1$ \KwTo $k$}{
\uIf{$w$ contains letter $i$}{
$F_i\gets $ position of first occurrence of $i$ in $w$
}
\Else{$F_i\gets\infty$}
}
\Indm

\BlankLine

\For{$j = 1$ \KwTo $n$}{
\textbf{Create pivot 1's:} \(\PM(w)_{w_j,j} \gets 1\) \\
\BlankLine
\textbf{Create other entries:}\\
\For{$i = 1$ \KwTo $k$}{
\uIf{$j\in \operatorname{in}(w)$, $i<w_j$, \textbf{and} $F_i<j$}{
    \(\PM(w)_{i,j} \gets \star\)
}
\uIf{$j\in \operatorname{re}(w) \textbf{ and } F_i<F_{j}$}{
                \(\PM(w)_{i,j} \gets \star\)
            }
       }
}

\BlankLine
\Return \(\PM(w)\)
\end{multicols}
\end{algorithm}
\vspace{-2em}
\end{center}

\subsection{The Reduction Algorithm}

Given $p = [\ell_\bullet]\in \Pnk$, let $d_i = \dim \ell_1 + \ell_2 +\cdots + \ell_i$ be the dimension of the span of the first $i$ lines in $\ell_\bullet$ for all $1\leq i\leq n$ and set $d_0 = 0$ by convention. The $n+1$-tuple $(d_0, d_1, \dots, d_n)$ satisfies $d_{i} = d_{i-1}$ or $d_{i-1}+1$ for all $1\leq i\leq n$. An index $i$ at which $d_i = d_{i-1}+1$ is called an \textit{initial position}.

\vspace{1em}

\noindent\hrule
\vspace{0.4ex}
\hrule
\medskip
\textbf{Algorithm 2:} \textsc{Reduction Algorithm} \label{alg:reduction-alg}.
\medskip
\hrule
\vspace{0.4ex}
\hrule
\medskip
\textbf{Input:} $k\times n$-matrix $m=\left(m_{i, j}\right)\in\Nonzerocol$
\medskip

\textbf{Output: } $(\operatorname{R}(m), \operatorname{word}(m))$ where $\operatorname{R}(m)$ is a $k\times n$ canonical coset representative for $UmT$ and $\operatorname{word}(m)\in \Words$.
\medskip

\textbf{Initialize:} Set $m^\prime\gets m$.

\begin{enumerate}[(1)]
    \item For column 1, let $i$ be the minimal row index such that $m^\prime_{i, 1} \neq 0$. By performing a minimal number of elementary row operations, one can find $u\in U$ such that all entries of $um^\prime$ below $(i, 1)$ are zero. The diagonal matrix $t = \operatorname{diag}(1, \dots, \frac{1}{m^\prime_{i,1}}, \dots, 1)\in T$ is such that $(um^\prime t)_{i,1} = 1$. Update $m^\prime \gets um^\prime t$. Set $w_1 \gets i$.
    \item For each subsequent column $2 \leq j \leq n$ of $m^\prime$: 
\begin{enumerate}
    \item If there exists a row index $i\in [k]-\{w_1, \dots, w_{j-1}\}$ for which $m^\prime_{i, j} \neq 0$, then choose the minimal such $i$. Update $u\in U$ as above such that all entries of $um^\prime$ below $(i, j)$ are zero. Update $t \gets \operatorname{diag}(1, \dots, \frac{1}{m^\prime_{i,j}}, \dots, 1)\in T$ such that $(um^\prime t)_{i,j} = 1$. Update $m^\prime \gets um^\prime t$. Set $w_j \gets i$.
    \item If no such $i$ exists, then let $\left(i_1< i_2< \ldots< i_r\right)$ be the initial letters of $w=w_1, \dots, w_{j-1}$. Choose maximal $i \in \left(i_1< i_2< \ldots< i_r\right)$ such that $m^\prime_{i, j} \neq 0$. Update 
    
    $t \gets \operatorname{diag}(1, \dots, \frac{1}{m^\prime_{i,j}}, \dots, 1)\in T$ so that $(m^\prime t)_{i, j}=1$. Update $m^\prime \gets m^\prime t$.  Set $w_j \gets i$.
\end{enumerate}
\end{enumerate}

\textbf{Return:} $(m^\prime, w)$.
\medskip
\hrule

\section*{Acknowledgments}
The author is grateful to Sara Billey and Jarod Alper for thought-provoking conversations. The author thanks Brendan Pawlowski and Brendon Rhoades for ideas and clarifications. The author thanks Elena Hafner and Anna Weigandt for helpful comments.

\bibliographystyle{amsalpha-ac}
\bibliography{sample_2025}
\end{document}